\definecolor{primaryblue2}{RGB}{84, 132, 169}
\definecolor{primaryblue}{RGB}{115, 155, 185}
\newcommand{\thref}[1]{\hyperref[#1]{Theorem~\ref*{#1}}}
\newcommand{\corref}[1]{\hyperref[#1]{Corollary~\ref*{#1}}}
\newcommand{\propref}[1]{\hyperref[#1]{Proposition~\ref*{#1}}}
\newcommand{\secref}[1]{\hyperref[#1]{Section~\ref*{#1}}}
\newcommand{\lemref}[1]{\hyperref[#1]{Lemma~\ref*{#1}}}
\newcommand{\rkref}[1]{\hyperref[#1]{Remark~\ref*{#1}}}
\newcommand{\defref}[1]{\hyperref[#1]{Definition~\ref*{#1}}}
\DeclareMathOperator{\Endo}{End}
\DeclareMathOperator{\Hess}{Hess}
\DeclareMathOperator{\proj}{proj}
\DeclareMathOperator{\tr}{tr}
\DeclareMathOperator{\vol}{Vol}
\DeclareMathOperator{\im}{Im}
\DeclareMathOperator{\w}{\omega}
\DeclareMathOperator{\projection}{proj}
\DeclareMathOperator{\spann}{Span}
\def \bui#1#2{\mathrel{\mathop{\kern 0pt#1}\limits^{#2}}}
\def \buil#1#2{\mathrel{\mathop{\kern 0pt#1}\limits_{#2}}}
\newtheorem{example}{Examples}[section]
\newtheorem{thm}{Theorem}[section]
\newtheorem{lemma}[thm]{Lemma}
\newtheorem{prop}[thm]{Proposition}
\newtheorem{cor}[thm]{Corollary}
\newtheorem{remark}[thm]{Remark}
\newtheorem{remarks}[thm]{Remarks}
\newtheorem{definition}[thm]{Definition}
\newtheorem{notation}[thm]{Notation}
\newtheorem{exabout:ample}[thm]{Example}
\begin{document}
\date{\normalsize \today}

\title{Biharmonic Steklov problems with Neumann boundary conditions and spectral inequalities on differential forms}
\author[1,2]{Rodolphe Abou Assali\thanks{\texttt{rodolphe.abou-assali@univ-lorraine.fr, rodolphe.abouassali.1@ul.edu.lb}}}

\affil[1]{{\footnotesize Lebanese University, Faculty of Sciences II, Department of Mathematics, P.O. Box 90656 Fanar-Matn, Lebanon}}
\affil[2]{\footnotesize Universit\'e de Lorraine, CNRS, IECL, F-54000 Nancy, France}

\newcommand{\myAbst}[1]{
    \begin{center}
        \begin{minipage}{0.7\textwidth}
        \textbf{Abstract}. {\small #1}
        \end{minipage}
    \end{center}
}

\maketitle 
\myAbst{%
We introduce a biharmonic Steklov problem with Neumann-type boundary conditions on differential forms and show that it is {well-posed}. We prove the existence of a discrete spectrum for this problem and {provide} associated variational characterizations of its eigenvalues. We establish eigenvalue estimates known as Kuttler-Sigillito inequalities, relating the eigenvalues of this problem to those of the Steklov, Dirichlet and Neumann problems, as well as the biharmonic Steklov problem with Dirichlet boundary conditions on differential forms.
}
\textit{Mathematics Subject Classification} (2020): 35A15, 35G15, 35J40, 35P15, 58C40, 58J32, 58J50

\textit{Keywords}: Riemannian manifolds with boundary, biharmonic Steklov operator, Neumann boun-\\dary conditions, discrete spectrum, eigenvalue estimates
\section{Introduction}
\subsection{Historical background}
Spectral theory is a vast field that intersects with several aspects of mathematics, such as analysis, geometry \cite{TopicsInSpectralGeometry}, and topology. The applications of spectral theory are closely related to physics. {The Dirichlet and Neumann problems are classical examples of spectral problems \cite{Chavel, taylor}, which allow one to study the properties of eigenfunctions and eigenvalues on compact Riemannian manifolds with boundary. Their applications are directly relevant to the study of vibration modes and the diffusion of sound, heat, and electromagnetic fields in domains with boundaries. In electromagnetism, time-harmonic Maxwell’s equations reduce to Helmholtz-type eigenvalue problems \cite{Maxwell2, Maxwell1}, where the spectrum determines resonant frequencies and modes of propagation. For instance, given a bounded domain—whether a vibrating drum, a thermal conductor, or an electromagnetic cavity—how does its geometry influence its vibrations, the evolution of temperature, or the distribution of energy within it \cite{EntendreLaFormeDuTambour1,EntendreLaFormeDuTambour2}? }
Mathematically, this corresponds to studying the spectrum of a differential operator, which in our case is the Laplacian under certain boundary conditions. The Steklov problem is another example of {spectral problem}, introduced by the Russian mathematician Vladimir Andreevich Steklov in a talk given in 1895, see \cite{SteklovLegacy}.

 {Problems where the eigenvalues appears in the boundary conditions are generally called Steklov problems \cite{Kuttler&SigillitoLivre} from their initial appearance in \cite{Steklov}. There are distinct biharmonic Steklov problems depending on the boundary conditions. In this paper we work on biharmonic Steklov operators with Dirichlet and Neumann boundary conditions. The study of these differential operators originates from the theory of elastic oscillating membranes and was mathematically examined on planar domains by J. Kuttler and V. Sigillito \cite{Kuttler1, Kuttler&Sigillito} and L.E. Payne \cite{Payne}. The latter gave some isoperimetric inequalities for the first eigenvalue and eigenvalue estimates were given in \cite{Liu1, Liu2}. The problem also appeared with D. Bucur, A. Ferrero, and F. Gazzola \cite{BucurFerreroGazzola}, where some results about the first Steklov eigenvalue of the biharmonic operator on bounded domains are given. The Robin boundary eigenvalue problem for the bi-Laplacian was introduced by D. Buoso and J. B. Kennedy \cite{BuosoKennedy}. Another biharmonic Steklov problem with different boundary conditions has recently been defined by D. Buoso and L. Provenzano \cite{BuosoProvenzano1, BuosoProvenzano2}. Related eigenvalue problems have also been studied in the literature, see, e.g., \cite{BerchioGazzolaMitidieri, BerchioGazzolaWeth, BucurGazzola, Buoso2, Buoso1, DerbissalyLamberti,GazzolaGrunauSweers, GazzolaSweers, Provenzano}.\\ Throughout this paper, we consider a compact Riemannian manifold $(M^n,g)$ with a smooth boundary $\partial M$ and let $\nu$ be the inward unit normal along the boundary. {We restrict our study to dimension $n \geq 2$, as the spectrum is finite for $n = 1$, see \rkref{rk:spectreBSNFct} below and \cite{FerreroGazzolaWeth}.} We recall the scalar Laplacian for all functions $u$ on a Riemannian manifold which is defined by $\Delta u=-\tr (\Hess(u))$. The bi-Laplacian is defined as the square of the Laplacian. We define the BSD operator denoted by $BSD^{func}$, as follows:
\begin{align*}
BSD^{func}:L^2(\partial M) &\longrightarrow L^2(\partial M)\\
f &\longmapsto \Delta u
\end{align*}
where $u$ is related to $f$ by: $\Delta^2 u = 0$ on $M$, $u = 0$ on $\partial M$, and $\partial_\nu u = f$ on $\partial M$. For the existence of such $u$ see for instance  \thref{theo:thm5.4} below. The associated scalar biharmonic Steklov eigenvalue problem with Dirichlet boundary conditions, is given by: 
\begin{equation}\label{eq:IntroBSD}
\begin{cases}
    \Delta^2 u = 0 & \text{on}\ M \\
    u = 0 &  \text{on } \partial M\\
    \Delta u - q \partial_\nu u = 0 &  \text{on } \partial M,
\end{cases}
\end{equation}where $u$ is a smooth function on $M$ and the real number $q$ is the eigenvalue of $BSD^{func}$. The whole spectrum of this problem was studied in \cite{FerreroGazzolaWeth} where one can also find a physical interpretation of the first eigenvalue and of the Steklov boundary conditions. Its extremal eigenvalues on rectangles with prescribed area have been studied by D. Buoso and P. Freitas \cite{BuosoFreitas}. The isoperimetric properties of the
first eigenvalue of above problem were studied in \cite{Payne}. Similarly we define the BSN operator denoted by $BSN^{func}$, as follows: 
    \begin{align*}
BSN^{func}:L^2(\partial M) &\longrightarrow L^2(\partial M)\nonumber\\
f&\longmapsto -\partial_\nu \Delta u
\end{align*}such that $u$ is related to $f$ by: $\Delta^2 u = 0$ on $M$, $\partial_\nu u = 0$ on $\partial M$, and $u = f$ on $\partial M$.
For the existence of such $u$ see for instance \thref{theo:thm5.4} below. Then, the associated scalar biharmonic Steklov eigenvalue problem with Neumann boundary conditions, is given by:
\begin{equation}\label{eq:IntroBSN}
\begin{cases}
    \Delta^2 u = 0 & \text{on}\ M \\
   \partial_\nu u  = 0 &  \text{on } \partial M\\
    \partial_\nu (\Delta u) - \ell u  = 0 &  \text{on } \partial M,
\end{cases}
\end{equation}
where $u$ is a smooth function on $M$ and the real number $\ell$ is the eigenvalue of $BSN^{func}$.} In \cite{BuosoFalcoGonzalezMiranda}, D. Buoso, C. Falcó, M. del Mar González, and M. Miranda studied the bulk-boundary eigenvalue problem for the bi-Laplacian, which in the extremal case reduces to \eqref{eq:IntroBSN}.
{It is well-known \cite{Kuttler1, Kuttler&Sigillito} that on planar domains, problems \eqref{eq:IntroBSD} and \eqref{eq:IntroBSN} admit a discrete spectrum consisting of real eigenvalues of finite multiplicities.} These problems were more recently studied on higher-dimensional Euclidean domains in \cite{FerreroGazzolaWeth}. We recall that by S. Raulot and A. Savo \cite{RaulotSavo}, a compact Riemannian manifold $M$ with a smooth nonempty boundary $\partial M$ is called a harmonic domain if and only if it admits a solution to the Serrin boundary value problem \cite{Serrin}. It follows from \cite{Serrin, Weinberger} that the only harmonic domains in $\mathbb{R}^n$ are Euclidean balls. Moreover by \cite{BSFidaGeorgeOlaNicolas}, any solution to Serrin’s problem is an eigenfunction associated with the scalar BSD \eqref{eq:IntroBSD}.
More recently, A. Hassannezhad and A. Siffert \cite{hassannezhadETsiffert} generalized some of the estimates of \cite{Kuttler1, Kuttler&Sigillito} for both scalar biharmonic Steklov operators on arbitrary Riemannian manifolds and showed that other quantities appear in their estimates that depend on the curvature of the manifold. In the same context, S. Kouzayha established in her PhD thesis \cite{KouzayhaThese} an inequality that links the first eigenvalue of the scalar biharmonic Steklov-Dirichlet problem with those of the Dirichlet and Robin Laplacians.

In a recent work \cite{BSFidaGeorgeOlaNicolas}, the biharmonic Steklov problem with Dirichlet boundary conditions \eqref{eq:IntroBSD} is generalized to differential forms of arbitrary degree, see \eqref{eq:BSDF1}. Appropriate boundary conditions are considered to make the corresponding problem well-posed. The smallest eigenvalue is characterized, and various estimates for this eigenvalue are obtained, some of which involve eigenvalues for other problems such as Dirichlet, Neumann, Robin, and Steklov. Independently, other inequalities concerning the eigenvalues of these latter problems are established.

In this work we extend the biharmonic Steklov problem with Neumann boundary condition \eqref{eq:IntroBSN} to differential forms of arbitrary degree. {This extension is not unique, there are different possible problems, all coinciding with the usual one for functions, which are well-posed and with discrete spectrum.} We provide the variational characterization of the eigenvalues for three possible choices of such extensions. This allows us to generalize Kuttler-Sigillito spectral inequalities, which relate eigenvalues of different problems on a bounded planar domain \cite{Kuttler&Sigillito}. Some of these inequalities on functions were also established by A. Hassannezhad and A. Siffert on compact Riemannian manifolds with $C^2$ boundary. 

From now on, we refer to the Biharmonic Steklov problem with Dirichlet boundary condition as BSD and the Biharmonic Steklov problem with Neumann boundary condition as BSN. {We note that we always index the eigenvalues of non-negative operators so that $k=1$ corresponds to the first positive eigenvalue.} We fix the degree $p\in \{0,\cdots,n-1\}$, see \lemref{lem:p=n}. 

\subsection{Kuttler-Sigillito inequalities}
{The study of spectral geometry often relies on understanding how the shape and connectedness of a manifold dictate its fundamental frequencies. Having established the variational frameworks for the BSD and BSN problems, a natural question arises: how do these specific “mixed"  spectra relate to the classical Dirichlet, Neumann, and Steklov eigenvalues? These relationships are formalized through the Kuttler-Sigillito inequalities. Originally introduced for} {bounded domains in $\mathbb{R}^2$} { \cite{Kuttler&Sigillito}, these inequalities provide a quantitative bridge between different physical models, for instance, relating how a membrane vibrates when its boundary is fixed versus when it is free or concentrates its mass along the boundary. Recently, Hassannezhad and Siffert \cite{hassannezhadETsiffert} extended parts of this theory to the broader context of Riemannian manifolds.
In the following, we recall these foundational results, which we generalize to the more complex setting of differential forms. Similar inequalities have appeared in the literature, see, e.g., \cite{Berge, BuosoChasmanProvenzano, BuosoLuzziniProvenzanoStubbe, Kuttler3, Kuttler&Sigillito2, Kuttler&SigillitoLivre, Verma, Weinstock}.  Let $\lambda_k, \mu_k, \sigma_k, q_k,$ and $\ell_k$ denote the $k^{th}$ eigenvalues of the Dirichlet, Neumann, Steklov, BSD, and BSN problems, respectively (see the problems below).\\
The following inequalities, linking the BSN spectrum to the product of the Neumann and Steklov eigenvalues, were first established for bounded domains in $\mathbb{R}^2$ (see \cite[Table 1]{Kuttler&Sigillito}) in the specific case $k=1$. They were more recently generalized to compact Riemannian manifolds of dimension greater than $2$ with $C^2$ boundary in \cite[Theorem 1.1]{hassannezhadETsiffert} :
\begin{align}
\label{eq:firstineq}
\mu_k \sigma_1 \leq \ell_k \quad \text{and} \quad \mu_1 \sigma_k \leq \ell_k, \quad \text{for all } k \geq 1.
\end{align}}%
{For bounded domains of the plane with piecewise $C^1$ boundary, these inequalities from \cite[Table 1]{Kuttler&Sigillito} connect the classical Dirichlet, Neumann and Steklov fundamental modes to the BSD and BSN problems:
\begin{equation} \label{eq:ineqKuttlerSigillito}
\begin{aligned}
    \bullet \quad & q_1 \sigma_1^2 \leq \ell_1, \\
    \bullet \quad & \mu_1^{-1} \leq \lambda_1^{-1} + (q_1 \ell_1)^{-\frac{1}{2}}, \\
    \bullet \quad & \mu_1^{-1} \leq \lambda_1^{-1} + (q_1 \sigma_1)^{-1}.
\end{aligned}
\end{equation}}

\subsection{BSD on differential forms}
{We have seen above the BSD problem \eqref{eq:IntroBSD}, defined for functions on a compact Riemannian manifold with smooth boundary. This problem has been generalized for differential forms of arbitrary degree by F. El Chami, N. Ginoux, G. Habib, and O. Makhoul in \cite{BSFidaGeorgeOlaNicolas}. Motivated by the extension of Serrin problem \cite{Serrin} to differential forms, they defined the {boundary-value problem} \eqref{eq:BSDF1}, showed the existence of a discrete spectrum and provided classical and alternative characterizations for the first eigenvalue of this problem. In this context, we consider a compact Riemannian manifold $(M^n,g)$ of dimension $n\geq 2$ with smooth boundary $\partial M$. We denote by: $\nu$ the inward unit normal along the boundary $\partial M$, $d$ the exterior differential, $\delta$ the codifferential, $\iota: \partial M \to M$ the inclusion map, and $\lrcorner$ the interior product. We write $\Delta=d\delta+\delta d$ the Hodge-de Rham Laplacian on differential forms (see  \secref{sec:rappelGeometrie}). By \cite[Theorem 2.2]{BSFidaGeorgeOlaNicolas} the {boundary-value problem} \begin{equation}\label{eq:BSDF1}
(BSD)\begin{cases}
    \Delta^2 \w = 0 & \text{on }\ M \\
     \w = 0 &  \text{on  } \partial M\\
    \nu \lrcorner \Delta \w + q \iota^*  \delta\w = 0 & \text{on  } \partial M \\
     \iota^*  \Delta \w- q\nu \lrcorner d \w   = 0 & \text{on  } \partial M,
\end{cases}
\end{equation} on $p$-forms, has a discrete spectrum consisting of an unbounded monotonously non-decreasing sequence of positive eigenvalues of finite multiplicities $(q_{j,p})_{j\geq1}.$ In \cite[Theorem 2.6]{BSFidaGeorgeOlaNicolas}, the authors provided characterizations of the first eigenvalue $q_{1,p}$, which we use to generalize Kuttler-Sigillito inequalities:
\begin{eqnarray}
\label{eq:q_1Standard}q_{1,p}&=&\inf\left\{\frac{\|\Delta\w\|_{L^2(M)}^2}{
\|\nu\lrcorner d\w\|_{L^2(\partial
M)}^2+\|\iota^*\delta\w\|_{L^2(\partial
M)}^2}\,|\,\w\in\Omega^p(M),\,\w_{|_{\partial
M}}=0\ \text{and}\ \nabla_\nu\w\neq 0\right\}\\
\label{eq:q_1Alternative}&=&\inf\left\{\frac{\|\w\|_{L^2(\partial M)}^2}{
\|\w\|_{L^2(
M)}^2}\,|\,\w\in\Omega^p(M)\setminus\{0\},\,\;\Delta\w=0\textrm{ on
}M\right\}.
\end{eqnarray}
 Both infima are indeed minima, where the infimum in \eqref{eq:q_1Standard} is attained by a $q_{1,p}$-eigenform for {BSD} \eqref{eq:BSDF1} and \eqref{eq:q_1Alternative}
is attained by $\Delta\w$, where $\w$ is a $q_{1,p}$-eigenform for {BSD} \eqref{eq:BSDF1}.}

\subsection{BSN on differential forms}
To establish inequalities between the eigenvalues for differential forms, as it was done in the scalar case, we need to find a suitable BSN problem for differential forms, inspired by the work done in \cite{BSFidaGeorgeOlaNicolas, BucklingFida}. Let us first define the space that is the kernel of several {boundary-value problems} we are interested in.
\begin{definition}
    The absolute de Rham cohomology of degree $p$ is the finite dimensional space \begin{equation}\label{eq:Hap}
        H_A^p(M) = \left\{ \w \in \Omega^p(M) \mid d\w = 0 \text{ on } M, \ \delta\w = 0 \text{ on } M, \ \nu \lrcorner \w = 0 \text{ on } \partial M \right\}.
 \end{equation}We refer to \cite[Section 2]{RaulotSavo} for more details.

\end{definition}
Our first main result is the following:

\begin{thm}\label{thm:TheoremeGeneralBSNF3} The following {boundary-value problem}
\begin{equation}\label{eq:BSNF3}
(BSN)\begin{cases}
\Delta^2 \w = 0 & \text{on } M \\
\nu \lrcorner \w = 0 & \text{on } \partial M \\
\nu \lrcorner d \w = 0 & \text{on } \partial M \\
\nu \lrcorner \Delta \w = 0 & \text{on } \partial M \\
\nu \lrcorner \Delta d\w +  \ell \iota^* \w = 0 & \text{on } \partial M,
\end{cases}
\end{equation}
for differential p-forms, has a discrete spectrum consisting of an unbounded non-decreasing sequence of positive real eigenvalues with finite multiplicities $( \ell_{i,p})_{i \geq 1}$ and possibly $\ell_{0,p}=0$. Moreover, its kernel is $H_A^p(M).$

\end{thm}
We provide below the spectrum of the BSD and BSN operators on an interval. The following remark explains the reason for restricting our study to $n\geq 2$.
\begin{remark}\label{rk:spectreBSNFct}
  The spectrum of BSN on functions $f:[0,1]\longrightarrow \mathbb{R}$ on the interval $M=[0,1]$ is reduced to $\{0,24\}$.
\end{remark}
\begin{proof}
We get the spectrum by simply solving the system \begin{equation*}
\begin{cases}
f^{(4)} = 0 & \text{on } M \\
f'(0) = 0 & \text{on } \partial M \\
f'(1)= 0 & \text{on } \partial M \\
-f^{(3)}(0)+\ell f(0)= 0 & \text{on } \partial M \\
f^{(3)}(1)+\ell f(1) = 0 & \text{on } \partial M.
\end{cases}
\end{equation*}
\end{proof}
The spectrum of BSD on functions \eqref{eq:IntroBSD} on an interval $[-1,1]$ is also reduced to $\{1,3\}$, see \cite[Remark 1.4]{FerreroGazzolaWeth}.

\subsection{Kuttler-Sigillito inequalities for differential forms}\label{sec:IntroKuttlerSigillito}
Now that we have the BSN problem, we can establish the Kuttler-Sigillito inequalities. {We first state the inequalities and prove them later on (see \secref{sec:kuttler-sigillito}). Let $\lambda_{k,p},\ \mu_{k,p}$ and $\sigma_{k,p}$ denote respectively, the $k^{th}$ positive eigenvalue of the Dirichlet Laplacian \eqref{eq:dirichletFormes},  Neumann Laplacian \eqref{eq:neumannFormesAbsolue} and Steklov operator \eqref{eq:steklovFormes}, on differential forms (see \secref{sec:DNSformes})}.  For BSN, we will show the following results on differential forms:

\begin{thm}\label{thm:3èmePBkuttlersigilitto1}
For $k\geq 1$, we have the following inequalities between the eigenvalues of the Steklov, Dirichlet, Neumann, BSD and BSN problems:
    \begin{equation}\label{eq:3èmePBmuksigma1<lk}
    \mu_{k,p} \sigma_{1,p} \leq \ell_{k,p},
    \end{equation}
    \begin{equation}\label{eq:3èmePBmu1sigmak<lk}
    \mu_{1,p} \sigma_{k,p} \leq \ell_{k,p}.
    \end{equation}
    The inequality is strict for $k=1,$ i.e., $ \mu_{1,p} \sigma_{1,p} < \ell_{1,p}.$
    \begin{equation}\label{eq:q1sigma1<l1}
    q_{1,p} \sigma_{1,p}^2 <\ell_{1,p}.
    \end{equation}
    \begin{equation}\label{eq:mu1<lamba1+(q1l1)}
    \mu_{1,p}^{-1} < \lambda_{1,p}^{-1} + (q_{1,p} \ell_{1,p})^{-\frac{1}{2}}.
    \end{equation} 
    \begin{equation}\label{eq:mu1<lamba1+(q1sigma1)}
    \mu_{1,p}^{-1} < \lambda_{1,p}^{-1} + (q_{1,p} \sigma_{1,p})^{-1}.
    \end{equation}
\end{thm}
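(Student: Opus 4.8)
The plan is to establish each of the five inequalities by constructing an appropriate test form for the variational characterization of the eigenvalue on the left-hand side, and then using the eigenform of the right-hand problem (or a related auxiliary problem) as input. Throughout, I would rely on the variational characterizations of all the problems involved — Dirichlet $\lambda_k$, Neumann $\mu_k$, Steklov $\sigma_k$, BSD $q_k$ (from \eqref{eq:q_1Standard}--\eqref{eq:q_1Alternative}), and the BSN2/BSN3 characterizations which I assume have been obtained earlier in the paper — together with the integration-by-parts (Green-type) formulas relating $\|\Delta\w\|_{L^2(M)}$, boundary terms like $\|\iota^*\w\|_{L^2(\partial M)}$, $\|\nu\lrcorner d\w\|_{L^2(\partial M)}$, and the interior norms. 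The rightmost inequality $\textit{l}_k\le\mathbf{l}_k$ in \eqref{eq:3èmePBmuksigma1<lk}--\eqref{eq:q1sigma1<l1} should follow directly from the preceding lemma ($\ell_{k,p}\le\textit{l}_{k,p}\le\mathbf{l}_{k,p}$), so the real content is the leftmost estimate in each line.

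\textbf{Inequalities \eqref{eq:3èmePBmuksigma1<lk} and \eqref{eq:3èmePBmu1sigmak<lk}.} These are the exact form-valued analogues of \thref{thm:firstineq}. I would mimic the Hassannezhad--Siffert argument: take a BSN3 eigenform $\w$ associated with $\textit{l}_k$, whose trace $\iota^*\w$ lies in a $k$-dimensional spectral subspace suitable for testing the Neumann (resp. Steklov) quotient, and estimate. Concretely, the min-max for $\textit{l}_k$ (a quotient of the form $\|\Delta\w\|^2_{L^2(M)}$ over a boundary $L^2$ norm of $\iota^*\w$) is bounded below by first applying the Steklov characterization of $\sigma_1$ to pass from the boundary norm to an interior norm of a harmonic extension, then applying the Neumann characterization of $\mu_k$ (using the $k$-dimensionality of the test space to get the $k$-th eigenvalue rather than the first). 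Swapping the roles of which problem supplies the index $k$ gives the companion inequality. The strictness for $k=1$ should come from the fact that equality in the chained estimates would force $\w$ to be simultaneously an extremal form for both auxiliary problems, which is incompatible unless $\w$ is harmonic (hence in the kernel), contradicting $\textit{l}_1>0$.

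\textbf{Inequalities \eqref{eq:q1sigma1<l1}, \eqref{eq:mu1<lamba1+(q1l1)}, and \eqref{eq:mu1<lamba1+(q1sigma1)}.} These are the form analogues of parts (i), (ii), (iii) of \thref{thm:ineqKuttlerSigillito}. For \eqref{eq:q1sigma1<l1} I would test the BSN3 quotient for $\textit{l}_1$ with a form built from a BSD eigenform and a Steklov eigenform, chaining $q_1$ and two factors of $\sigma_1$ (the square on $\sigma_1$ arising because the boundary data must be transported across the boundary twice). For the two reciprocal inequalities \eqref{eq:mu1<lamba1+(q1l1)} and \eqref{eq:mu1<lamba1+(q1sigma1)}, the classical Kuttler--Sigillito trick is to decompose a Neumann test form into a harmonic part plus a correction vanishing on the boundary, writing $\mu_1^{-1}$ as a supremum and splitting the relevant Rayleigh quotient into a Dirichlet contribution ($\lambda_1^{-1}$) and a boundary/biharmonic contribution controlled by $(q_1\textit{l}_1)^{-1/2}$ or $(q_1\sigma_1)^{-1}$ via Cauchy--Schwarz. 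I would carry this out on forms using the absolute boundary conditions $\nu\lrcorner\w=0$ that define $H_A^p(M)$ to ensure the decomposition is orthogonal in the correct sense.

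\textbf{Main obstacle.} The hardest part will be \eqref{eq:mu1<lamba1+(q1l1)} and \eqref{eq:mu1<lamba1+(q1sigma1)}: the reciprocal structure forces one to split a single Rayleigh quotient additively, and on differential forms the integration-by-parts identities generate extra boundary terms (involving $\iota^*\delta\w$, $\nu\lrcorner d\w$, and curvature contributions through Bochner/Reilly-type formulas) that do not appear in the scalar case and must be shown to vanish or to have a favorable sign using precisely the BSN3 boundary conditions $\nu\lrcorner\w=\nu\lrcorner d\w=\nu\lrcorner\Delta\w=0$. Verifying that the decomposition respects these boundary conditions, and that the Cauchy--Schwarz step produces exactly the geometric mean $(q_1\textit{l}_1)^{-1/2}$ rather than a weaker bound, is where the argument is most delicate; the strictness of the inequalities should again follow by tracking the equality case in Cauchy--Schwarz and ruling it out via the positivity of $\textit{l}_1$ and $\mu_1$.
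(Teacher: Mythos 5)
Your outline matches the paper's strategy for \eqref{eq:3èmePBmuksigma1<lk} and \eqref{eq:3èmePBmu1sigmak<lk} (test the Neumann and Steklov min--max quotients on the span of the first $k$ BSN3 eigenforms, augmented by $H_A^p(M)$, and telescope the two Rayleigh quotients; strictness for $k=1$ by ruling out a common eigenform), and also for \eqref{eq:mu1<lamba1+(q1l1)} (decompose $\w_1=u+v$ with $\Delta u=0$ and $v_{|\partial M}=0$, then chain $\lVert u\rVert_{L^2(M)}\le q_1^{-1/2}\lVert\w_1\rVert_{L^2(\partial M)}\le (q_1\textit{l}_1)^{-1/2}\lVert\Delta\w_1\rVert_{L^2(M)}$ and $\lVert v\rVert_{L^2(M)}\le\lambda_1^{-1}\lVert\Delta\w_1\rVert_{L^2(M)}$ via the product of \eqref{eq:vpKDirichletForme} and \eqref{eq:vpKAlternativeDirichletForme}). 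Your worry about Bochner/Reilly curvature terms in the last step is unfounded: the paper's argument is a pure chaining of variational characterizations plus a triangle inequality in $L^2(M)$, and the only form-specific care needed is splitting off the $H_A^p(M)$ component of the Neumann eigenform, for which $(\w_1,\w_0)_{L^2(M)}=-\lVert\w_0\rVert^2_{L^2(M)}$ gives $\lVert\w\rVert_{L^2(M)}\le\lVert\w_1\rVert_{L^2(M)}$. Also, \eqref{eq:mu1<lamba1+(q1sigma1)} does not need a separate decomposition argument: it is an immediate corollary of \eqref{eq:q1sigma1<l1} and \eqref{eq:mu1<lamba1+(q1l1)}, since $q_1\sigma_1^2<\textit{l}_1$ gives $(q_1\sigma_1)^{-1}>(q_1\textit{l}_1)^{-1/2}$.

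The genuine gap is in your plan for \eqref{eq:q1sigma1<l1}. You propose to ``test the BSN3 quotient for $\textit{l}_1$ with a form built from a BSD eigenform and a Steklov eigenform,'' but plugging a test form into the infimum characterizing $\textit{l}_1$ produces an \emph{upper} bound on $\textit{l}_1$, whereas $q_1\sigma_1^2<\textit{l}_1$ requires a \emph{lower} bound. The correct mechanism, and the one the paper uses, is to express all three quantities as infima over the \emph{same} class of harmonic forms and use that a product of infima is at most the infimum of the product: $q_1$ via the dual characterization \eqref{eq:q_1Alternative}, $\sigma_1$ once via \eqref{eq:sigma1formesSteklov} and once via the dual characterization \eqref{eq:sigma1formeOrthHAP} (this is where the square comes from, not from ``transporting boundary data twice''), and crucially $\textit{l}_1$ via the dual characterization $\tilde{\textit{l}}_{1,p}$ of \eqref{eq:lTilde<l} as an infimum of $\lVert\nu\lrcorner d\w\rVert^2_{L^2(\partial M)}/\lVert\w\rVert^2_{L^2(M)}$ over harmonic forms with $\nu\lrcorner\w=0$. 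Establishing $\tilde{\textit{l}}_{1,p}=\textit{l}_{1,p}$ (\thref{thm:l'minimumtheorem}) is a substantial piece of work that your plan does not identify at all, and without it the three quotients cannot be chained in the required direction.
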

{Recall that inequalities \eqref{eq:3èmePBmuksigma1<lk} and \eqref{eq:3èmePBmu1sigmak<lk} were already established for $k=1$ in a bounded domain of $\mathbb{R}^2$ \cite{Kuttler&Sigillito}, and subsequently generalized to the compact Riemannian manifold setting for functions \cite{hassannezhadETsiffert}. Meanwhile, Inequalities \eqref{eq:q1sigma1<l1}, \eqref{eq:mu1<lamba1+(q1l1)}, and \eqref{eq:mu1<lamba1+(q1sigma1)} have only been proven for a bounded domain of $\mathbb{R}^2$ \cite{Kuttler&Sigillito} in the large sense.}

\begin{remark}
    Kuttler and Sigillito \cite{Kuttler&Sigillito} established other spectral inequalities that involve geometric controls of the boundary. These were partially extended to scalar functions on manifolds in \cite{hassannezhadETsiffert}. We study such inequalities in a forthcoming paper.
\end{remark}

\textbf{Outline of the Article:}
In \secref{sec:preliminaires} of this article, we provide a review of concepts related to differential forms, elliptic operators, and general theorem for the existence of solutions of {boundary-value problems}. We also recall spectral problems involving differential forms, particularly the Dirichlet, Neumann, Steklov, and biharmonic Steklov problems, while presenting their variational characterizations of eigenvalues.
In \secref{sec:BSN} we study the spectra of BSN for differential forms and mention other BSN problems with different boundary conditions. Finally, in \secref{sec:kuttler-sigillito}, we establish generalizations of the Kuttler–Sigillito inequalities.

\textbf{Acknowledgment:} I would like to express my sincere gratitude to my PhD supervisors, Nicolas Ginoux, Georges Habib and Samuel Tapie, for their invaluable help, guidance, support and ideas since the beginning of my PhD. I am deeply grateful for the time they have dedicated to me throughout this journey. I would also thank the referees for their valuable comments and suggestions, which helped improving this article. I acknowledge the IECL and the SAFAR scholarship for funding my PhD and for making this collaboration possible, along with everyone else who contributed to it. Finally, I also thank the support of the Grant ANR-24-CE40-0702 ORBISCAR.

\section{Preliminaries}\label{sec:preliminaires}
In this section, we provide a review of some geometric and analytic tools that we use in the paper.

\subsection{Differential forms}\label{sec:rappelGeometrie}
We give some definitions and results without proofs, some of which are from \cite[Chapter 5]{Lafontaine}, \cite[Chapter 1]{GunterSchwartz} and \cite[Chapter 1, Section 13]{taylor}.

 Let $M$ be a smooth compact manifold of dimension $n\geq2$ with smooth boundary. A differential form of degree $p$ on $M$ is a smooth section of the bundle $\Lambda^pT^*M$. The set of $p$-differential forms on $M$ is denoted $\Omega^p(M)$. For a differential form $\alpha \in \Omega^p(M)$, we denote by $\phi^* \alpha$ its pullback by a diffeomorphism $\phi$, and by $X \lrcorner \alpha$ its interior product by a vector field $X$. We denote by $\iota : \partial M \longrightarrow M$ the inclusion map. Thus, for any $\w \in \Omega^p(M)$, $\iota^*\w$ is the tangential component of $\w_{|\partial M}$, and $\nu \lrcorner \w$ is its normal component. \\ For all $x\in M$ we denote by $\langle.,.\rangle$ the natural inner product on $\Lambda^pT_x^*M$ induced by the Riemannian metric $g_x$, and by $d\mu_g$ the Riemannian volume form. We define then $(\w,\w'):=\int_M\langle\w,\w'\rangle d\mu_g.$
    The codifferential operator, denoted $\delta$, is a map $\delta : \Omega^p(M) \longrightarrow \Omega^{p-1}(M)$. It is the formal $L^2$-adjoint of the exterior derivative $d$, i.e., for all $\alpha\in \Omega^p(M)$ and $\beta\in \Omega^{p+1}(M)$ with support in $M\setminus \partial M$, we have  $(d\alpha, \beta) = (\alpha, \delta\beta).$\\
 For a vector field $X$, we write $X^\flat$ for the differential $1$-form such that for all vector field $Y$ we have $X^\flat(Y)=g(X,Y).$ We denote by $\nabla$ the Levi-Civita connection. Given a local orthonormal basis  $(e_i)_{i=1}^n$, we have the local expressions for $d\w=\sum_i e_i^\flat\wedge\nabla_{e_i}\w$ and $\delta\w=-\sum_ie_i\lrcorner\nabla_{e_i}\w$, see for instance \cite[Proposition 2.61]{GallotHulinLafontaine}.\\ The Hodge-de Rham Laplacian, the operator denoted $\Delta$ from  $\Omega^p(M)$ to itself defined for all $\w\in \Omega^p(M)$ by $\Delta \w = d\delta\w + \delta d\w,$ is formally self-adjoint, i.e., for all $\alpha, \ \beta\in \Omega^p(M)$ with support in $M\setminus \partial M$, we have  $(\Delta\alpha, \beta) = (\alpha, \Delta\beta).$ A $p$-differential form $\w$ is said to be harmonic if $\Delta \w = 0.$\\
  
We now present the following integration by parts formulas that we frequently use:
\begin{prop}\label{prop:IPP}
    For $\w \in \Omega^p(M)$ and $\w' \in \Omega^{p+1}(M)$, we have 
    \begin{equation}\label{eq:ippPourLaPreuve}
        \int_M \langle d\w, \w' \rangle d\mu_g = \int_M \langle \w, \delta\w' \rangle d\mu_g -\int_{\partial M} \langle \iota^* \w, \nu \lrcorner \w' \rangle d\mu_g.
    \end{equation}

    Moreover, for any $\w, \w' \in \Omega^p(M)$, we have 
    \begin{equation}\label{eq:IPP1}
    \begin{split}
        & \int_M \Big( \langle \Delta \w , \w' \rangle -  \langle \w , \Delta \w' \rangle \Big) d\mu_g \\
        = & \int_{\partial M} \Big( \langle \nu \lrcorner d \w, \iota^* \w' \rangle - \langle \iota^* \w, \nu \lrcorner d \w' \rangle + \langle \nu \lrcorner \w, \iota^* \delta \w' \rangle - \langle \iota^* \delta \w, \nu \lrcorner \w' \rangle \Big) d\mu_g.
    \end{split}
    \end{equation}
   
Eventually, for all $\w,\w'\in \Omega^p(M)$ we have the following identity
    \begin{equation}\label{eq:IPP3'}
    \begin{aligned}
    \int_M \langle \Delta \w, \w' \rangle \, d\mu_g &= \int_M \langle d\w, d\w' \rangle \, d\mu_g + \int_M \langle \delta \w, \delta \w' \rangle \, d\mu_g \\
    &\quad + \int_{\partial M} \langle \nu \lrcorner d\w, \iota^* \w' \rangle \, d\mu_g - \int_{\partial M} \langle \iota^* \delta \w, \nu \lrcorner \w' \rangle \, d\mu_g.
    \end{aligned}
    \end{equation}

\end{prop}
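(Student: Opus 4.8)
The plan is to notice that all three identities follow from the single Green-type formula \eqref{eq:ippPourLaPreuve}; once that is established, both \eqref{eq:IPP3'} and \eqref{eq:IPP1} are obtained by purely algebraic manipulations (expanding $\Delta=d\delta+\delta d$ and antisymmetrizing), using only the symmetry of the pointwise inner product. Thus the sole analytic ingredient is \eqref{eq:ippPourLaPreuve}, and I would prove that first.

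To prove \eqref{eq:ippPourLaPreuve}, I would argue pointwise with a local orthonormal frame $(e_i)$ of $TM$ that is parallel at the point under consideration, and use the recalled local expressions $d\w=\sum_i e_i^\flat\wedge\nabla_{e_i}\w$ and $\delta\w=-\sum_i e_i\lrcorner\nabla_{e_i}\w$ together with the fiberwise adjointness $\langle e_i^\flat\wedge\alpha,\beta\rangle=\langle\alpha,e_i\lrcorner\beta\rangle$. Introduce the vector field $X=\sum_i\langle\w,e_i\lrcorner\w'\rangle\,e_i$. At a point where $\nabla e_i=0$ one computes $\dive X=\sum_i\big(\langle\nabla_{e_i}\w,e_i\lrcorner\w'\rangle+\langle e_i^\flat\wedge\w,\nabla_{e_i}\w'\rangle\big)$, and the two sums are exactly $\langle d\w,\w'\rangle$ and $-\langle\w,\delta\w'\rangle$, so $\dive X=\langle d\w,\w'\rangle-\langle\w,\delta\w'\rangle$. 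Since $\nu$ is the \emph{inward} normal, the divergence theorem reads $\int_M\dive X\,d\mu_g=-\int_{\partial M}\langle X,\nu\rangle\,d\mu_g$, and by linearity $\langle X,\nu\rangle=\langle\w,\nu\lrcorner\w'\rangle$. Finally $\nu\lrcorner(\nu\lrcorner\w')=0$, so $\nu\lrcorner\w'$ is tangential and the normal part of $\w$ does not contribute, giving $\langle\w,\nu\lrcorner\w'\rangle=\langle\iota^*\w,\nu\lrcorner\w'\rangle$ along $\partial M$; this is precisely \eqref{eq:ippPourLaPreuve}.

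With \eqref{eq:ippPourLaPreuve} in hand I would first record its adjoint form, obtained by renaming the forms and using the symmetry of $\langle\cdot,\cdot\rangle$: for $\alpha\in\Omega^{p+1}(M)$ and $\beta\in\Omega^p(M)$, $\int_M\langle\delta\alpha,\beta\rangle\,d\mu_g=\int_M\langle\alpha,d\beta\rangle\,d\mu_g+\int_{\partial M}\langle\nu\lrcorner\alpha,\iota^*\beta\rangle\,d\mu_g$. Expanding $\Delta=d\delta+\delta d$, applying \eqref{eq:ippPourLaPreuve} to $\langle d(\delta\w),\w'\rangle$ and the adjoint form to $\langle\delta(d\w),\w'\rangle$, the interior terms assemble into $\int_M\langle d\w,d\w'\rangle+\int_M\langle\delta\w,\delta\w'\rangle$ and the boundary terms into $\int_{\partial M}\langle\nu\lrcorner d\w,\iota^*\w'\rangle-\int_{\partial M}\langle\iota^*\delta\w,\nu\lrcorner\w'\rangle$, which is exactly \eqref{eq:IPP3'}. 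Writing \eqref{eq:IPP3'} once for $(\w,\w')$ and once for $(\w',\w)$ and subtracting then yields \eqref{eq:IPP1}: the two interior integrals are symmetric in $\w,\w'$ and cancel, while the four boundary integrals recombine, again by symmetry of $\langle\cdot,\cdot\rangle$, into the stated form.

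I expect the main (indeed only) delicate point to be the sign bookkeeping in \eqref{eq:ippPourLaPreuve}, where two conventions interact: $\nu$ being the inward normal forces the minus sign in the divergence theorem, and one must check that the normal component of $\w$ drops out of the boundary pairing. Everything downstream is formal, so there should be no further obstacle; as a safeguard against an orientation sign error I would check \eqref{eq:ippPourLaPreuve} in the model case $p=0$, where it reduces to the classical divergence theorem.
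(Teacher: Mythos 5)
Your proof is correct, and it takes a genuinely different route from the paper: the paper does not prove this proposition at all, but simply cites the literature (\eqref{eq:ippPourLaPreuve} and \eqref{eq:IPP1} to Taylor, \eqref{eq:IPP3'} to Taylor and to El Chami--Ginoux--Habib--Makhoul), whereas you give a self-contained derivation. Your argument checks out in detail: the vector field $X=\sum_i\langle\w,e_i\lrcorner\w'\rangle e_i$ is frame-independent (it is the metric dual of $Y\mapsto\langle\w,Y\lrcorner\w'\rangle$), the computation $\dive X=\langle d\w,\w'\rangle-\langle\w,\delta\w'\rangle$ at a point with parallel frame uses exactly the local expressions and the fiberwise adjointness of $e_i^\flat\wedge\cdot$ and $e_i\lrcorner\cdot$ that the paper recalls, the sign in the divergence theorem is the right one for the \emph{inward} normal, and the observation that $\nu\lrcorner(\nu\lrcorner\w')=0$ forces $\langle\w,\nu\lrcorner\w'\rangle=\langle\iota^*\w,\nu\lrcorner\w'\rangle$ is precisely the equivalence the paper notes after the proposition when comparing with Taylor's formulation. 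Your logical organization also differs usefully from the paper's: you obtain \eqref{eq:IPP3'} by splitting $\Delta=d\delta+\delta d$ and applying \eqref{eq:ippPourLaPreuve} and its adjoint form, and then get \eqref{eq:IPP1} for free by antisymmetrizing \eqref{eq:IPP3'}, so the only analytic input is the single Green formula; the paper instead treats \eqref{eq:IPP1} and \eqref{eq:IPP3'} as independent imported facts. What each approach buys: the citation is economical, while your derivation makes all the sign conventions (inward $\nu$, positive Laplacian, $\delta=-\sum_i e_i\lrcorner\nabla_{e_i}$) explicit and verifiable against the paper's own conventions rather than those of the cited sources --- a real safeguard, since sign slips of exactly this kind are the most likely failure mode, and your $p=0$ sanity check closes that loophole.
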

\begin{proof}
    Equations \eqref{eq:ippPourLaPreuve} and \eqref{eq:IPP1} are shown in \cite[p.182]{taylor}. Equation \eqref{eq:IPP3'} is shown in \cite[Equation $30$]{BesselFidaGeorgeNicolas} and \cite[p.421]{taylor}. \end{proof}
  
    {\begin{cor}
For any $\omega, \omega' \in \Omega^p(M)$, we have:\\
First,\begin{align}
\int_M \langle \Delta^2 \omega , \omega' \rangle \, d\mu_g
&= \int_M \langle \Delta \omega , \Delta \omega' \rangle \, d\mu_g \notag \\
&\quad + \int_{\partial M} \Big[
    \langle \nu \lrcorner d \Delta \omega, \iota^* \omega' \rangle
    - \langle \iota^* \Delta \omega, \nu \lrcorner d \omega' \rangle \notag \\
&\qquad\qquad
    + \langle \nu \lrcorner \Delta \omega, \iota^* \delta \omega' \rangle
    - \langle \iota^* \delta \Delta \omega, \nu \lrcorner \omega' \rangle
\Big] \, d\mu_g.
\label{eq:IPP2}
\end{align}
Second, \begin{align}
\int_M \langle \Delta \omega, \omega \rangle \, d\mu_g
&= \int_M \left( |d\omega|^2 + |\delta \omega|^2 \right) d\mu_g \notag \\
&\quad + \int_{\partial M} \Big[
    \langle \nu \lrcorner d\omega, \iota^* \omega \rangle
    - \langle \iota^* \delta \omega , \nu \lrcorner \omega \rangle
\Big] \, d\mu_g.
\label{eq:IPP3}
\end{align}
Finally, \begin{align}
\int_M \langle \Delta^2 \omega, \omega' \rangle \, d\mu_g
&- \int_M \langle \omega, \Delta^2 \omega' \rangle \, d\mu_g \notag \\
&= \int_{\partial M} \Big[
    \langle \nu \lrcorner d \Delta \omega, \iota^* \omega' \rangle
    - \langle \iota^* \omega, \nu \lrcorner d \Delta \omega' \rangle \notag \\
&\qquad
    - \langle \iota^* \delta \Delta \omega, \nu \lrcorner \omega' \rangle
    + \langle \nu \lrcorner \omega, \iota^* \delta \Delta \omega' \rangle \notag \\
&\qquad
    + \langle \nu \lrcorner \Delta \omega, \iota^* \delta \omega' \rangle
    - \langle \iota^* \delta \omega, \nu \lrcorner \Delta \omega' \rangle \notag \\
&\qquad
    - \langle \iota^* \Delta \omega, \nu \lrcorner d \omega' \rangle
    + \langle \nu \lrcorner d \omega, \iota^* \Delta \omega' \rangle
\Big] \, d\mu_g.
\label{eq:IPP4}
\end{align}
\end{cor}
   \begin{proof}
        Replacing $\w$ with $\Delta \w$ in \eqref{eq:IPP1}, we obtain \eqref{eq:IPP2}.
    In addition, for $\w = \w'$ in \eqref{eq:IPP3'} we obtain \eqref{eq:IPP3}. Moreover, applying \eqref{eq:IPP2} to $\int_M \langle \Delta^2 \omega, \omega' \rangle \, d\mu_g - \int_M \langle \omega, \Delta^2 \omega' \rangle \, d\mu_g $ gives us \eqref{eq:IPP4}.
    \end{proof}}
Note that in \cite{taylor}, the identities involve $ \w $ on the boundary rather than  $\iota^*\w$, as in the above proposition. This is equivalent since for all $\alpha\in\Omega^p(M),\ \beta\in\Omega^{p+1}(M)$ we have $\langle\alpha,\nu\lrcorner\beta\rangle=\langle\iota^*\alpha,\nu\lrcorner\beta\rangle$ on $\partial M$.

\subsection{Elliptic operators and existence of solutions}\label{sec:EllipticiteConvFaible}

We still denote by $\nabla$ the Levi-Civita connection on the tangent bundle. Let $E\longrightarrow M$ be a smooth vector bundle equipped with a bundle norm. We write $\Gamma(E)$ for smooth sections of $E.$
For $ 1\leq p < \infty$ and $ m \in \mathbb{N} $. We use the following Sobolev spaces : 
\begin{itemize}
    \item $ L^k(M;E) = \left\{ \w\in\Gamma(E) \ \middle| \ \int_M \lvert \w \rvert^k \, d\mu_g < +\infty \right\} / \left\{ \w\in\Gamma(E) \ \middle| \  \w=0\ \text{a.e} \right\} $,

    \item $ W^{m,k}(M;E) = \left\{ \w \in L^k(M) \ \middle| \ \forall \ 1\leq j\leq m, \nabla^j\w \in L^k(M;E^{(j)}) \right\} $,$\text{where $E^{(j)}=(TM)^{\otimes j}\otimes E$}$,

    \item $ H^m(M;E) = W^{m,2}(M;E)$,

    \item $ H^1_0(M;E) = \left\{ \w \in H^1(M;E) \ \middle| \ \ \w_{|\partial M} = 0 \right\} $,

    \item $ H^2_0(M;E) = \left\{ \w \in H^2(M;E) \ \middle| \ \w_{|\partial M} = 0 \ \text{and} \ (\nabla \w)_{|\partial M} = 0 \right\} $.
\end{itemize} The last two Sobolev spaces above are well-defined because of \thref{th:trace}. If $M$ is a compact Riemannian manifold, $s\in\mathbb{R},$ there is a natural isomorphism $H^s(M;E)'\approx H^{-s}(M;E)$, see \cite[Chapter 4 - Proposition 3.2]{taylor}. For the definition of $H^s(M;E)$ when $s\in\mathbb{R}\setminus\mathbb{Z}$, see \cite[Chapter 4 - Section 4]{taylor}. For the definitions in $\mathbb{R}^n$, see \cite{LionsMagenes, Hitchhiker, taylor}. For more general definitions of Sobolev spaces on compact manifolds, see \cite{AliMichael}.\\
In the sequel, the vector bundle $E$ is one of the $\Lambda^pT^*M$ and we omit it in the notations by writing $L^p(M)=L^p(M;E)$ and similarly for other spaces.\\
{We now recall the trace theorem, which is established for open sets in $\mathbb{R}^n$ 
in \cite[Section 5.5]{Evans}, with generalizations on Euclidean domains in \cite{LambertiProvenzano}, on Lipschitz domains in \cite{
TraceLipschitz2,TraceLipschitz1}, and on compact Riemannian manifolds with smooth boundaries 
in \cite[Chapter 1 Section 8, p.209 Theorem 6.5]{LionsMagenes}, \cite[Chapter 4,  Section 4]{taylor}, \cite{TaylorSeeleyNote}.  
\begin{thm}\label{th:trace}
Let $(M, g)$ be a compact Riemannian manifold with smooth boundary $\partial M$. For any $s>0$ and $1 < p < \infty$, there exists a unique bounded linear operator 
\[
T \colon W^{s,p}(M) \to W^{s-\frac{1}{2},p}(\partial M)
\]
such that $Tu = u|_{\partial M}$ for all $u \in C^\infty(M)$.
\end{thm} This operator allows for the well-posed definition of the subspace with zero boundary traces:
\[
W_0^{k,p}(M) = \{ u \in W^{k,p}(M) \mid Tu = 0 \text{ on } \partial M \},
\]
which coincides with the closure of $C_c^\infty(\mathrm{int}(M))$ in the $W^{k,p}(M)$ norm.}

As noted by S. Raulot \cite[Section 2.4]{raulot}, we recall the definition of the notion of ellipticity in the sense of Shapiro-Lopatinskij for boundary conditions imposed on linear and elliptic differential operators, as presented in \cite[Paragraph 1.6]{GunterSchwartz}. The study of ``satisfactory'' boundary conditions for an elliptic differential operator (assumed to be of order one) acting on smooth sections of a Hermitian vector bundle $E \rightarrow M$ was undertaken in the fifties by Shapiro and Lopatinskij. This method is crucial in showing that solutions to the boundary value problem exist and are smooth and it has been widely used in the literature (see, e.g., \cite{BesselFidaGeorgeNicolas, BSFidaGeorgeOlaNicolas, BucklingFida, FidaOla, Raulot2}).

Let us consider the following {boundary-value problem}:
\begin{equation}\label{eq:PbPourLaDefDeShapiro}
\begin{cases}
    A \w = 0 & \text{in } M, \\
    B_1\w = 0 & \text{on } \partial M, \\
    B_2\w = 0 & \text{on } \partial M, \\
    \vdots & \vdots \\
    B_k\w = 0 & \text{on } \partial M,
\end{cases}
\end{equation}
where $A:\Gamma(E)\longrightarrow\Gamma(E)$ is a linear differential operator of order $m$, and for all $1\leq j\leq k$ $B_j:\Gamma(E)\longrightarrow\Gamma(E_j)$ is a linear differential operator. Here $E\longrightarrow M$ and $(E_j\longrightarrow \partial M)_{1\leq j\leq k}$ are vector bundles.

The principal symbol \begin{eqnarray*}
 \sigma_A:T^*_xM &\longrightarrow & \Endo (E_x)\\
 \xi& \longmapsto &  \sigma_A(\xi)  \notag 
    \end{eqnarray*} has locally a polynomial homogeneous expression of order $k$ in the coordinates of $\xi.$ Therefore, it extends to generalized covectors with values in the algebra of differential operators on $C^\infty(\mathbb{R},E_x\otimes\mathbb{C})$, i.e., $\tilde{\xi}\in T^*_xM\otimes(\mathbb{C}\oplus\mathbb{C}\cdot \partial_t),$ where the multiplication in $\mathbb{C}\oplus\mathbb{C}\cdot \partial_t$ is given by $(a_1+b_1\partial_t)(a_2+b_2\partial_t)=a_1a_2+(a_1b_2+a_2b_1)\partial_t+b_1b_2\partial_t^2.$

    It is known by \cite[p.182]{taylor} that the Hodge-de Rham Laplacian and bi-Laplacian are elliptic, and their principal symbols are given by: 
    \[\sigma_\Delta(\xi) = -|\xi|^2 \qquad \text{and} \qquad \sigma_{\Delta^2}(\xi) =|\xi|^4,\]
    where $\xi \in T_x^*M$. 

Here is the definition of the ellipticity in the sense of Shapiro-Lopatinskij originally formulated in \cite{Lopatinskij,Shapiro}. \begin{definition}\cite[Definition 20.1.1]{Hormander}\cite[Definition 1.6.1]{GunterSchwartz}\label{def:ellipticiteShapiro}
The problem \eqref{eq:PbPourLaDefDeShapiro} is said to be \emph{elliptic} in the sense of Shapiro-Lopatinskij if and only if:
\begin{itemize}
    \item[$\bullet$] $A$ is elliptic, i.e., $\sigma_{A}(\xi)$ is invertible for all $\xi \in T^*M \backslash \{0\}$. 
    \item[$\bullet$] For all $v \in T_x^*\partial M \backslash \{0\}$, the map $\Phi$
    \begin{align*}
    \Phi : \mathcal{M}^+_{A,v} &\longrightarrow \bigoplus_{j=1}^{k} E_j \\
    y &\longmapsto \left( \sigma_{B_1}(-iv+\partial_t\nu^\flat)y, \sigma_{B_2}(-iv+\partial_t\nu^\flat)y,\dots, \sigma_{B_k}(-iv+\partial_t\nu^\flat)y \right)(0)
    \end{align*}
    is an isomorphism,
    where $-iv + \partial_t\nu^\flat\in T^*_xM\otimes(\mathbb{C}\oplus\mathbb{C}\cdot \partial_t)$ for $x\in\partial M$ and
    \begin{equation}\label{eq:Mv+}
    \mathcal{M}^+_{A,v} := \{\text{bounded solutions } y=y(t) \text{ on } \mathbb{R}_+ \text{ of the ODE } \sigma_{A}(-iv+\partial_t\nu^\flat)y=0\}.
    \end{equation}
    
\end{itemize}
\end{definition}
A direct computation shows that in the case of the bi-Laplacian, the space $\mathcal{M}^+_{\Delta^2,v}$ is given by:
\[
\mathcal{M}^+_{\Delta^2,v} = \spann\left( e^{-|v| t}(at+b) \cdot \w_0 \mid a,b \in \mathbb{R},\ \w_0 \in \Lambda^p T^*_x M_{|\partial M} \right)
\]
and its dimension is $2 \binom{n}{p}$.

We now present some results from \cite[Chapter 2]{LionsMagenes}, which are useful for our work. We adapt the foundational results from \cite{GunterSchwartz, LionsMagenes} to our specific framework, which provides the necessary basis for establishing the existence of solutions in our setting.

 The following is an interpretation of \cite[Chapter 2 - Theorem 2.1]{LionsMagenes} in our context.
On $\Gamma(E),$ we say that the problems \[
\begin{array}{c@{\hspace{1cm}}c}
\begin{cases}
    \begin{aligned}
    &Au & = 0 && \text{on } M \\
    &B_1u &= 0 && \text{on } \partial M \\
    &\vdots&  \vdots &&\\
    &B_ku &= 0 && \text{on } \partial M \\
    \end{aligned}
\end{cases}
& \text{and} \qquad \quad
\begin{cases}
    \begin{aligned}
    &A^*u & = 0 && \text{on } M \\
    &C_1u &= 0 && \text{on } \partial M \\
    &\vdots&  \vdots &&\\
    &C_ku &= 0 && \text{on } \partial M \\
    \end{aligned}
\end{cases}
\end{array}
\] are \emph{dual}, if 
\begin{enumerate}
    \item $A$ is a linear elliptic differential operator of order $m$ and $A^*$ is the \emph{formal adjoint} of $A,$ i.e., for all $u,v\in \Gamma(E)$ with compact support in $M\setminus \partial M$, $\int_M\langle Au,v\rangle d\mu_g=\int_M\langle u,A^*v\rangle d\mu_g.$
    \item  There exist operators $(T_1,\cdots,T_k)$, $(S_1,\cdots,S_k)$ such that $\forall i \in \{1,\cdots,k\}$, $\deg(T_i)=m-1-\deg(B_i)$ and $\deg(S_i)=m-1-\deg(C_i)$.
    \item For all $u,v\in\Gamma(E),$ we have  \begin{equation} \label{eq:greenLionsMagenes}
    \int_{M} \langle Au,{v}\rangle \, d\mu_g - \int_{M} \langle u,{A^*v}\rangle \, d\mu_g  = \sum_{i=1}^{k} \int_{\partial M} \langle S_i u, {C_i v}\rangle \, d\mu_g - \sum_{i=1}^{k} \int_{\partial M} \langle B_i u, {T_i v} \rangle \, d\mu_g.
    \end{equation}
\end{enumerate}
  In this paper, we take $A=\Delta$ or $A=\Delta^2$, both are formally self adjoint: $A=A^*$. According to $A$ and the boundary conditions which we consider, the operators $B_j,\ C_j,\ S_j,$ and $T_j$ vary in each situation due to the integration by parts formulas presented in \propref{prop:IPP}. For example in the proof of  \propref{prop:l1positive} we have $C_1\w=-\nu\lrcorner d\w,\ C_2\w=-\nu\lrcorner\w,\ B_1\w=\nu\lrcorner\w,\ B_2\w=\nu\lrcorner d\w,\ T_1\w=-\iota^*\delta\w,\ T_2\w=-\iota^*\w,\ S_1\w=\iota^*\w$ and $S_2\w=\iota^*\delta\w.$ 

 Assuming that the problems are dual, let $\mathcal{N}$ and $\mathcal{N}^*$ be the spaces defined by:  $\mathcal{N} := \{u \in \Gamma(E) \mid Au = 0, B_1u = 0, \dots, B_{k}u = 0\}$ and $\mathcal{N}^* := \{u \in \Gamma(E) \mid A^*u = 0, C_1u = 0, \dots, C_{k}u = 0\}$.

The following result will be used throughout the paper:
\begin{thm}\cite[Chapter 2 - Theorems 5.3 and 5.4]{LionsMagenes}\label{theo:thm5.4}
   Under the following assumptions:
    \begin{enumerate}[label=\roman*.]
        \item $M$ is a compact manifold with smooth boundary $\partial M$.
        \item  $A : \Gamma(E) \to \Gamma(E)$ is a differential operator of order $m$.
        \item The problem \eqref{eq:PbPourLaDefDeShapiro} is elliptic in the sense of Shapiro-Lopatinskij,

    \end{enumerate}
  Then for any $\boldsymbol{s \geq m}$, the following map
    \begin{align*}
    \mathcal{P}:H^s(M)/\mathcal{N} &\longrightarrow  H^{s-m}(M) \times \prod_{j=1}^{k} H^{s-m_j-\frac{1}{2}}(\partial M) \\
    \left[\w\right] &\longmapsto \left( A\w, B_1\w, \dots, B_{k}\w \right)
\end{align*}
  is a topological and algebraic isomorphism onto the space \[
\begin{aligned}
    \im(\mathcal{P}) &= \left\{ (f; g_1, \dots, g_k) \in H^{s-m}(M) \times \prod_{j=1}^{k} H^{s-m_j-\frac{1}{2}}(\partial M) \mid \right. \\
    & \quad \left. \int_M \langle f, v\rangle \, d\mu_g + \sum_{j=1}^{k} \int_{\partial M} \langle g_j, T_j v \rangle \, d\mu_g = 0 \ \text{for all } v \in \mathcal{N}^* \right\}.
\end{aligned}
\]
In particular, solutions of $(A\w,B_1\w,B_2\w,\cdots,B_k\w)=(0,\w_1,\cdots,\w_k)$, $\w_i\in \Omega(\partial M)$, are in all $H^s(M)$ with $s\geq m$, i.e., are smooth.
\end{thm}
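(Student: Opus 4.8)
The plan is to follow the classical parametrix-and-Fredholm strategy for elliptic boundary value problems, of which this is a special case. The heart of the matter is an a priori elliptic estimate: for every $s\geq m$ there should be a constant $C$ with
\[
\|\w\|_{H^s(M)} \leq C\Big( \|A\w\|_{H^{s-m}(M)} + \sum_{j=1}^{k} \|B_j\w\|_{H^{s-m_j-1/2}(\partial M)} + \|\w\|_{L^2(M)} \Big)
\]
for all $\w\in H^s(M)$. First I would localize via a finite atlas and a subordinate partition of unity, reducing the estimate to model estimates in interior charts and in boundary charts where $\partial M$ is flattened to $\{x_n=0\}\subset \mathbb{R}^n_+$ and the coefficients of $A$ and the $B_j$ are frozen at a point. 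In the interior, the ellipticity of $A$ (invertibility of $\sigma_A(\xi)$, from \defref{def:ellipticiteClassic}) yields a standard pseudodifferential parametrix. At a boundary point I would take the partial Fourier transform in the tangential variables, turning the model problem, for each tangential covector $v\neq 0$, into the constant-coefficient ODE $\sigma_A(-iv+\partial_t\nu^\flat)y=0$ on $\mathbb{R}_+$ together with the data $\sigma_{B_j}(-iv+\partial_t\nu^\flat)y$ at $t=0$. The Shapiro-Lopatinskij condition, namely the isomorphism $\Phi$ of \defref{def:ellipticiteShapiro}, is precisely the statement that these $k$ boundary data determine a unique decaying solution $y\in\mathcal{M}^+_{A,v}$; made uniform in $v$ by homogeneity, this unique solvability furnishes a Poisson-type boundary parametrix and the corresponding half-space estimate.

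Patching the interior and boundary parametrices with the partition of unity then produces a global operator $Q$ with $\mathcal{P}Q=\mathrm{Id}+R_1$ and $Q\mathcal{P}=\mathrm{Id}+R_2$, where $R_1,R_2$ are smoothing, hence compact on the relevant Sobolev scales. Consequently $\mathcal{P}$ is Fredholm. Two consequences follow. First, elliptic regularity by bootstrapping: if $\w\in H^s$ solves the homogeneous problem, applying the a priori estimate on each scale upgrades $\w$ to $H^{s'}$ for all $s'$, so $\w$ is smooth; this identifies the kernel $\mathcal{N}$ and proves the final "in particular" assertion. Second, $\mathcal{P}$ has closed range of finite codimension, so by the open mapping theorem it descends to a topological and algebraic isomorphism $H^s(M)/\mathcal{N}\to\im(\mathcal{P})$.

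It remains to identify $\im(\mathcal{P})$. Here I would invoke the Green formula \eqref{eq:greenLionsMagenes}: for $v\in\mathcal{N}^*$ one has $A^*v=0$ and $C_iv=0$, so taking $u=\w$ the identity collapses to
\[
\int_M \langle A\w, v\rangle \, d\mu_g + \sum_{i=1}^k \int_{\partial M} \langle B_i\w, T_i v\rangle \, d\mu_g = 0,
\]
which is exactly the orthogonality condition defining the target space with $f=A\w$ and $g_j=B_j\w$; thus $\im(\mathcal{P})$ is contained in that space. The reverse inclusion is the Fredholm alternative: the cokernel of $\mathcal{P}$ is canonically paired, through this same Green formula, with $\mathcal{N}^*$, so the stated compatibility conditions are also sufficient, and the two spaces coincide.

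The main obstacle is the construction of the boundary parametrix together with the uniform (in the tangential frequency $v$) half-space estimate; this is where the Shapiro-Lopatinskij condition is indispensable and is by far the technically heaviest ingredient, the remaining localization, patching, Fredholm, and duality bookkeeping being comparatively routine. Since this is the precise content of \cite{LionsMagenes}, Chapter 2, Theorems 5.3 and 5.4, I would ultimately cite it rather than reproduce the full parametrix calculus here.
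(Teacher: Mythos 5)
The paper offers no proof of this statement: it is quoted verbatim as an external result from Lions--Magenes (Chapter 2, Theorems 5.3 and 5.4), so the "paper's approach" is simply the citation. Your sketch is a faithful and correct outline of the standard parametrix/Fredholm argument behind that reference, and since you also conclude by deferring to the same citation, your route is essentially the same as the paper's.
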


\subsection{Variational characterizations of classical eigenvalue problems on differential forms}\label{sec:DNSformes}
{In this section, we recall the classical problems on differential forms and their variational characterizations, which are derived from the min-max principle (see \cite[Chapter 1]{Chavel}), as we use them to establish the Kuttler-Sigillito inequalities. The proofs primarily follow the same lines as those used for the scalar case and we omit them here. In this context, let $(M^n,g)$ be a compact connected Riemannian manifold with boundary $\partial M$. The Dirichlet problem for differential forms is defined as follows:
\begin{equation}\label{eq:dirichletFormes}
 \begin{cases}
    \Delta \w = \lambda \w & \text{on } M \\
    \w = 0 & \text{on } \partial M,
\end{cases}
\end{equation}
where $\lambda \in \mathbb{R}$. For $p\in\{0,\cdots,n\}$, the kernel of the Dirichlet problem is trivial. The spectrum of the Laplacian of Dirichlet on forms is discrete, consisting of eigenvalues 
\[0<\lambda_{1,p} \leq \lambda_{2,p} \leq \ldots\]
The $k\textsuperscript{th}$ eigenvalue $\lambda_{k,p}$ \cite{BesselFidaGeorgeNicolas,GueriniThese} is given by \begin{align}
\lambda_{k,p}&=\underset{\substack{V\subset{H}^1_0(M)\\ \dim(V)=k}}{\min}\ \underset{\substack{0\neq \w \in V }}{\max} \frac{\lVert d\w \rVert^2_{L^2(M)} + \lVert \delta \w \rVert^2_{L^2(M)}}{\lVert \w \rVert^2_{L^2( M)}}\label{eq:vpKDirichletForme} \\ 
&=\underset{\substack{V\subset H_0^1(M)\cap H^2(M)\\ \dim(V)=k}}{\min}\ \underset{\substack{0\neq \nabla\w \in V }}{\max} \frac{\lVert \Delta\w \rVert^2_{L^2( M)}}{\lVert d\w \rVert^2_{L^2(M)} + \lVert \delta \w \rVert^2_{L^2(M)}}.\label{eq:vpKAlternativeDirichletForme}
\end{align}
The Neumann problem for differential forms (with absolute boundary conditions) is defined as follows:
\begin{equation}\label{eq:neumannFormesAbsolue}
 \begin{cases}
    \Delta \w = \mu \w & \text{on } M \\
    \nu \lrcorner \w = 0 & \text{on } \partial M \\
    \nu \lrcorner d\w = 0 & \text{on } \partial M,
\end{cases} 
\end{equation}
where $\mu\in \mathbb{R}$. For $p\in\{0,\cdots,n\}$, using the integration by parts formula \eqref{eq:IPP3}, we deduce that the kernel of the Neumann problems is $H_A^p(M)$ \eqref{eq:Hap}. The spectrum of the Laplacian of Neumann on forms is discrete, consisting of eigenvalues 
\[0<\mu_{1,p} \leq \mu_{2,p} \leq \ldots \]
 The $k\textsuperscript{th}$ non-zero eigenvalue $\mu_{k,p}$ \cite{NeumannProblemConner,BesselFidaGeorgeNicolas,GueriniThese} is given by 
   \begin{align}
      \mu_{k,p}&= \underset{\substack{ V\subset H_N^1(M)\\ \dim(V)=k+\dim H_A^p(M)}}{\min}\ \underset{\substack{0\neq \w \in V }}{\max} \frac{\lVert d\w \rVert^2_{L^2(M)} + \lVert \delta \w \rVert^2_{L^2(M)}}{\lVert \w \rVert^2_{L^2( M)}}\label{eq:vp1AlternativeNeumannAbsolueForme1} \\ 
      &= \underset{\substack{V\subset H^2_{N}(M)/ H_A^p(M)\\ \dim(V)=k}}{\min} \ \  \underset{\substack{0\neq \w \in V/ H_A^p(M)}}{\max} \ \frac{\lVert \Delta\w \rVert^2_{L^2( M)}}{\lVert d\w \rVert^2_{L^2(M)} + \lVert \delta \w \rVert^2_{L^2(M)}}, \label{eq:vp1AlternativeNeumannAbsolueForme}
   \end{align} where \begin{align}
    H_N^1(M) &:= \left\{ \w \in H^1(M) \mid \nu \lrcorner \w = 0 \text{ on } \partial M \right\} \ \text{and}\ \nonumber \\
    H^2_{N}(M) &:= \left\{ \w \in H^2(M) \mid \nu \lrcorner \w = 0 \text{ and } \nu \lrcorner d\w = 0 \text{ on } \partial M \right\}. \label{eq:hn3}
\end{align}
The Steklov problem for differential forms is defined as follows: \cite{RaulotSavo}
\begin{equation}\label{eq:steklovFormes}
 \begin{cases}
    \Delta {\w} = 0 & \text{on } M \\
     \nu \lrcorner {\w} = 0& \text{on } \partial M \\
     \nu\lrcorner d\w= -\sigma\iota^*{\w} & \text{on } \partial M,
\end{cases}
\end{equation}where $\sigma\in \mathbb{R}$. 
For $p\in\{0,\cdots,n\}$, the kernel of the Steklov problem is $H_A^p(M)$ \eqref{eq:Hap}. The spectrum of the Steklov operator on forms is discrete, consisting of eigenvalues 
\[0 < \sigma_{1,p} \leq \sigma_{2,p} \leq \ldots 
\] The $k$\textsuperscript{th} non-zero eigenvalue $\sigma_{k,p}$ \cite{Karpukhin, RaulotSavo} is given by
   \begin{align}
\sigma_{k,p}&=\underset{\substack{ V\subset\check{H}_N^1(M)\\\dim(V)=k}}{\min}\ \underset{\substack{0\neq \w \in V }}{\max} \frac{\lVert d\w \rVert^2_{L^2(M)} + \lVert \delta \w \rVert^2_{L^2(M)}}{\lVert \w \rVert^2_{L^2(\partial M)}}\label{eq:sigmakformesSteklov}\\  
&=\underset{\substack{V\subset\mathcal{A}\\ \dim(V)=k}}{\min}\ \  \underset{\substack{0\neq \w \in V }} {\max} \frac{\lVert \nu\lrcorner d\w \rVert^2_{L^2(\partial M)}}{\lVert d\w \rVert^2_{L^2(M)} + \lVert \delta \w \rVert^2_{L^2(M)}}\label{eq:sigmakformesSteklov2},
\end{align}
where $\check{H}_N^1(M):=\{\w\in H^1_N(M)\ | \ \w\perp_{L^2(\partial M)}H_A^p(M)\}$ and $\mathcal{A}:=\{\w\in H^2(M)\ | \ \nu\lrcorner\w=0,\ \Delta\w=0\}$.
}

\section{Spectral properties of BSN}\label{sec:BSN}
In the following, we show that the spectrum associated with problem \eqref{eq:BSNF3} is discrete and consists entirely of eigenvalues of finite multiplicity. The proof is largely based on the methodology presented in \cite{BSFidaGeorgeOlaNicolas, FerreroGazzolaWeth}.
The main result of this section is the following:

\begin{thm}\label{thm:TheoremeGeneralBSNF3PourLaPreuve} Let $(M^n,g)$ be a compact Riemannian manifold with a smooth boundary $\partial M$ and let $\nu$ be the unit inward normal vector to the boundary. Then the BSN \begin{equation}\label{eq:BSNF3preuve}
(BSN)\begin{cases}
\Delta^2 \w = 0 & \text{on } M \\
\nu \lrcorner \w = 0 & \text{on } \partial M \\
\nu \lrcorner d \w = 0 & \text{on } \partial M \\
\nu \lrcorner \Delta \w = 0 & \text{on } \partial M \\
\nu \lrcorner \Delta d\w +  \ell \iota^* \w = 0 & \text{on } \partial M,
\end{cases}
\end{equation} for differential $p$-forms, has a discrete spectrum consisting of an unbounded non-decreasing sequence of positive real eigenvalues with finite multiplicities $(\ell_{i,p})_{i\geq0}$. Its kernel is given by $H_A^p(M)$ \eqref{eq:Hap}.
\end{thm}

\subsection{Formulation of the {boundary-value problem}}\label{sec:weak}
We seek a generalization of the scalar BSN \eqref{eq:IntroBSN} to differential forms that allows us to establish the Kuttler--Sigillito inequalities in this setting. Based on the definition of $H^2_{N}(M)$ given in \eqref{eq:hn3}, we consider the subspace
\begin{equation}\label{eq:HN1check}
\begin{aligned}
\check{H}^2_{N}(M)
&:= \left\{ \omega \in H^2_{N}(M)\ \text{such that}\ \omega \perp_{L^2(\partial M)} H_A^p(M) \right\}.
\end{aligned}
\end{equation} To prove the inequalities, we need $\ell_{1,p}$ to take the following form:
\begin{align*}
\ell_{1,p}
&= \inf\left\{ 
\frac{\lVert \Delta \omega \rVert^2_{L^2(M)}}{\lVert \iota^* \omega \rVert^2_{L^2(\partial M)}} 
\;\middle|\; 
\omega \in \check{H}^2_{N}(M),\ \iota^* \omega \neq 0 \ \text{on}\ \partial M 
\right\}.
\end{align*}
The aim is to define the problem and determine the boundary conditions, that is why we establish its weak formulation. To do this we compute the critical points of the following functional, expressing the Poisson ratio for the BSN problem: $\mathcal{Q}(\w) := \frac{\lVert \Delta \w \rVert^2_{L^2(M)}}{\lVert \iota^*\w\rVert^2_{L^2(\partial M)}}$ on $\check{H}^2_{N}(M)$ which will be the eigenforms of the BSN problem that we determine. The critical points of the functional $ \mathcal{Q}(\w) $ are those $\w$ such that $ d_{\w} \mathcal{Q}$ is zero, i.e, $\w\in \check{H}^2_{N}(M)$ is a critical point of $\mathcal{Q}(\w)$ if and only if for all $ \w' \in \check{H}^2_{N}(M)$, we have
\begin{equation}\label{eq:pointsCritiquesBSN}
    (\Delta\w,\Delta\w')_{L^2(M)}\lVert\iota^*\w\rVert^2_{L^2(\partial M)} = (\iota^*\w,\iota^*\w')_{L^2(\partial M)}\lVert\Delta\w\rVert^2_{L^2(M)}.
\end{equation}
Using an integration by parts \eqref{eq:IPP2} in \eqref{eq:pointsCritiquesBSN}, we get
\[
(\Delta^2\w,\w')_{L^2(M)} - (\nu\lrcorner d\Delta\w,\iota^*\w')_{L^2(\partial M)} - (\nu\lrcorner \Delta\w,\iota^*\delta\w')_{L^2(\partial M)}= \frac{\lVert \Delta\w\rVert^2_{L^2(M)}}{\lVert \iota^*\w\rVert^2_{L^2(\partial M)}}(\iota^*\w,\iota^*\w')_{L^2(\partial M)}.
\]
Denoting $\ell=\frac{\lVert \Delta\w\rVert^2_{L^2(M)}}{\lVert \iota^*\w\rVert^2_{L^2(\partial M)}}$, it follows that
\begin{equation}\label{eq:preuveFonctionnelleBSN}
(\Delta^2\w,\w')_{L^2(M)} = \left(\nu\lrcorner d\Delta\w+\ell\iota^*\w,\iota^*\w'\right)_{L^2(\partial M)}+ (\nu\lrcorner \Delta\w,\iota^*\delta\w')_{L^2(\partial M)}.
\end{equation} To conclude the proof, we use the following lemma: 
\begin{lemma}\label{lem:Bilap1}
     For $f\in \Omega^p(M)$, $\w_1 \in \Gamma(\Lambda^{p-1}T^*\partial M)$, $\w_2 \in \Gamma(\Lambda^{p}T^*\partial M)$, $\w_3 \in \Omega^p(\partial M)$, and $\w_4 \in \Omega^{p-1}(\partial M)$ the boundary problem
    \begin{eqnarray}\label{eq:Bilap1}
   (biLap_1) \begin{cases}
    \Delta^2\w &= f \ \qquad \text{on } M \\
    \nu \lrcorner \w &= \w_1 \qquad \text{on } \partial M\\
    \nu \lrcorner d\w &= \w_2 \qquad \text{on } \partial M\\
    \iota^*\w &= \w_3 \qquad \text{on } \partial M\\
    \iota^*\delta\w &= \w_4 \qquad \text{on } \partial M.
    \end{cases}
     \end{eqnarray} has a unique solution. Moreover, if $f,\w_1,\cdots,\w_n$ are smooth, then $\w$ is also smooth.
\end{lemma}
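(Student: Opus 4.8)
The plan is to show that $(biLap_1)$ is an elliptic boundary problem in the sense of Shapiro-Lopatinskij (\defref{def:ellipticiteShapiro}) and then invoke the Lions-Magenes isomorphism theorem \thref{theo:thm5.4} for the operator $A=\Delta^2$, which has order $m=4$. Writing $B_1\w=\nu\lrcorner\w$, $B_2\w=\nu\lrcorner d\w$, $B_3\w=\iota^*\w$ and $B_4\w=\iota^*\delta\w$ for the four boundary operators (of orders $0,1,0,1$ respectively), I must check ellipticity, identify the dual problem, and verify that both the kernel $\mathcal{N}$ and the adjoint kernel $\mathcal{N}^*$ are trivial; existence and uniqueness for arbitrary data then follow at once, together with smoothness.

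For ellipticity, the principal symbols of $B_1$ and $B_2$ have already been computed in the ellipticity proof of BSN1: $\sigma_{B_1}(\xi)\w=\nu\lrcorner\w$ and $\sigma_{B_2}(\xi)\w=\xi_\nu\iota^*\w-\iota^*\xi\wedge(\nu\lrcorner\w)$. Since $\sigma_d(\xi)\w=\xi\wedge\w$ and $\sigma_\delta(\xi)\w=-\xi\lrcorner\w$, one gets $\sigma_{B_3}(\xi)\w=\iota^*\w$ and $\sigma_{B_4}(\xi)\w=-\iota^*(\xi\lrcorner\w)$. Evaluating on a generic element $y(t)=e^{-|v|t}(at+b)\w_0$ of $\mathcal{M}^+_{\Delta^2,v}$ with $\xi=-iv+\partial_t\nu^\flat$ and setting $t=0$ gives a map $\Phi(y)$ whose four components are $b\,\nu\lrcorner\w_0$, then $(a-b|v|)\iota^*\w_0+ib\,\iota^*v\wedge(\nu\lrcorner\w_0)$, then $b\,\iota^*\w_0$, and finally $ib\,v\lrcorner\iota^*\w_0-(a-b|v|)\nu\lrcorner\w_0$. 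If $\Phi(y)=0$, the first and third components force $b\,\nu\lrcorner\w_0=0$ and $b\,\iota^*\w_0=0$, hence $b\,\w_0=0$; substituting $b=0$ into the other two yields $a\,\iota^*\w_0=0$ and $a\,\nu\lrcorner\w_0=0$, hence $a\,\w_0=0$. In all cases $y=0$, so $\Phi$ is injective; as both source and target have dimension $2\binom{n}{p}$, it is an isomorphism and the problem is elliptic.

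The dual boundary conditions are read off the Green formula \eqref{eq:IPP4}: grouping its right-hand side into $\sum_i\langle S_i\w,C_i\w'\rangle-\sum_i\langle B_i\w,T_i\w'\rangle$ exhibits the four data $B_i\w$ (namely $\nu\lrcorner\w,\ \nu\lrcorner d\w,\ \iota^*\w,\ \iota^*\delta\w$) paired against operators $T_i\w'$ of complementary order, while the conjugate data $C_i\w'$ turn out to be exactly $\iota^*\w',\ \nu\lrcorner\w',\ \iota^*\delta\w',\ \nu\lrcorner d\w'$. Thus the dual problem imposes the same four boundary conditions as $(biLap_1)$: the problem is self-dual and $\mathcal{N}^*=\mathcal{N}$. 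To see $\mathcal{N}=\{0\}$, take $\w$ with $\Delta^2\w=0$ and $\nu\lrcorner\w=\nu\lrcorner d\w=\iota^*\w=\iota^*\delta\w=0$. In \eqref{eq:IPP2} with $\w'=\w$ all four boundary integrals vanish, so $\int_M|\Delta\w|^2=0$ and $\Delta\w=0$; then \eqref{eq:IPP3} gives $\int_M(|d\w|^2+|\delta\w|^2)=0$, since its boundary terms also vanish, whence $d\w=0$ and $\delta\w=0$. Therefore $\w$ is a harmonic field with $\iota^*\w=0$ and $\nu\lrcorner\w=0$, i.e. $\w_{|\partial M}=0$; such a Dirichlet harmonic field vanishes identically (extending $\w$ by zero across $\partial M$ produces a global harmonic field vanishing on an open set, so $\w\equiv0$ by unique continuation). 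Hence $\mathcal{N}=\mathcal{N}^*=\{0\}$.

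Applying \thref{theo:thm5.4} with $m=4$ and $s\geq4$, the map $[\w]\mapsto(\Delta^2\w,B_1\w,B_2\w,B_3\w,B_4\w)$ is an isomorphism onto the subspace of the target cut out by orthogonality to $\mathcal{N}^*$; since $\mathcal{N}^*=\{0\}$ this subspace is the entire target, so any data $(f,\w_1,\w_2,\w_3,\w_4)$ is admissible and yields a solution $\w$, while $\mathcal{N}=\{0\}$ makes it unique. Smoothness for smooth data follows from the regularity statement of \thref{theo:thm5.4}, valid for every $s\geq4$, giving $\w\in\bigcap_{s}H^s(M)=C^\infty$. I expect the main obstacle to be the bookkeeping of the third paragraph: correctly matching the eight boundary terms of \eqref{eq:IPP4} to identify the $C_i$ and $T_i$ and thereby establish self-duality, together with the unique-continuation argument needed to rule out nonzero Dirichlet harmonic fields; the symbol computation, by contrast, is routine and parallels the BSN1 case.
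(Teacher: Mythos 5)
Your proposal is correct and follows essentially the same route as the paper: Shapiro--Lopatinskij ellipticity via the symbol computation on $\mathcal{M}^+_{\Delta^2,v}$, identification of the dual boundary operators from the Green formula \eqref{eq:IPP4} to see that $\mathcal{N}^*=\mathcal{N}=\{0\}$, and then the Lions--Magenes isomorphism \thref{theo:thm5.4} for existence, uniqueness and smoothness. The only cosmetic difference is that you spell out the zero-extension/unique-continuation argument for why a harmonic field vanishing on $\partial M$ is identically zero, where the paper simply cites the reference.
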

\begin{proof} Let us first show that the problem $(biLap_1)$ is elliptic in the sense of Shapiro-Lopatinskij. We denote by $B_1, B_2, B_3,$ and $B_4$ the following maps:
\begin{align*}
    B_1: \Omega^p(M) &\longrightarrow \Gamma(E_1=\Lambda^{p-1}T^*\partial M) \quad ; & 
    B_2: \Omega^p(M) &\longrightarrow \Gamma(E_2=\Lambda^{p}T^*\partial M) \\
    \w &\longmapsto \nu \lrcorner \w & 
    \w &\longmapsto \nu \lrcorner d\w \\ \\
    B_3: \Omega^p(M) &\longrightarrow \Gamma(E_3=\Lambda^{p}T^*\partial M) \qquad ; & 
    B_4: \Omega^p(M) &\longrightarrow \Gamma(E_4=\Lambda^{p-1}T^*\partial M) \\
    \w &\longmapsto \iota^*\w & 
    \w &\longmapsto \iota^*\delta\w
\end{align*}
    Note that $\dim \mathcal{M}^+_{\Delta^2,v} = \dim\bigoplus_{j=1}^4 E_j = 2\binom{n}{p}$. We compute the principal symbols of the $B_i$ applied to $y=e^{-\lvert v\rvert t}(at+b)\w_0$ for some $\w_0\in\Lambda^p T^*M_{|\partial M}$.  We find \begin{align*}
\sigma_{B_1}(-iv + \partial_t \nu^\flat)(y)(0) &= b \, \nu \lrcorner \omega_0, \\
\sigma_{B_2}(-iv + \partial_t \nu^\flat)(y)(0) &= \iota^* \omega_0 (a - b |v|) + i b \, \iota^* v \wedge (\nu \lrcorner \omega_0), \\
\sigma_{B_3}(-iv + \partial_t \nu^\flat)(y)(0) &= b \, \iota^* \omega_0, \\
\sigma_{B_4}(-iv + \partial_t \nu^\flat)(y)(0) &= -a \, \nu \lrcorner \omega_0 + |v| b \, \nu \lrcorner \omega_0 + i v b \lrcorner \iota^* \omega_0.
\end{align*}
 Therefore, for all $y=e^{-|v| t}(at + b)\in \mathcal{M}_{\Delta^2,v}^+$, the map $\Phi$ of  \defref{def:ellipticiteShapiro} is given by
    \[
    \Phi(y) = \big( b\nu\lrcorner\w_0, (a-b|v|)\iota^*\w_0 + ib\iota^*v\wedge(\nu\lrcorner\w_0), b\iota^*\w_0, (b|v| - a)\nu\lrcorner\w_0 + bv\lrcorner\iota^*\w_0 \big).
    \] Let us show that $\Phi$ is injective. Suppose $\Phi(y) = 0$, then
    \begin{align*}
    &\begin{cases}
    b\nu\lrcorner\w_0 &= 0 \\
    (a-b|v|)\iota^*\w_0 + ib\iota^*v\wedge(\nu\lrcorner\w_0) &= 0 \\
    b\iota^*\w_0 &= 0 \\
    (b|v| - a)\nu\lrcorner\w_0 + bv\lrcorner\iota^*\w_0 &= 0
    \end{cases} 
    \Longrightarrow
    \begin{cases}
    b\nu\lrcorner\w_0 &= 0 \\
    (a-b\lvert v\rvert)\iota^*\w_0 &= 0 \\
    b\iota^*\w_0 &= 0 \\
    -(a-b\lvert v\rvert)\nu\lrcorner\w_0 &= 0.
    \end{cases}
    \end{align*}
    Hence, $(a-b\lvert v\rvert)\w_0 = 0$ and $b\w_0 = 0$, so $y = 0$. Thus, $\Phi$ is injective and consequently it is an isomorphism. Therefore $(biLap_1)$ is elliptic in the sense of Shapiro-Lopatinskij. Therefore, if $f,\w_1,\ldots,\w_n$ are smooth, then so is $\w.$ \\
    Performing an integration by parts \eqref{eq:IPP4} we get \[
\begin{aligned}
(\Delta^2 \w, \w')_{L^2(M)} &- (\w, \Delta^2 \w')_{L^2(M)} \\
&= (\nu \lrcorner d \Delta \w, \iota^* \w')_{L^2(\partial M)} - (\iota^* \w, \nu \lrcorner d \Delta \w')_{L^2(\partial M)} \\
&\quad - (\iota^* \delta \Delta \w, \nu \lrcorner \w')_{L^2(\partial M)} + (\nu \lrcorner \w, \iota^* \delta \Delta \w')_{L^2(\partial M)} \\
&\quad + (\nu \lrcorner \Delta \w, \iota^* \delta \w')_{L^2(\partial M)} - (\iota^* \delta \w, \nu \lrcorner \Delta \w')_{L^2(\partial M)} \\
&\quad - (\iota^* \Delta \w, \nu \lrcorner d \w')_{L^2(\partial M)} + (\nu \lrcorner d \w, \iota^* \Delta \w')_{L^2(\partial M)},
\end{aligned}
\] and based on the notations given in \secref{sec:EllipticiteConvFaible} we get, $ T_1 \w = -\iota^* \Delta \w $, $ T_2 \w = -\iota^* \delta \Delta \w $, $ T_3 \w = \nu \lrcorner \Delta \w $, and $ T_4 \w = \nu \lrcorner d \Delta \w $. Similarly,  
$ C_1 \w = \iota^* \w $, $ C_2 \w = -\nu \lrcorner \w $, $ C_3 \w = \iota^* \delta \w $, and $ C_4 \w = -\nu \lrcorner d \w $. Finally, $B_1=\nu\lrcorner d\w'$, $B_2=\nu\lrcorner\w'$, $B_3=\iota^*\delta\w'$ and $B_4=\iota^*\w'.$ For any $\w \in \mathcal{N}^*$, an integration by parts using \eqref{eq:IPP2} shows that $\Delta\w = 0$ in $M$. Since $\w_{|\partial M} = 0$, it follows from \cite{ColetteAnne} that $\w = 0$ on $M$. For the same reasons the kernel $\mathcal{N}$ of problem \eqref{eq:Bilap1} is also trivial. So by  \thref{theo:thm5.4} the map \begin{eqnarray*}
\mathcal{P}_{biLap_1}:{H^s}(M)&\longrightarrow & H^{s-4}(M) \oplus H^{s-\frac{1}{2}}(\partial M)\oplus H^{s-\frac{3}{2}}(\partial M)\oplus H^{s-\frac{1}{2}}(\partial M)\oplus H^{s-\frac{3}{2}}(\partial M)\\
  \left[\w\right]& \longmapsto & (\Delta^2\w,\nu\lrcorner\w,\nu\lrcorner d\w,\iota^*\w,\iota^*\delta\w)  \notag 
\end{eqnarray*} is an isomorphism on its image that is given for all $s\geq 4$ by
\[ \im(\mathcal{P}_{biLap_1}) =  H^{s-4}(M) \times H^{s-\frac{3}{2}}(\partial M)\times H^{s-\frac{1}{2}}(\partial M)\times H^{s-\frac{3}{2}}(\partial M)\times H^{s-\frac{1}{2}}(\partial M).\] Which concludes the proof of the lemma.
\end{proof}
Let us now show that $\Delta^2\w=0$ on $M$. We define the sets $D_\epsilon:=\{x\in M\ | \ d(x,\partial M)\geq \epsilon\}$ and  $D_{2\epsilon}:=\{x\in M\ | \ d(x,\partial M)\geq 2\epsilon\}$. There exists $\chi_\epsilon \in C^\infty(M, [0, 1])$ such that ${\chi_\epsilon}_{|D_{2\epsilon}} = 1$ and ${\chi_\epsilon}_{|(M \backslash D_\epsilon)} = 0$. Let $\w_\epsilon := \chi_\epsilon \Delta^2 \w \in \Omega^p(M)$ then ${\w_\epsilon}_{|(M \backslash D_\epsilon)} = 0$ thus, $\nu \lrcorner \w_\epsilon,\ \nu \lrcorner d\w_\epsilon$, $\nu\lrcorner\Delta \w_\epsilon$ and $\iota^* \w_\epsilon $ vanish on $\partial M$ since $\w_\epsilon$ vanishes on a neighbourhood of $\partial M$. Then $ \int_M \langle \Delta^2 \w, \chi_\epsilon \Delta^2 \w \rangle d\mu_g = 0 = \int_M \chi_\epsilon \lvert \Delta^2 \w \rvert^2 d\mu_g,$ hence $\chi_\epsilon \Delta^2 \w = 0.$ Thus, $\chi \Delta^2 \w = 0$, so $\Delta^2 \w = 0$ on $D_\epsilon$, and letting $\epsilon$ tend to $0$, we conclude that $\Delta^2 \w = 0$ on $M$.  
\tikzset{every picture/.style={line width=0.75pt}} 
 \begin{center}\tikzset{every picture/.style={line width=0.75pt}} 
\begin{tikzpicture}[x=0.75pt,y=0.75pt,yscale=-1,xscale=1]
\draw   (265.73,113.53) .. controls (265.73,98.3) and (278.04,85.96) .. (293.21,85.96) .. controls (308.39,85.96) and (320.69,98.3) .. (320.69,113.53) .. controls (320.69,128.75) and (308.39,141.1) .. (293.21,141.1) .. controls (278.04,141.1) and (265.73,128.75) .. (265.73,113.53) -- cycle ;
\draw  [color={rgb, 255:red, 74; green, 144; blue, 226 }  ,draw opacity=1 ] (232.09,113.53) .. controls (232.09,79.66) and (259.45,52.2) .. (293.21,52.2) .. controls (326.97,52.2) and (354.34,79.66) .. (354.34,113.53) .. controls (354.34,147.4) and (326.97,174.86) .. (293.21,174.86) .. controls (259.45,174.86) and (232.09,147.4) .. (232.09,113.53) -- cycle ;
\draw [color={rgb, 255:red, 208; green, 2; blue, 27 }  ,draw opacity=1 ]   (292.5,68.98) -- (292.55,54.89) ;
\draw [shift={(292.56,52.89)}, rotate = 90.21] [color={rgb, 255:red, 208; green, 2; blue, 27 }  ,draw opacity=1 ][line width=0.75]    (10.93,-3.29) .. controls (6.95,-1.4) and (3.31,-0.3) .. (0,0) .. controls (3.31,0.3) and (6.95,1.4) .. (10.93,3.29)   ;
\draw [color={rgb, 255:red, 208; green, 2; blue, 27 }  ,draw opacity=1 ]   (292.5,68.98) -- (292.44,83.57) ;
\draw [shift={(292.44,85.57)}, rotate = 270.21] [color={rgb, 255:red, 208; green, 2; blue, 27 }  ,draw opacity=1 ][line width=0.75]    (10.93,-3.29) .. controls (6.95,-1.4) and (3.31,-0.3) .. (0,0) .. controls (3.31,0.3) and (6.95,1.4) .. (10.93,3.29)   ;
\draw  [color={rgb, 255:red, 126; green, 211; blue, 33 }  ,draw opacity=1 ] (199.31,113.53) .. controls (199.31,61.49) and (241.35,19.31) .. (293.21,19.31) .. controls (345.07,19.31) and (387.12,61.49) .. (387.12,113.53) .. controls (387.12,165.57) and (345.07,207.75) .. (293.21,207.75) .. controls (241.35,207.75) and (199.31,165.57) .. (199.31,113.53) -- cycle ;
\draw [color={rgb, 255:red, 126; green, 211; blue, 33 }  ,draw opacity=1 ]   (377.26,70.04) -- (403.7,62.27) ;
\draw [color={rgb, 255:red, 208; green, 2; blue, 27 }  ,draw opacity=1 ]   (292.27,190.98) -- (292.14,177.53) ;
\draw [shift={(292.12,175.53)}, rotate = 89.41] [color={rgb, 255:red, 208; green, 2; blue, 27 }  ,draw opacity=1 ][line width=0.75]    (10.93,-3.29) .. controls (6.95,-1.4) and (3.31,-0.3) .. (0,0) .. controls (3.31,0.3) and (6.95,1.4) .. (10.93,3.29)   ;
\draw [color={rgb, 255:red, 208; green, 2; blue, 27 }  ,draw opacity=1 ]   (292.27,190.98) -- (292.42,204.92) ;
\draw [shift={(292.44,206.92)}, rotate = 269.41] [color={rgb, 255:red, 208; green, 2; blue, 27 }  ,draw opacity=1 ][line width=0.75]    (10.93,-3.29) .. controls (6.95,-1.4) and (3.31,-0.3) .. (0,0) .. controls (3.31,0.3) and (6.95,1.4) .. (10.93,3.29)   ;
\draw (322.01,120.06) node [anchor=north west][inner sep=0.75pt]   [align=left] {$\displaystyle D_{2\epsilon}$};
\draw (279.09,58.88) node [anchor=north west][inner sep=0.75pt]  [color={rgb, 255:red, 208; green, 2; blue, 27 }  ,opacity=1 ]  {$\epsilon $};
\draw (128.67,135) node [anchor=north west][inner sep=0.75pt]   [align=left] {$ $};
\draw (357.85,118.74) node [anchor=north west][inner sep=0.75pt]   [align=left] {$\displaystyle \textcolor[rgb]{0.29,0.56,0.89}{D_\epsilon}$};
\draw (395.09,116.05) node [anchor=north west][inner sep=0.75pt]   [align=left] {$\displaystyle \textcolor[rgb]{0.49,0.83,0.13}{M}$};
\draw (409.29,49.17) node [anchor=north west][inner sep=0.75pt]    {$\textcolor[rgb]{0.49,0.83,0.13}{\partial M}$};
\draw (276.83,177.64) node [anchor=north west][inner sep=0.75pt]  [color={rgb, 255:red, 208; green, 2; blue, 27 }  ,opacity=1 ]  {$\epsilon $};
\draw (275.21,105.76) node [anchor=north west][inner sep=0.75pt]  [rotate=-359.7]  {$\textcolor[rgb]{0.96,0.65,0.14}{{\textstyle \chi_\epsilon =1}}$};
\draw (320.49,29.29) node [anchor=north west][inner sep=0.75pt]  [rotate=-31.5]  {$\textcolor[rgb]{0.96,0.65,0.14}{\chi_\epsilon =0}$};
\end{tikzpicture}
\end{center}
Finally, by \lemref{lem:Bilap1} there exists $\w' \in H^2_{N}(M)$ such that 
\[\begin{cases}
\Delta^2 \w' = 0 & \text{on}\ M \\
  \nu\lrcorner \w' = 0 & \text{on } \partial M \\
 \nu\lrcorner d\w' = 0 & \text{on } \partial M \\
   \iota^*\delta\w'=\nu\lrcorner\Delta\w &  \text{on } \partial M \\
    \iota^*\w'= \nu\lrcorner d\Delta\w + \ell\ \iota^*\w  &  \text{on } \partial M.
\end{cases}\] 
Since $\w'\in H^2_{N}(M)$, then by \eqref{eq:preuveFonctionnelleBSN} we get $(\nu\lrcorner d\Delta\w+\ell\iota^*\w,\iota^*\w')_{L^2(\partial M)}+ (\nu\lrcorner \Delta\w,\iota^*\delta\w')_{L^2(\partial M)}=0,$ hence $\nu\lrcorner d\Delta\w+\ell\iota^*\w=0$ and $\nu\lrcorner\Delta\w=0$. Therefore, $\w$ is a smooth critical point of $\mathcal{Q}$ if and only if it satisfies:
\begin{equation*}
\begin{cases}
\Delta^2 \w = 0 & \text{on } M \\
\nu \lrcorner \w = 0 & \text{on } \partial M \\
\nu \lrcorner d \w = 0 & \text{on } \partial M \\
\nu\lrcorner\Delta \w = 0 & \text{on } \partial M \\
\nu \lrcorner d \Delta \w + \ell \iota^* \w = 0 & \text{on } \partial M,
\end{cases}
\end{equation*} for $\ell=\frac{\lVert \Delta\w\rVert^2_{L^2(M)}}{\lVert \iota^*\w\rVert^2_{L^2(\partial M)}}.$

\subsection{Ellipticity of BSN}\label{sec:BSNelliptic}
In this section, we establish the ellipticity of the BSN problem. Given that the bi-Laplacian ($\Delta^2$) is a fourth-order elliptic operator subject to boundary conditions defined by differential operators, we use the Shapiro-Lopatinskij approach. This allows us to verify the ellipticity of the boundary value problem and subsequently that the eigenfunctions (and the corresponding eigenspaces) possess the required regularity, ensuring they are smooth.
\begin{thm}
    Let $(M^n,g)$ be a compact Riemannian manifold with a smooth boundary $\partial M$. The BSN problem \eqref{eq:BSNF3} is elliptic in the sense of Shapiro-Lopatinskij.
\end{thm}

\begin{proof}
    Let $B_1,B_2,B_3$ et $B_4$ be the following maps
\begin{align*}
B_1: \Omega^p(M) &\longrightarrow \Gamma(E_1=\Lambda^{p-1}T^*\partial M) \quad ; & B_2: \Omega^p(M) &\longrightarrow \Gamma(E_2=\Lambda^{p}T^*\partial M) \\
\w &\longmapsto \nu\lrcorner\w & \w &\longmapsto \nu\lrcorner d\w \\ \\
B_3: \Omega^p(M) &\longrightarrow \Gamma(E_3=\Lambda^{p-1}T^*\partial M) \qquad ; & B_4: \Omega^p(M) &\longrightarrow \Gamma(E_4=\Lambda^{p}T^*\partial M) \\
\w &\longmapsto\nu\lrcorner\Delta\w& \w & \longmapsto \nu\lrcorner\Delta d\w +\ell\iota^*\w 
\end{align*} where $\ell$ is a fixed real number. We note that $E_1$ and $E_3$ have rank $\binom{n-1}{p-1}$ and that $E_2$ and $E_4$ have rank $\binom{n-1}{p}$. Therefore, the rank of $\bigoplus_{j=1}^4 E_j$ is $2\big(\binom{n-1}{p} + \binom{n-1}{p-1}\big) = 2\binom{n}{p} = \dim \mathcal{M}_{\Delta^2,v}^+$. We have to compute the principal symbols of the $B_j$ for $1 \leq j \leq 4$.\\ Let us fix $\xi\in T^*M$ and let $f:M\longrightarrow\mathbb{R}$ be such that $d_xf=\xi$. The operator $B_1$ is of order 0, so its principal symbol is $B_1$ itself: $\sigma_{B_1}(\xi)\w = \nu \lrcorner \w$. The operator $B_2$ is of order 1, recall that $\xi$ has both a normal and a tangential component, i.e., $\xi=\xi_\nu\nu^{\flat}+\iota^*\xi$ for all point in $\partial M$, we have then $\sigma_{B_2}(\xi)\w = \xi_\nu\iota^*\w - \iota^*\xi\wedge(\nu\lrcorner\w).$ The operator $B_3$ is of order 2 and
 $\sigma_{B_3}(\xi)\w=\sigma_{\nu\lrcorner}\circ\sigma_\Delta(\xi)\w=-\nu\lrcorner\lvert\xi\rvert^2\w=-\lvert\xi\rvert^2\nu\lrcorner\w.$ \\ Finally, $B_4$ is of order $3$ and $\sigma_{B_4}(\xi)\w=\big(\sigma_{\nu\lrcorner}\circ\sigma_\Delta\circ\sigma_d\big)(\xi)\w$. We have $ \sigma_d(\xi)\w= \xi\wedge\w$, see \cite[p.181]{taylor}. Therefore $\sigma_{\Delta \circ d} = -|\xi|^2 \xi \wedge \w$. Since $\nu \lrcorner$ is of order 0, we get $\sigma_{B_4}(\xi)\w=-\lvert\xi\rvert^2\nu\lrcorner(\xi\wedge\w).$\\ We now need to evaluate all of the principal symbols for $\xi=(-iv+\partial_t\nu^\flat)$ with $v\in T^*M\setminus\{0\}$ and apply it to $y(t)=e^{-\lvert v\rvert t}(at+b)\w_0$. For $t=0$, we get: $\sigma_{B_1}(-iv+\partial_t\nu^\flat)(y)(0)=b\nu\lrcorner\w_0,$ $\sigma_{B_2}(-iv+\partial_t\nu^\flat)(y)(0)=\iota^*\w_0(a - b\lvert v\rvert) + ib\iota^*v\wedge(\nu\lrcorner\w_0)$, $\sigma_{B_3}(-iv+\partial_t\nu^\flat)(y)(0)=2a\lvert v \rvert\nu\lrcorner\w_0$ and $\sigma_{B_4}(-iv+\partial_t\nu^\flat)(y)(0) = 2 |v| a \left( iv \wedge (\nu \lrcorner \w_0) - |v| \iota^* \w_0 \right).$ We obtain the following map:
\begin{align*}
\Phi : \mathcal{M}^+_{\Delta^2,v} &\longrightarrow \bigoplus_{j=1}^4 E_j \\
y &\longmapsto \Phi(y)
\end{align*}
such that  \[\Phi(y)=\big( b\nu\lrcorner\w_0,(a-b\lvert v \rvert)\iota^*\w_0+ib\iota^*v\wedge(\nu\lrcorner\w_0),2a\lvert v \rvert \nu\lrcorner\w_0,2\lvert v\rvert \big(iav\wedge(\nu\lrcorner\w_0)-\lvert v\rvert a\iota^*\w_0\big)\big).\] To end the proof, we must show that $\Phi$ is an isomorphism. Since both source and target spaces have the same finite dimension, it is enough to show that $\Phi$ is injective. Suppose that $\Phi(y)=0,$ then
\begin{align*}
&\begin{cases}
  b\nu\lrcorner\w_0 &= 0 \qquad (1) \\
  (a-b|v|)\iota^*\w_0 + ib\iota^*v\wedge(\nu\lrcorner\w_0) &= 0  \qquad (2)\\
2a\lvert v \rvert \nu\lrcorner\w_0 &= 0  \qquad (3)\\
 2\lvert v\rvert \big(iav\wedge(\nu\lrcorner\w_0)-\lvert v\rvert a\iota^*\w_0\big)  &= 0  \qquad (4)
\end{cases}
\end{align*}
If $a$ and $b$ are zero, then $y$ is zero. If $a$ or $b$ are non-zero, then from (1) and (3), we deduce that $\nu\lrcorner\w_0 = 0$. Substituting this result into (2) and (4), we find that $\iota^*\w_0 = 0$. Consequently, $\w_0 = 0$, which implies that $y = 0$. Hence, $\Phi$ is injective and the problem \eqref{eq:BSNF3preuve} is elliptic.
\end{proof}

\subsection{Discrete spectrum of BSN}
In this section we prove \thref{thm:TheoremeGeneralBSNF3PourLaPreuve}. We start by defining the following spaces: 
 \[
{H}_{N,1}^2(M) := \{\w \in H^2(M) \ | \ \nu\lrcorner\w = 0, \ \nu\lrcorner d\w = 0\ \text{on} \ L^2(\partial M)  \ \text{and} \ \nu\lrcorner\Delta\w = 0 \ \text{in} \ H^{-\frac{1}{2}}(\partial M)\},
\] where $\nu\lrcorner\w$ is defined in $H^{\frac{3}{2}}(\partial M)$, $\nu\lrcorner d\w$ is defined in $H^{\frac{1}{2}}(\partial M)$ and $\nu\lrcorner\Delta\w$ is defined in $H^{-\frac{1}{2}}(\partial M)$ by \thref{th:trace}.
\[
\check{H}^2_{N,1}(M):=\{\w\in H_{N,1}^2(M)\ | \ \w\perp_{L^2(\partial M)}H_A^p(M)\},
\]
\[
Z_1 := \{\w \in \Omega^p(M)\cap {H}_{N,1}^2(M) \ |\ \Delta^2\w = 0 \ \text{on}\ M\},
\]and\[
\check{Z_1} := \{\w \in Z_1\ |\ \w \perp_{L^2(\partial M)} H_A^p(M)\}.
\] Let us define for all $ \w, \w' \in \Omega^p(M) $ the following bilinear forms:\begin{equation*}
\begin{aligned}
( \w, \w' )_{V} 
&:= \int_M \langle \Delta \w, \Delta \w' \rangle \, d\mu_g
\qquad\text{and} \qquad
( \w, \w' )_{W} 
:= \int_{\partial M} \langle \iota^* \w, \iota^* \w' \rangle \, d\mu_g.
\end{aligned}
\end{equation*}
\begin{lemma}\label{lem:psBSN}
    The forms $( \cdot, \cdot)_{V} $ and $ ( \cdot, \cdot )_{W} $ are scalar products on $ \check{Z_1} $ and $ Z_1 $, respectively.
\end{lemma}

\begin{proof}
  It can be shown that the forms defined above are bilinear, symmetric, and non-negative. Let us show that they are positive definite. For $ \w \in \check{H}^2_{N,1}(M) $, if $ ( \w, \w )_{V} = 0 $ then $ \Delta \w = 0 $ on $ M $. Thus, \eqref{eq:IPP3} gives that $ d\w $ and $ \delta \w $ are zero, and since $ \nu \lrcorner \w = 0 $ it follows that $ \w \in H_A^p(M) $. Moreover, since $ \w \in \check{Z_1} $ this implies that $\int_{\partial M} \langle \iota^* \w, \iota^* \w \rangle d\mu_g = 0$ by the definition of elements of $ \check{Z_1} $. Therefore, $\int_{\partial M} | \iota^* \w |^2 d\mu_g = 0,$
which means that $ \iota^* \w = 0 $. As a result, $ \w_{|\partial M} = 0 $, and because $\w$ is harmonic, we can conclude that $ \w = 0 $ on $ M $ according to \cite{ColetteAnne}.\\ 
Similarly, for $ \w \in Z_1 $, if $ ( \w, \w )_{W} = 0 $, then 
$\int_{\partial M} \lvert \iota^* \w \rvert^2 d\mu_g = 0,$
which implies that $ \iota^* \w $ is zero on $ \partial M $. Substituting into \eqref{eq:IPP2} with $ \w \in Z_1 $, we obtain $ \Delta \w = 0 $ on $ M $. Since $ \w_{|\partial M} = 0 $ we conclude again that $ \w = 0 $ on $ M $ according to \cite{ColetteAnne}.\\
\end{proof} 

We denote by $W$ the completion of $Z_1$ with respect to $( \w, \w')_{W}$, and by $V$ the completion of $\check{Z_1}$ with respect to $( \w, \w' )_{V}$.\\  We note that for any $ \w $ an $\ell$-eigenform of BSN and $ \w' \in Z_1 $, we have from equation \eqref{eq:IPP2} the following identity:
    \begin{equation}\label{eq:utiliserdanspreuveth}
        \int_M \langle \Delta \w, \Delta \w' \rangle \, d\mu_g = \ell \int_{\partial M}  \langle \iota^* \w, \iota^* \w' \rangle \, d\mu_g.
   \end{equation}
From \eqref{eq:IPP1} and \eqref{eq:IPP2} by taking $ \w \in Z_1 $ and $ \w' = \w $, we get 
  \begin{equation}\label{eq:integrationPourWdansZ3}
      \int_M \lvert \Delta\w  \rvert^2d\mu_g+\int_{\partial M} \langle \nu \lrcorner d \Delta\w, \iota^* \w\rangle d\mu_g =0.\end{equation}
\begin{proof}[Proof of  \thref{thm:TheoremeGeneralBSNF3PourLaPreuve}]
We use different intermediate results in this proof. \begin{lemma}  \label{lem:lpositiveBSN}
  The kernel of BSN coincides with $H_A^p(M)$. Moreover, if $\w$ is a solution of $BSN$ with $\ell\neq 0,$ then $\w\perp_{L^2(\partial M)}H_A^p(M)$ and $\ell >0.$
  \end{lemma}
  
  \begin{proof}
Let $\w$ be a solution of BSN with $\ell=0$. By integrating by parts using equation \eqref{eq:IPP2}, where we take $\w=\w'$ we obtain: 
    \begin{equation*}
\begin{split}
    & \int_M\langle \underbrace{\Delta^2 \w}_{0} , \w\rangle  d\mu_g  =  \int_M\langle \Delta\w , \Delta \w\rangle  d\mu_g \\
    & + \int_{\partial M} \Big( \langle \underbrace{\nu \lrcorner d \Delta\w}_{0}, \iota^* \w\rangle-\langle \iota^* \Delta\w, \underbrace{\nu \lrcorner d \w}_{0}\rangle  + \langle \underbrace{\nu \lrcorner \Delta\w}_{0}, \iota^*\delta \w\rangle-\langle \iota^* \delta \Delta\w, \underbrace{\nu \lrcorner \w}_{0}\rangle    \Big)d\mu_g,
\end{split}
\end{equation*} 
    thus $\Delta\w=0$. Substituting this into \eqref{eq:IPP3}, we have 
    \begin{equation*}
\begin{aligned}
   \int_M \langle \underbrace{\Delta \w}_{0}, \w \rangle \, d\mu_g &= \int_M \left( \lvert d\w \rvert^2 + \lvert \delta\w \rvert^2 \right) d\mu_g+ \int_{\partial M} \langle \underbrace{\nu \lrcorner d\w}_{0}, \iota^* \w \rangle \, d\mu_g \\
   &\quad - \int_{\partial M} \langle \iota^* \delta \w ,\underbrace{\nu \lrcorner \w}_{0} \rangle \, d\mu_g.
\end{aligned}
\end{equation*} 
    This implies that $d\w=0$ and $\delta\w=0$, so $\w$ lies in $H_A^p(M)$.\\
    Conversely, for $\w\in H_A^p(M)$, it is a solution of BSN for $\ell=0$ by definition of $H_A^p(M)$ \eqref{eq:Hap}. Now if $\w$ is a solution of the $BSN$, $\ell\neq 0$ and $\w\in Z_1$, we have by \eqref{eq:utiliserdanspreuveth} for all $\alpha\in H_A^p(M)$ that \[0=\int_M\langle\Delta\w,\Delta\alpha\rangle d\mu_g=\ell\int_{\partial M} \langle\iota^*\w,\iota^*\alpha\rangle d\mu_g.\] Since $\ell\neq 0$ we get that $\w$ is ${L^2(\partial M)}$-orthogonal to $H_A^p(M)$, therefore $\w\in\check{Z_1}\subset V$. Moreover, still by \eqref{eq:utiliserdanspreuveth} we get $\int_M\lvert\Delta\w\rvert^2 d\mu_g=\ell\int_{\partial M}\lvert\iota^*\w\rvert^2d\mu_g.$ Therefore $\ell >0,$ which concludes the proof.
\end{proof}

\begin{lemma}\label{lem:equivalence}
    The norms $ \lVert \cdot \rVert_{V} $ and $ \lVert \cdot \rVert_{H^2(M)} $ are equivalent on $ \check{H}^2_{N,1}(M) $. In particular they are equivalent on $\check{Z_1}$.
\end{lemma}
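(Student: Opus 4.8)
The plan is to prove the two inequalities separately. One direction is elementary: since $\Delta$ is a second-order differential operator with smooth coefficients on the compact manifold $M$, one has $\lVert \Delta \w \rVert_{L^2(M)} \leq C \lVert \w \rVert_{H^2(M)}$ for a constant $C$ depending only on $(M,g)$, which is exactly $\lVert \w \rVert_{V} \leq C \lVert \w \rVert_{H^2(M)}$.

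The substance is the reverse bound $\lVert \w \rVert_{H^2(M)} \leq C \lVert \Delta \w \rVert_{L^2(M)}$ on $\check{H}^2_{N,1}(M)$. I would first invoke the a priori elliptic estimate for the Hodge-de-Rham Laplacian under the absolute boundary conditions $\nu\lrcorner\w = 0$ and $\nu\lrcorner d\w = 0$. These conditions define precisely the Neumann boundary value problem \eqref{eq:neumannFormesAbsolue}, which is elliptic in the sense of Shapiro-Lopatinskij, so standard elliptic regularity theory yields
\[
\lVert \w \rVert_{H^2(M)} \leq C\big(\lVert \Delta \w \rVert_{L^2(M)} + \lVert \w \rVert_{L^2(M)}\big) \qquad \text{for all } \w \in H^2_{N,1}(M).
\]
It then remains only to absorb the lower-order term $\lVert \w \rVert_{L^2(M)}$ on the subspace $\check{H}^2_{N,1}(M)$.

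To do this I would argue by contradiction, using compactness. If no uniform bound held, there would exist a sequence $(\w_k)\subset \check{H}^2_{N,1}(M)$ with $\lVert \w_k \rVert_{H^2(M)} = 1$ and $\lVert \Delta \w_k \rVert_{L^2(M)} \to 0$. By the Rellich-Kondrachov theorem the inclusion $H^2(M)\hookrightarrow L^2(M)$ is compact, so after passing to a subsequence $\w_k \to \w_\infty$ in $L^2(M)$; feeding $\w_k - \w_j$ into the elliptic estimate above shows that $(\w_k)$ is then Cauchy in $H^2(M)$, hence $\w_k \to \w_\infty$ in $H^2(M)$ with $\lVert \w_\infty \rVert_{H^2(M)} = 1$ and $\Delta \w_\infty = 0$. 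Because the traces $\w\mapsto\nu\lrcorner\w$, $\w\mapsto\nu\lrcorner d\w$ and $\w\mapsto\iota^*\w$ are continuous on $H^2(M)$, the defining conditions of $\check{H}^2_{N,1}(M)$ (including the orthogonality $\w\perp_{L^2(\partial M)}H_A^p(M)$) are closed, so $\w_\infty \in \check{H}^2_{N,1}(M)$.

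It remains to identify $\w_\infty$. Applying \eqref{eq:IPP3} to the harmonic form $\w_\infty$ and using $\nu\lrcorner\w_\infty = 0$ and $\nu\lrcorner d\w_\infty = 0$ makes both boundary terms vanish, giving $\lVert d\w_\infty\rVert_{L^2(M)}^2 + \lVert\delta\w_\infty\rVert_{L^2(M)}^2 = 0$; hence $d\w_\infty = \delta\w_\infty = 0$ and, together with $\nu\lrcorner\w_\infty = 0$, this means $\w_\infty \in H_A^p(M)$. But $\w_\infty \in \check{H}^2_{N,1}(M)$ also forces $\w_\infty\perp_{L^2(\partial M)}H_A^p(M)$, so testing against $\w_\infty$ itself gives $\iota^*\w_\infty = 0$; combined with $\nu\lrcorner\w_\infty = 0$ this yields $\w_{\infty|\partial M} = 0$, and the unique continuation result of \cite{ColetteAnne} for harmonic forms vanishing on the boundary gives $\w_\infty = 0$, contradicting $\lVert\w_\infty\rVert_{H^2(M)} = 1$. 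The equivalence on $\check{Z}_1$ is then the restriction of this statement, and $V_1\subset\check{H}^2_{N,1}(M)$ follows since $V_1$ is the $\lVert\cdot\rVert_V$-completion of $\check{Z}_1$, which by the equivalence coincides with the $H^2$-closure of $\check{Z}_1$, and $\check{H}^2_{N,1}(M)$ is closed in $H^2(M)$. The main obstacle is precisely the absorption of the lower-order term: this is where the condition $\w\perp_{L^2(\partial M)}H_A^p(M)$ is indispensable, since otherwise the harmonic limit $\w_\infty$ could be a nonzero element of $H_A^p(M)$ and coercivity would fail.
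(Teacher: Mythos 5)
Your proof is correct, but it follows a genuinely different route from the paper's. You establish the coercive bound $\lVert \w \rVert_{H^2(M)} \leq C\lVert \Delta\w \rVert_{L^2(M)}$ by combining the a priori elliptic estimate $\lVert \w \rVert_{H^2(M)} \leq C\bigl(\lVert \Delta\w \rVert_{L^2(M)} + \lVert \w \rVert_{L^2(M)}\bigr)$ for the absolute boundary conditions with a compactness--contradiction argument (a Peetre-type lemma) that absorbs the lower-order term; the paper instead proves that $\Delta_{|\check{H}_{N,1}^2(M)} : \check{H}_{N,1}^2(M) \to \check{L}^2(M)$ is a bijection via the Lions--Magenes theorem (\thref{theo:thm5.4}) and then reads off the bound from the continuity of the inverse. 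Both arguments rest on the same two ingredients --- the Shapiro--Lopatinskij ellipticity of the Neumann problem for $\Delta$ (which the paper verifies in \lemref{lm:SteklovFormeGénéraleCorrigée}, so your citation of the a priori estimate is legitimate) and the identification of the kernel: a harmonic form in $\check{H}^2_{N,1}(M)$ lies in $H_A^p(M)$, the boundary orthogonality then kills $\iota^*\w$, and the unique continuation result of \cite{ColetteAnne} finishes; your treatment of this step coincides with the paper's (see \lemref{lem:ps}). What your approach buys is that it avoids the surjectivity of $\Delta$ onto $\check{L}^2(M)$ and the duality bookkeeping ($\mathcal{N}^*$, the operators $T_j$, $C_j$) needed to identify $\im(\mathcal{P}_{\Delta_{Neu}})$, at the price of invoking the a priori estimate as a black box; the paper's approach is heavier here but reuses machinery it needs anyway (the isomorphism $\Delta_{|\check{H}_{N,1}^2(M)}$ reappears in the analysis of \lemref{lem:SteklovFormeGénéral}). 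Your closing observations --- that the orthogonality to $H_A^p(M)$ is exactly what prevents the harmonic limit from being a nonzero element of the kernel, and that $V_1\subset\check{H}^2_{N,1}(M)$ follows from the closedness of $\check{H}^2_{N,1}(M)$ in $H^2(M)$ together with the norm equivalence --- are both correct and consistent with the paper.
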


\begin{proof}
    As $ \Delta $ is a second-order differential operator, there exists $ c > 0 $ such that for all $ \w \in \check{Z_1} $, $ \lVert \w \rVert_{V} \leq c \lVert \w \rVert_{H^2(M)} $. It remains to show that for all $ \w \in \check{Z_1} $, $ \lVert \w \rVert_{H^2(M)} \leq c \lVert \w \rVert_{V} $. To do so, we first show that the space $ H_{N,1}^2(M) $ is closed in $ H^2(M) $. Indeed, this is the case since $ H_{N,1}^2(M)$ is the kernel of the map \begin{align*}
        J\colon H^2(M) &\longrightarrow L^2(\partial M) \oplus L^2(\partial M)\oplus H^{-\frac{1}{2}}(\partial M) \\
        \w &\longmapsto (\nu \lrcorner d\w, \nu \lrcorner \w, \nu\lrcorner\Delta\w),
    \end{align*} which is continuous by the trace theorem. We have by definition 
\[
H_{N,1}^2(M) = \check{H}^2_{N,1}(M) \oplus H_A^p(M),
\]
and we consider the space \[
\check{L}^2(M) := \left\{ \w \in L^2(M) \ | \ \forall \alpha \in H_A^p(M), \ \int_{M} \langle \alpha, \w \rangle \, d\mu_g = 0 \right\}.
\] We first prove the following lemma: 
\begin{lemma}\label{lm:SteklovFormeGénéraleCorrigée}
    For all $s\geq 2,$ the {boundary-value problem} \begin{equation}\label{eq:SteklovFormeGénéraleCorrigée}
(\Delta_{Neu}) \begin{cases}
    \Delta \w = \w_0 & \text{on } M \\
    \nu \lrcorner \w = \w_1 & \text{on } \partial M \\
    \nu \lrcorner d\w = \w_2 & \text{on } \partial M.
\end{cases} 
\end{equation}
 has a solution $\w$ if and only if $\w_0,\ \w_1$ and $\w_2$ lie in $\im(\mathcal{P}_{\Delta_{Neu}}: H^s(M) \rightarrow H^{s-2}(M)\times H^{s-\frac{1}{2}}(\partial M)\times H^{s-\frac{3}{2}}(\partial M))$. In this case the set of solutions is $\{\w+H_A^p(M)\}.$
Equivalently, the map  \begin{eqnarray*}
\mathcal{P}_{\Delta_{Neu}}: H^s(M)/\mathcal{N}&{\longrightarrow} &H^{s-2}(M)\times H^{s-\frac{1}{2}}(\partial M)\times H^{s-\frac{3}{2}}(\partial M)\\
  \left[\w\right] & \longmapsto & (\Delta\w,\nu\lrcorner\w,\nu\lrcorner d\w) 
    \end{eqnarray*} is a topological isomorphism onto its image which is \[
\im(\mathcal{P}_{\Delta_{Neu}}) = \{ ({\w_0}, \w_1, \w_2) \in H^{s-2}(M)\times H^{s-\frac{1}{2}}(\partial M)\times H^{s-\frac{3}{2}}(\partial M) \ | \ \forall \w' \in \mathcal{N}^*, ({\w_0}, \w')+(\w_2,\iota^*\w')=0 \}, 
\] here $\mathcal{N}$ and $\mathcal{N}^*$ are equal to $H_A^p(M).$
\end{lemma}
\begin{proof}
\underline{\textbf{Step 1 :}} It can be shown that the problem \eqref{eq:SteklovFormeGénéraleCorrigée} is elliptic in the sense of Shapiro-Lopatinskij, as in \secref{sec:BSNelliptic}.

   \underline{\textbf{Step 2 :}} By \thref{theo:thm5.4} we have hence that $\mathcal{P}_{\Delta_{Neu}}$ is a topological isomorphism on its image so it is invertible and its inverse is continuous. In this case  $\mathcal{N}=\mathcal{N}^*=H_A^p(M)$ (see \secref{sec:EllipticiteConvFaible} for their definitions). Indeed, it is clear by the reformulation of the formula \eqref{eq:IPP1} as it is presented in \eqref{eq:greenLionsMagenes}, for $\w$ and $\w'$ solutions of \eqref{eq:SteklovFormeGénéraleCorrigée} we have :
    \begin{equation*}
\begin{split}
    & \int_M\Big( \langle \Delta \w , \w'\rangle -  \langle \w , \Delta \w'\rangle \Big) d\mu_g \\
   =  & \int_{\partial M} \Big( \langle \underbrace{\nu \lrcorner \w}_{B_1\w}, \underbrace{\iota^*\delta \w'}_{-T_1\w'}\rangle +\langle \underbrace{\nu \lrcorner d \w}_{B_2\w}, \underbrace{\iota^* \w'}_{-T_2\w'}\rangle-\langle \underbrace{\iota^* \w}_{S_1\w}, \underbrace{\nu \lrcorner d \w'}_{-C_1\w'}\rangle  -\langle \underbrace{\iota^* \delta \w}_{S_2\w}, \underbrace{\nu \lrcorner \w'}_{-C_2\w'}\rangle  \Big)d\mu_g.
\end{split}
\end{equation*}We have in this case \begin{equation}\label{eq:N*}
     \mathcal{N}^*=\{\w\in \Omega^p(M)\ |\ \Delta\w=0,\  C_1\w=0\ \text{and}\  C_2\w=0\}=H_A^p(M),
 \end{equation} since $C_1\w=\nu\lrcorner d\w$ and $C_2\w=\nu\lrcorner\w$ by \secref{sec:EllipticiteConvFaible}. In fact, $({\w_0},\w_1,\w_2)\in\im(\mathcal{P}_{\Delta_{Neu}})$ if and only if for all $\alpha\in H_A^p(M)$ we have
$
({\w_0},\alpha)-(\w_1,\underbrace{\iota^*\delta\alpha}_{0})-(\w_2,\iota^*\alpha)=0,
$ i.e., $(\Delta\w,\alpha)=(\w_2,\alpha).$
Thus, $(0, \w_1, \w_2) \in \im(\mathcal{P}_{\Delta_{Neu}})$ if and only if for every $\alpha \in H_A^p(M)$, $\int_{\partial M}\langle \w_2, \iota^* \alpha \rangle d\mu_g = 0.$ Therefore, we must choose $\w_2 \in {{H}_A^p}^{\perp_{L^2(\partial M)}}(M):= \{\theta \in H^{-\frac{3}{2}}(\partial M) \ | \ \forall \alpha\in H_A^p(M),\ ( \theta, \iota^* \alpha )= 0\}$ to get the result. Consequently, we have the inclusion 
\[
\{0\} \times H^{-\frac{1}{2}}(\partial M) \times {{H}_A^p}^{\perp_{L^2(\partial M)}}(M) \subset \im(\mathcal{P}_{\Delta_{Neu}}).
\] 
\end{proof}

 \begin{lemma}
     The following map 
$
\Delta_{|\check{H}^2_{N}(M)} : \check{H}^2_{N}(M) \longrightarrow \check{L}^2(M),
$ is an isomorphism.
 \end{lemma}
\begin{proof}
    This map is well-defined. Indeed, doing an integration by parts \eqref{eq:IPP3'} we get $ \Delta \w \in \check{L}^2(M) $ for all $ \w \in \check{H}^2_{N}(M) $. Now let $\w \in \check{H}^2_{N}(M)$ such that $\Delta\w = 0$, using integration by parts \eqref{eq:IPP3'} we obtain that $d\w$ and $\delta\w$ are zero. Additionally, since $\nu \lrcorner \w = 0$, we conclude that $\w \in H_A^p(M)$. Thus, $\iota^*\w = 0$ as $\w\in H_A^p(M)$ and $\w\perp_{L^2(\partial M)} H_A^p(M)$. Therefore, $\w=0$ by \cite{ColetteAnne}, so $\ker \Delta_{|{\check{H}^2_{N}(M)}} = \{0\}$. Hence, the operator $\Delta_{|\check{H}^2_{N}(M)} : \check{H}^2_{N}(M) \longrightarrow \check{L}^2(M)$ is injective.\\ In fact, it is an isomorphism, as follows from \lemref{lm:SteklovFormeGénéraleCorrigée}, which states that for $\mathcal{N}=H_A^p(M):$
\begin{eqnarray*}
\mathcal{P}_{\Delta_{Neu}}:H^2_{N}(M)/H_A^p(M)&\longrightarrow & \check{L}^2(M)\oplus H^\frac{3}{2}(\partial M)\oplus H^\frac{1}{2}(\partial M)\\
  \left[\w\right]& \longmapsto & (\Delta\w,\nu\lrcorner\w,\nu\lrcorner d\w)  \notag 
\end{eqnarray*}
is a topological isomorphism onto its image, which is given by
\begin{align*}
\im(\mathcal{P}_{\Delta_{Neu}}) = \Big\{ (\hat{\w},\w_1,\w_2) \in L^{2}(M) \times H^{-\frac{1}{2}}(\partial M) 
&\times H^{-\frac{3}{2}}(\partial M) \ \Big| \ \forall \w' \in \mathcal{N}^*, \\
(\hat{\w},\w') &+ \sum_{j=1}^2 (\w_j, T_j\w') = 0 \Big\},
\end{align*}see \secref{sec:EllipticiteConvFaible} for the notations. Note that $\check{L}^2(M)\oplus \{0\}\oplus \{0\}\subset \im{\mathcal{P}}$. 
By \lemref{lm:SteklovFormeGénéraleCorrigée} we have that $(\hat{\w},0,0)\in\im(\mathcal{P})$ if and only if for all $\alpha\in H_A^p(M)$, $(\hat{\w},\alpha)_{L^2(M)}=0$. Therefore, $\hat{\w}\in {H_A^p(M)}^{\perp_{(\cdot,\cdot)_{L^2(M)}}}$. We deduce that the set $\check{L}^2(M)\times \{0\}\times \{0\}$ is included in $\im(\mathcal{P})$. 
\end{proof}Now let
\begin{eqnarray*}
\mathcal{L}:=({\mathcal{P}_{\Delta_{Neu}}})_{|\check{L}^2(M)\oplus \{0\}\oplus \{0\}}^{-1}:\check{L}^2(M)\oplus \{0\}\oplus \{0\}&\longrightarrow &H^2_{N}(M)/H_A^p(M)\\
(\hat{\w},0,0) & \longmapsto &  \left[\w\right]  \notag 
\end{eqnarray*}
be a map such that $\Delta\w=\hat{\w}$ on $M$, $\nu\lrcorner\w=0$ and $\nu\lrcorner d\w=0$ on $\partial M$. Then by \thref{theo:thm5.4} the map $\mathcal{L}$ is continous and for all $\hat{\w}\in \check{L}^2(M)$, there exists $c>0$ such that
\[
\lVert \mathcal{L}(\hat{\w}) \rVert _{{H}^2(M)}\leq c \lVert \hat{\w} \rVert _{L^2(M)},
\]
therefore, for all $\w\in \check{H}^2_{N}(M)$, there exists $c'>0$ such that
\[
\lVert \w \rVert _{H^2(M)}\leq c' \lVert \Delta\w \rVert _{L^2(M)}.
\]
Since $\check{Z_1}\subset\check{H}^2_{N,1}(M) \subset \check{H}^2_{N}(M)$, the map $
\Delta_{|\check{H}_{N,1}^2(M)} : \check{H}_{N,1}^2(M) \longrightarrow \Delta(\check{H}_{N,1}^2(M))
$ is an isomorphism, so there exists $c'>0$ such that for all $\w\in \check{Z_1}$,
\[
\lVert \w \rVert _{H^2(M)}\leq c'\lVert \Delta\w \rVert _{L^2(M)}=c' \lVert \w \rVert_{V}.
\]
Thus, the norms $\lVert\cdot\rVert_{V}$ and $\lVert\cdot\rVert_{H^2(M)}$ are equivalent on $\check{H}^2_{N,1}(M)$ and therefore on $\check{Z_1}$, which completes the proof of \lemref{lem:equivalence}.

\end{proof}
\begin{lemma}\label{lem:W3<V}
    There exists $c>0$ such that $\lVert\cdot\rVert_{W} \leq c\lVert\cdot\rVert_{V}$ on $\check{Z_1}.$
\end{lemma}

\begin{proof}
   Recall that $\lVert\w\rVert^2_{H^1(\partial M)}=\lVert\w\rVert^2_{L^2(\partial M)}+\lVert\nabla\w\rVert^2_{L^2(\partial M)}$. For all $\w\in\check{Z_1}$, we have by definition $\lVert\w\rVert^2_{W}=\lVert\iota^*\w\rVert^2_{L^2(\partial M)}.$ Moreover, \[\lVert\iota^*\w\rVert^2_{L^2(\partial M)}=\lVert\w\rVert^2_{L^2(\partial M)}\leq \lVert\w\rVert^2_{H^1(\partial M)}.\] Furthermore, by continuity of the trace operator $T:H^2(M)\longrightarrow H^1(\partial M)$, there exists $c>0$ such that $\lVert \w \rVert^2_{W}\leq  \lVert \w \rVert^2_{H^1(\partial M)}\leq c \lVert \w \rVert^2_{H^2(M)}.$ Finally, using the equivalence of the norms $\lVert \cdot \rVert_{V}$ and $\lVert \cdot \rVert_{H^2(M)}$, we obtain $\lVert \w \rVert^2_{W}\leq c \lVert \w \rVert_{V}^2$ for some constant $c.$
\end{proof}
From \lemref{lem:W3<V} and since by definition $\check{Z_1}$ is dense in both $V$ and $W$, the inclusion map $i_{W}:\check{Z_1}\longrightarrow W$, which is linear and continuous, extends to another continuous ``inclusion" map $I$ denoted as  
\begin{eqnarray}\label{eq:applicationI}
  I:V&\longrightarrow &W\\
  \w& \longmapsto &\w  \notag 
\end{eqnarray} 
such that $i_{W}=I\circ i_{V}$, where $i_{V}: \check{Z_1}\longrightarrow V.$
\begin{lemma}\label{lem:I3compact}
    The map $I$ is compact.
\end{lemma}
\begin{proof}
Let $J$ be the following map: 
\begin{eqnarray*}
J: Z_1 & \longrightarrow & L^2(\partial M) \nonumber \\
\w & \longmapsto &  \iota^* \w 
\end{eqnarray*}

Its extension to $W$ exists and is an isometry by definition of $\lVert\cdot\rVert_{W}$. We prove the compactness of $I$ via the compactness of the map $J\circ I$:
\[
\begin{array}{cccccccc}
J\circ I:&V & \overset{I}{\longrightarrow} & W & \overset{J}{\longrightarrow} & L^2(\partial M) \\
&\w & \longmapsto &  \w & \longmapsto & \iota^*\w
\end{array}
\]
First, note that $V$ is included in $H^2(M)$ by the equivalence of norms. According to  \lemref{lem:equivalence}, and since $H^2(M)\subset H^1(M)$, the map $C:V{\longrightarrow} H^1(M);\ \w\mapsto \w$ is continuous. Moreover, we have that $H^1(M)\subset H^\frac{1}{2}(\partial M)$ due to the trace theorem, and the inclusion map $H^\frac{1}{2}(\partial M)\subset L^2(\partial M)$ is compact by the Rellich-Kondrachov theorem. Hence, $J\circ I$ can be represented as follows:

{\small
\[
\begin{array}{ccccccc}
V & {\longrightarrow} & H^1(M) & {\longrightarrow} &  H^\frac{1}{2}(\partial M)& {\longrightarrow} &  L^2(\partial M)\\
\w & \longmapsto & \w & \longmapsto & \iota^*\w& \longmapsto & \iota^*\w
\end{array}
\]
}
Thus, $J\circ I$ is compact as it is composed of continuous maps, one of which is compact. To deduce that $I$ is compact, we require the following general lemma.
\begin{lemma}\label{lem:I3=I2oI1compacte}
    Let $E, E'$ and $E''$ be three Banach spaces. Let $F:E\longrightarrow E''$ be a compact map, $G:E'\longrightarrow E''$ an isometry and $H:E\longrightarrow E'$ a map such that $F=G\circ H$. Then $H$ is compact.
\end{lemma}

\begin{proof}
     Let $(v_n)_n$ be a bounded sequence in $E$. By the compactness of $F$, there exists a subsequence $(v_{k_n})_n$ such that $F(v_{k_n}) = G(H(v_{k_n}))$ converges in $E''$. Since $G: E' \rightarrow E''$ is an isometry then $G(E')$ is a complete set in $E''$, consequently, $G(E')$ is closed in $E''$. Thus, the sequence $G(H(v_{k_n}))$ converges to $G(x)$ for some $x \in E'$. Due to the isometry property of $G$, we have $H(v_{k_n}) \rightarrow x$ in $E'$. Therefore, we can deduce that the map $H$ is compact.
\end{proof}
Consequently, $I$ is compact according to  \lemref{lem:I3=I2oI1compacte}.
\end{proof}
We provide the definition of a weak solution adapted to our situation.
\begin{definition}\label{def:SolFaibleBSN}
  For $f\in L^2(M)$, a weak solution $\w$ of $\Delta^2\w=f$ on $M$ is a form $\w\in H_{N,1}^2(M)$ such that 
  \begin{equation}\label{eq:solfaible}
    (\Delta\w,\Delta\w')_{L^2(M)}=(f,\w')_{L^2(M)}, \ \forall \w'\in \{ \alpha \in H_0^1(M) \cap H^2(M) \mid \nu \lrcorner d\alpha = 0 \text{ sur } \partial M \}.
\end{equation}
\end{definition}
A general definition of the weak solution in an open subset of $\mathbb{R}^n$ is given in \cite[Sections 6.1.1 and 6.1.2]{Evans}.
\begin{lemma}\label{lem:solfaible} 
We have the following equality 
\[ V\subset\{\w\in \check{H}^2_{N,1}(M) \ |\ \Delta^2 \w=0\ \text{weakly in}\ M\}.\]
\end{lemma}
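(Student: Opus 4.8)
The plan is to prove the two inclusions separately, writing $\mathcal{W}:=\{\w\in\check{H}^2_{N,1}(M)\mid \Delta^2\w=0\text{ weakly in }M\}$ for the right-hand side, and to use throughout the equivalence of $\lVert\cdot\rVert_{V}$ and $\lVert\cdot\rVert_{H^2(M)}$ from \lemref{lem:equivalence}, so that $V_1$ is exactly the $H^2$-closure of $\check{Z}_1$ inside $\check{H}^2_{N,1}(M)$.

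For the inclusion $V_1\subseteq\mathcal{W}$, I would first check that $\check{Z}_1\subseteq\mathcal{W}$: every $\phi\in\check{Z}_1$ is a smooth solution of $\Delta^2\phi=0$, and applying \eqref{eq:IPP2} with an arbitrary $\w'\in H^2_0(M)$ all four boundary integrands vanish, since $\w'$ and $\nabla\w'$ vanish on $\partial M$ and therefore $\iota^*\w'=\nu\lrcorner\w'=\nu\lrcorner d\w'=\iota^*\delta\w'=0$. Hence $(\Delta\phi,\Delta\w')_{L^2(M)}=(\Delta^2\phi,\w')_{L^2(M)}=0$, i.e. $\phi$ is a weak solution in the sense of \defref{def:SolFaibleBSN1}. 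Since $\mathcal{W}$ is the intersection of $\check{H}^2_{N,1}(M)$ with the $H^2$-closed conditions $(\Delta\,\cdot\,,\Delta\w')_{L^2(M)}=0$ for all $\w'\in H^2_0(M)$, it is $H^2$-closed, and therefore contains $V_1=\overline{\check{Z}_1}^{\,H^2}$.

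For the reverse inclusion $\mathcal{W}\subseteq V_1$, I would argue constructively. Given $v\in\mathcal{W}$, I solve the problem $(biLap_1)$ of \lemref{lem:Bilap1} with $f=0$, $\nu\lrcorner\,\cdot=0$, $\nu\lrcorner d\,\cdot=0$ and tangential data $\iota^*\phi=\iota^*v$, $\iota^*\delta\phi=\iota^*\delta v$. As $v\in H^2(M)$, these traces lie in $H^{3/2}(\partial M)$ and $H^{1/2}(\partial M)$, so \thref{theo:thm7.4} applied at $s=2$ yields a unique solution $\phi\in H^2(M)$. Approximating the data by smooth forms orthogonal to $H_A^p(M)$ in $L^2(\partial M)$, the corresponding solutions are smooth elements of $\check{Z}_1$ (by the regularity part of \lemref{lem:Bilap1}), and they converge to $\phi$ in $\lVert\cdot\rVert_V$ by continuity of the solution operator together with \lemref{lem:equivalence}; hence $\phi\in V_1$. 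Orthogonality of $\phi$ to $H_A^p(M)$ in $L^2(\partial M)$ follows from $\iota^*\phi=\iota^*v$ and $v\in\check{H}^2_{N,1}(M)$, using that forms in $H_A^p(M)$ have vanishing normal part.

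It remains to show $\eta:=v-\phi$ vanishes. The crucial point is that $\eta$ satisfies $\iota^*\eta=0$, $\nu\lrcorner\eta=0$, $\nu\lrcorner d\eta=0$ and $\iota^*\delta\eta=0$ on $\partial M$, and that these four homogeneous trace conditions force $\eta\in H^2_0(M)$. Indeed $\iota^*\eta=\nu\lrcorner\eta=0$ give $\eta_{|\partial M}=0$, so every tangential covariant derivative of $\eta$ vanishes on $\partial M$; the local formulas $d\eta=\sum_i e_i^\flat\wedge\nabla_{e_i}\eta$ and $\delta\eta=-\sum_i e_i\lrcorner\nabla_{e_i}\eta$ then reduce on $\partial M$ to $d\eta=\nu^\flat\wedge\nabla_\nu\eta$ and $\delta\eta=-\nu\lrcorner\nabla_\nu\eta$. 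Decomposing $\nabla_\nu\eta$ into its tangential and normal components, the condition $\nu\lrcorner d\eta=0$ kills the tangential component while $\iota^*\delta\eta=0$ kills the normal one, so $\nabla_\nu\eta=0$ and hence $(\nabla\eta)_{|\partial M}=0$, i.e. $\eta\in H^2_0(M)$. Since $\phi\in V_1\subseteq\mathcal{W}$ by the first inclusion, $\eta=v-\phi$ is again a weak solution, so I may test \eqref{eq:solfaible} with $\w'=\eta$ itself to obtain $\lVert\Delta\eta\rVert_{L^2(M)}^2=0$; then $\Delta\eta=0$ with $\eta_{|\partial M}=0$ gives $\eta=0$ by the unique continuation of \cite{ColetteAnne}. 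Therefore $v=\phi\in V_1$. I expect the main obstacle to be precisely this boundary analysis: securing the borderline $s=2$ solvability of $(biLap_1)$ that places $\phi$ in $V_1$, and carrying out the jet computation that upgrades the four trace conditions to membership in $H^2_0(M)$.
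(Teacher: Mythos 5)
Your proof is correct, and for the inclusion $V_1\subseteq\mathcal{W}$ it coincides with the paper's argument: approximate $\w\in V_1$ by a sequence in $\check{Z}_1$, upgrade $V$-convergence to $H^2$-convergence via \lemref{lem:equivalence}, and pass to the limit in \eqref{eq:IPP2} tested against $\w'\in H^2_0(M)$. For the reverse inclusion, however, you take a genuinely different (and more substantial) route. The paper disposes of it in one line — ``since the norms are equivalent on $\check{H}^2_{N,1}(M)$, we get that $\Delta^2\w=0$ on $M$'' — which asserts rather than proves that a weak solution in $\check{H}^2_{N,1}(M)$ is a $V$-limit of elements of $\check{Z}_1$. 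You instead argue constructively: solve $(biLap_1)$ with the tangential traces $\iota^*v$, $\iota^*\delta v$ of the given weak solution $v$ to produce $\phi\in V_1$ (approximating the boundary data by smooth forms and using continuity of the solution operator from \thref{theo:thm7.4}), then show that $\eta=v-\phi$ has all four vanishing traces, hence lies in $H^2_0(M)$ by the jet computation, and finally test the weak formulation with $\w'=\eta$ to kill $\Delta\eta$ and invoke \cite{ColetteAnne}. This is exactly the density mechanism the paper itself deploys elsewhere (the trace computation of \lemref{lem:i3injective} and the approximation scheme of \lemref{lem:Hn=V+H0}), so your argument is fully consistent with the paper's toolkit while actually closing the gap the paper leaves open. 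The only point worth one more sentence in your write-up is the approximation step: to keep the approximating solutions in $\check{Z}_1$ rather than merely in $Z_1$, you should project the smooth approximants of the boundary data onto the $L^2(\partial M)$-orthogonal complement of the finite-dimensional space $\iota^*H_A^p(M)$, which preserves smoothness and does not affect convergence.
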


\begin{proof}
We know by  \lemref{lem:equivalence} that $\check{Z_1}\subset\check{H}^2_{N,1}(M)\subset H_{N,1}^2(M).$ Let us fix $\w\in V$. There exists a sequence $(\w_m)_m$ in $\check{Z_1}$ such that $\lVert \w_m-\w\rVert_{V}\underset{{m \to \infty}} {{\longrightarrow}}0$. By the equivalence of the norms $\lVert \cdot \rVert_{V}$ and $\lVert \cdot \rVert_{H^2(M)}$, on $V$ the sequence $(\w_m)_m$ converges in $H^2(M)$ to $\w$. Hence, $(\Delta \w_m)_m$ converges in $L^2(M)$ to $\Delta \w$. Performing an integration by parts \eqref{eq:IPP2}, for $\w'\in \{ \alpha \in H_0^1(M) \cap H^2(M) \mid \nu \lrcorner d\alpha = 0 \text{ sur } \partial M \}$, $\w_m\in \check{Z_1}$ we obtain $(\Delta \w_m, \Delta \w')_{L^2(M)}=0$. Since $(\Delta \w_m)_m$ converges in $L^2(M)$ to $\Delta \w$, it follows that 
$(\Delta \w, \Delta \w')_{L^2(M)}=0$ for all $\w'\in \{ \alpha \in H_0^1(M) \cap H^2(M) \mid \nu \lrcorner d\alpha = 0 \text{ sur } \partial M \}$. Thus, $\w\in H_{N,1}^2(M)$ and $\Delta^2\w=0$ weakly in $M$, proving the first inclusion.\\
\end{proof}
\begin{remark}
    Actually we have \[ V=\{\w\in \check{H}^2_{N,1}(M) \ |\ \Delta^2 \w=0\ \text{weakly in}\ M\},\] which follows from \lemref{lm:directsum}.
\end{remark}
\begin{lemma}\label{lem:i3injective}
    The map $I$ is injective.
\end{lemma}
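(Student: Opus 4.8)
The plan is to show that the compact "inclusion" map $I_1 : V_1 \longrightarrow W_1$ has trivial kernel, which by the preceding construction amounts to saying that an element of $V_1$ whose image in $W_1$ is zero must already be zero. Recall that $V_1$ is the completion of $\check{Z}_1$ for the norm $\lVert\cdot\rVert_V$ and $W_1$ the completion for $\lVert\cdot\rVert_{W_1}$, and that by \lemref{lem:solfaible} an element $\w\in V_1$ is exactly a form in $\check{H}^2_{N,1}(M)$ satisfying $\Delta^2\w=0$ weakly. The map $I_1$ sends such $\w$ to its class for the $W_1$-structure, whose squared norm is $\int_{\partial M}\big(|\iota^*\w|^2+|\iota^*\delta\w|^2\big)\,d\mu_g$. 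So the whole point is: \emph{if $\iota^*\w=0$ and $\iota^*\delta\w=0$ on $\partial M$, then $\w=0$}.

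First I would take $\w\in V_1$ with $I_1\w=0$, i.e. $\lVert\w\rVert_{W_1}=0$, which immediately gives $\iota^*\w=0$ and $\iota^*\delta\w=0$ on $\partial M$. Combined with the fact that $\w\in\check{H}^2_{N,1}(M)$, I now have all four boundary conditions $\nu\lrcorner\w=0$, $\nu\lrcorner d\w=0$, $\iota^*\w=0$, $\iota^*\delta\w=0$ on $\partial M$, together with $\Delta^2\w=0$ weakly in $M$. The key step is to feed these into the integration-by-parts identity \eqref{eq:IPP2} with $\w'=\w$: since $\Delta^2\w=0$ and every one of the four boundary terms on the right of \eqref{eq:IPP2} contains one of the four vanishing boundary quantities as a factor, the boundary integral drops out entirely and one is left with
\[
0=\int_M\langle\Delta^2\w,\w\rangle\,d\mu_g=\int_M|\Delta\w|^2\,d\mu_g,
\]
forcing $\Delta\w=0$ on $M$. (If one is worried about applying \eqref{eq:IPP2} to a merely weak solution, I would first approximate $\w$ by a sequence $(\w_m)_m$ in $\check{Z}_1$ converging in $\lVert\cdot\rVert_V$, hence in $H^2(M)$ by \lemref{lem:equivalence}, and pass to the limit; alternatively invoke elliptic regularity from \lemref{lem:Bilap1} to know $\w$ is genuinely smooth and harmonic-biharmonic.)

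Once $\Delta\w=0$, the form $\w$ is harmonic, and I would then use \eqref{eq:IPP3} with $\w'=\w$: the left-hand side vanishes, and the two boundary terms $\langle\nu\lrcorner d\w,\iota^*\w\rangle$ and $\langle\iota^*\delta\w,\nu\lrcorner\w\rangle$ both vanish because $\nu\lrcorner d\w=0$ and $\nu\lrcorner\w=0$ (indeed several of these factors vanish), leaving
\[
0=\int_M\big(|d\w|^2+|\delta\w|^2\big)\,d\mu_g.
\]
Hence $d\w=0$ and $\delta\w=0$, so together with $\nu\lrcorner\w=0$ this places $\w\in H^p_A(M)$. But $\w\in V_1$ is orthogonal to $H^p_A(M)$ in $L^2(\partial M)$ by construction of $\check{Z}_1$, and a nonzero element of $H^p_A(M)$ has $\iota^*\w\neq0$ (a harmonic field with $\nu\lrcorner\w=0$ and $\iota^*\w=0$ on the boundary vanishes identically, cf.\ \cite{ColetteAnne}); combining $\w\in H^p_A(M)$ with $\w\perp_{L^2(\partial M)}H^p_A(M)$ gives $\iota^*\w=0$, whence $\w_{|\partial M}=0$ and the unique-continuation result of \cite{ColetteAnne} yields $\w=0$ on $M$. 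Therefore $\ker I_1=\{0\}$ and $I_1$ is injective.

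\emph{The main obstacle} I anticipate is purely a regularity/rigor issue rather than a conceptual one: the identities \eqref{eq:IPP2} and \eqref{eq:IPP3} are stated for smooth forms, whereas a generic element of $V_1$ lives only in $H^2(M)$ and satisfies $\Delta^2\w=0$ merely weakly, so I must justify that the boundary terms make sense and that the integration by parts is valid in this weaker setting. The cleanest route is the density argument through $\check{Z}_1$ noted above, using \lemref{lem:equivalence} to upgrade $V$-convergence to $H^2$-convergence and the continuity of the trace operator to pass the vanishing boundary data to the limit; everything else is a direct, essentially bookkeeping, application of the integration-by-parts formulas already recorded in \propref{prop:IPP}.
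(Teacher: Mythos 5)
Your proof is correct, and it reaches the same destination as the paper's ($\iota^*\w=\iota^*\delta\w=0$ together with the Neumann conditions forces $\Delta\w=0$, and then unique continuation from \cite{ColetteAnne} gives $\w=0$), but the mechanism you use for the central step is different. You kill all four boundary terms of \eqref{eq:IPP2} at once and conclude $\int_M|\Delta\w|^2=0$; this requires making sense of the third-order traces $\nu\lrcorner d\Delta\w$ and $\nu\lrcorner\Delta\w$ for a form that is a priori only in $H^2(M)$ with $\Delta^2\w=0$ weakly, which you rightly flag and handle by density through $\check{Z}_1$ (this is exactly the continuity statement the paper later isolates as \lemref{lem:dansLaPreuveDeL'operateur}). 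The paper instead avoids third-order traces entirely: from $\iota^*\w=0$, $\nu\lrcorner\w=0$, $\iota^*\delta\w=0$, $\nu\lrcorner d\w=0$ it computes pointwise on $\partial M$ that $d\w=\nu^\flat\wedge\nabla_\nu\w$ and $\delta\w=-\nu\lrcorner\nabla_\nu\w$ both vanish, deduces $\nabla_\nu\w=0$ and hence $\w\in H_0^2(M)$, and then simply tests the weak formulation \eqref{eq:solfaible} against $\w$ itself --- a purely $H^2$-level argument. The paper's route has the side benefit that the intermediate fact ``$\iota^*\w=\iota^*\delta\w=0$ on $\partial M$ implies $\w\in H_0^2(M)$'' is reused verbatim in the proof of \lemref{lem:Hn=V+H0}; your route is more direct but pays for it in trace regularity. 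One small remark: your detour through $H_A^p(M)$ after obtaining $\Delta\w=0$ is superfluous --- at that point you already have $\iota^*\w=0$ and $\nu\lrcorner\w=0$, hence $\w_{|\partial M}=0$, and \cite{ColetteAnne} applies immediately to the harmonic form $\w$.
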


\begin{proof}
  Let $\w\in V$ be such that $J\circ I(\w)=0$, i.e., $\iota^*\w=0$ on $\partial M$. Note that $\nu\lrcorner d\w=0$, we also have $\nu\lrcorner\w=0$, then $\w_{|\partial M}=0$, therefore $\w\in\{ \alpha \in H_0^1(M) \cap H^2(M) \mid \nu \lrcorner d\alpha = 0 \text{ sur } \partial M \}$ and we can replace $\w$ by $\w'$ in \eqref{eq:solfaible} to obtain $\Delta\w=0$. Given that $\w_{|\partial M}=0$, we have $\w=0$ by \cite{ColetteAnne}. We conclude that $J\circ I$ is injective and consequently $I$ is also.
\end{proof}
To complete the proof of \thref{thm:TheoremeGeneralBSNF3PourLaPreuve}, we introduce the operator $K:V\longrightarrow V$ defined as follows:
\begin{equation}\label{eq:operateurK}
K:=D_{V}^{-1}\circ ~^tI\circ D_{W}\circ I
\end{equation}
where $D_{V}:V\longrightarrow V'$ and $D_{W}:W\longrightarrow W'$ are natural dual isomorphisms defined as follows: $D_{V}(\w):=(.,\w)_{V}$ and $D_{W}(\w):=(.,\w)_{W}$ for all $\w$ in $V$ and $W$ respectively, and $~^tI(\theta)=\theta\circ I$ for all $\theta$ in $W'.$ Note that, $K$ is characterized for all $\w$ and $\w'$ in $V$ by:
\begin{equation}\label{eq:CharacterizationK3}
    (K\w,\w')_{V}=(~^tI\circ D_{W}\circ I(\w))(\w')=(D_{W}\circ I(\w))(I(\w'))=(I(\w),I(\w'))_{W}=(\w,\w')_{W},
\end{equation} since $V\subset W.$
We observe that the operator $K$ is self-adjoint and non-negative. Moreover, since $I$ is injective we get $\ker K=\{0\}$, so $K$ is positive definite. The compactness of $K$ follows from the fact that it is the composition of continuous mappings with $I$, which is compact. Finally, since $K$ is compact, self-adjoint, and positive definite on $V$, which is a Hilbert space, there exists a sequence of real, positive, and non-increasing eigenvalues $(\chi_{i,1})_{i\geq 1}$ that tend to zero, with finite-dimensional eigenspaces, as per \cite[p.431]{LaurentSchwartz}. The eigenspaces form a dense space in $V$, and the $(\w_i)_i$ form a Hilbert basis. To conclude the proof let us show the following Lemma.

\begin{lemma}\label{lem:k1w=xwBSN}
    A form $\w$ is a solution of BSN for $\ell\in \mathbb{R}^*$ if and only if \begin{equation}\label{eq:K1w=XwBSN}
        K\w=\frac{1}{\ell}\w.
    \end{equation}
\end{lemma}
\begin{proof}
    Let $ \ell \in \mathbb{R}^* $ such that $ \w $ is a solution of $BSN$, integrating by parts \eqref{eq:IPP2} for $ \w' \in \check{Z_1} $, we have:  \[
(\Delta \w, \Delta \w')_{L^2(M)} = \ell (\iota^* \w, \iota^* \w')_{L^2(\partial M)} .
\]  Since $\check{Z_1}$ is dense in $V$, we have for all $\w'\in V$, $(\w, \w')_{V} = \ell ( \w, \w')_{W}$. Then by definition of $K$ we get $(\w, \w')_{V} = \ell (K \w, \w')_{V}$ for all $\w'\in V.$ Hence $K\w=\frac{1}{\ell}\w$, consequently $\w$ is an eigenform for $K$ associated to $\frac{1}{\ell_{i,p}}=\chi_i$.\\
Conversely, let $\w_i$ be a $\chi_{i}$-eigenform in $V$ for $K$. We have by \lemref{lem:solfaible} that $\Delta^2\w_i=0$ weakly on $M$, $\nu\lrcorner\w_i=0,\ \nu\lrcorner d\w_i=0$ and $\nu\lrcorner \Delta\w_i=0$ on $\partial M$. It remains to show that $\nu\lrcorner\Delta d\w_i=-\ell\iota^*\w_i$. For all $\w\in\check{Z_1},$ we have by definition \begin{align}\label{eq:preuve1}
\chi_i(\Delta\w_i,\Delta \w)_{L^2(M)}\notag &= \chi_i(\w_i,\w)_{V}\\ \notag
&= (K\w_i,\w)_{V} \\ \notag
&=(I(\w_i),I(\w))_{W} \\ \notag
&= (\w_i,\w)_{W} \\
&= (\iota^*\w_i,\iota^* \w)_{L^2(\partial M)}.
\end{align}
\begin{lemma}\label{lem:dansLaPreuveDeL'operateur}
    For all $\w\in V,\ \w' \in H_{N,1}^2(M)$, we have
    \[
    \int_M\langle \Delta\w , \Delta \w'\rangle  d\mu_g  + \int_{\partial M}  \langle \nu \lrcorner d \Delta\w, \iota^* \w'\rangle d\mu_g =0.
    \]
\end{lemma}
\begin{proof}
    Let $\w'\in H_{N,1}^2(M)$ and let $(\w_m)_m$ be a sequence in $ H_{N,1}^2(M)$ that converges to $\w$ in $V$. Using \eqref{eq:IPP2}, we obtain:
    \begin{equation}\label{eq:justepourlapreuve}  
    \int_M\langle \Delta\w_m, \Delta \w'\rangle  d\mu_g  + \int_{\partial M}  \langle \nu \lrcorner d \Delta\w_m, \iota^* \w'\rangle d\mu_g =0.
    \end{equation}
    Now, $V\subset H^2(M)$, and by the equivalence of the norms $\lVert \cdot \rVert_{V}$ and $\lVert \cdot \rVert_{H^2(M)}$, the sequence $(\w_m)_m$ converges to $\w$ in $H^2(M)$. Hence, $(\Delta \w_m)_m$ converges in $L^2(M)$ to $\Delta \w$, $(\nu\lrcorner d \Delta \w_m)_m$ converges in $H^{-\frac{3}{2}}(\partial M)$ to $\nu\lrcorner d \Delta\w$. Furthermore, the bilinear form in \eqref{eq:justepourlapreuve} is well-defined and continuous because $\langle \underbrace{\nu \lrcorner d \Delta\w_m}_{\in H^{-\frac{3}{2}}(\partial M)}, \underbrace{\iota^* \w'}_{\in H^{\frac{3}{2}}(\partial M)}\rangle$, see \cite[Definition 1.7.3]{ChristianBaer}. Thus, the result follows.
\end{proof}
Thus, for all $\w\in Z_1$, we have 
\[
(\Delta\w_i,\Delta \w)_{L^2(M)}+(\nu\lrcorner d\Delta\w_i,\iota^*\w)_{L^2(\partial M)}=0.
\]
Using \eqref{eq:preuve1}, we obtain:
\begin{equation} 
    \left(\frac{1}{\chi_i}\iota^*\w_i + \nu \lrcorner d \Delta\w_i, \iota^* \w\right)_{L^2(\partial M)} + \left(\frac{1}{\chi_i}\iota^*\delta\w_i + \nu \lrcorner \Delta\w_i, \iota^*\delta \w\right)_{L^2(\partial M)} = 0.
\end{equation}

Consider the following map:
\begin{align*}
O:Z_1 &\longrightarrow \Omega^p(\partial M)\\
\w&\longmapsto \iota^*\w
\end{align*}
It is continuous with respect to the norms $\lVert\cdot\rVert_{V}$ and $\lVert\cdot\rVert_{L^2(\partial M)}$ and is injective since it is the restriction of $J\circ I$ on $Z_1$, which is continuous and injective by  \lemref{lem:i3injective}.

We show the following lemma, that we use in the sequel: \begin{lemma}\label{lem:Bilap2} The problem  
\begin{equation*}
(biLap_2) \begin{cases}\label{eq:bilap2}
    \Delta^2 \w &= f\ \ \qquad \text{on } M\\
    \nu \lrcorner \w &= \w_1 \qquad \text{on } \partial M \\
    \nu \lrcorner d \w &= \w_2\qquad  \text{on } \partial M \\
    \iota^*\w&= \w_3 \qquad \text{on } \partial M \\
    \nu \lrcorner \Delta \w &= \w_4 \qquad \text{on } \partial M.
\end{cases}
\end{equation*}
 admits a unique solution $\w$ for all $(f,\w_1,\w_2,\w_3,\w_4)\in H^{s-4}(M) \times H^{s-\frac{1}{2}}(\partial M)\times H^{s-\frac{3}{2}}(\partial M)\times H^{s-\frac{1}{2}}(\partial M)\times H^{s-\frac{3}{2}}(\partial M)$ where $s\geq 4$. In particular for $f=0,\ \w_1=0,\ \w_2=0,\ \iota^*\w = \w_3\perp_{L^2(\partial M)}H_A^p(M) $ and $ \nu \lrcorner \Delta \w = \w_4 $.\end{lemma} 
 
 \begin{proof}
  The problem is elliptic in the sense of Shapiro-Lopatinskij, by doing the same calculations of principal symbols as in  \secref{sec:BSNelliptic}. As in the proof of \lemref{lem:Bilap1}, $\mathcal{N}^*=0$ and the kernel $\mathcal{N}$ is trivial, which implies that the operator is injective. So by \thref{theo:thm5.4} the map \begin{eqnarray*}
\mathcal{P}_{biLap_2}:{H^s}(M)&\longrightarrow & H^{s-4}(M) \oplus H^{s-\frac{1}{2}}(\partial M)\oplus H^{s-\frac{1}{2}}(\partial M)\oplus H^{s-\frac{3}{2}}(\partial M)\oplus H^{s-\frac{3}{2}}(\partial M)\\
  \left[\w\right]& \longmapsto & (\Delta^2\w,\nu\lrcorner\w,\nu\lrcorner d\w,\iota^*\w,\nu\lrcorner\Delta\w)  \notag 
\end{eqnarray*} is an isomorphism on its image that is given for all $s\geq 4$ by:
\[ \im(\mathcal{P}_{biLap_2}) =  H^{s-4}(M) \times H^{s-\frac{1}{2}}(\partial M)\times H^{s-\frac{3}{2}}(\partial M)\times H^{s-\frac{1}{2}}(\partial M)\times H^{s-\frac{3}{2}}(\partial M).\] In particular, by \thref{theo:thm5.4} $(biLap_2)$ admits a unique solution. \end{proof}
\begin{lemma}\label{lem:O3surjective}
    The map $O$ is surjective.
\end{lemma}
\begin{proof}
The map \begin{align*}
{O}_{|\check{Z_1}}:\check{Z_1} &\longrightarrow \Omega^p(\partial M)\cap H_A^p(M)^{\perp_{L^2(\partial M)}}
\end{align*} is surjective. Since by \lemref{lem:Bilap2}, for all $\w_0\in H^{\frac{3}{2}}(\partial M),$ there exists $\w\in H^2(M)$ such that \begin{equation*}
\begin{cases}
    \Delta^2 \omega = 0 & \text{on } M, \\
    \nu \lrcorner \omega = 0 & \text{on } \partial M, \\
    \nu \lrcorner d\omega = 0 & \text{on } \partial M, \\
    \iota^* \omega = \omega_0 & \text{on } \partial M, \\
    \nu \lrcorner \Delta \omega = 0 & \text{on } \partial M.
\end{cases}
\end{equation*}
\end{proof}
 
We know by \eqref{eq:justepourlapreuve} that for all $\w\in Z_1$, \[(\nu\lrcorner d\Delta\w_i + \ell\iota^*\w_i,\iota^*\w)_{L^2(\partial M)}=0.\] Now as $O$ is surjective by \lemref{lem:O3surjective}, we consequently have that for all $\alpha\in \Omega^p(\partial M),$  \[(\nu\lrcorner d\Delta\w_i + \ell\iota^*\w_i,\alpha)_{L^2(\partial M)}=0.\] 

By duality it follows that $\nu\lrcorner d\Delta\w_i + \ell\iota^*\w_i =0$ holds in $H^{-\frac{3}{2}}(\partial M)$. Now by \thref{theo:thm5.4}, $\w_i$ is smooth and is an eigenform of the problem \eqref{eq:BSNF3preuve} associated with the eigenvalue $\ell_{i,p} = \frac{1}{\chi_{i}}$.\end{proof}
 This completes the proof of \thref{thm:TheoremeGeneralBSNF3PourLaPreuve}.
\end{proof}

\subsection{Variational characterization of eigenvalues}
In this section, we provide the variational characterizations associated with the eigenvalues of BSN, which we use to establish the Kuttler-Sigillito inequalities of  \secref{sec:IntroKuttlerSigillito}.

\begin{thm}\label{th:l1P}
A variational characterization of the first strictly positive eigenvalue of problem \eqref{eq:BSNF3} is given by
\begin{align}
\label{eq:l1BSN}
\ell_{1,p} &= \inf\left\{ \frac{\lVert \Delta \w \rVert^2_{L^2(M)}}{\lVert \iota^*\w\rVert^2_{L^2(\partial M)}} \ |\ \w \in \check{H}^2_{N,1}(M) \ \text{and}\ \iota^*\w \neq 0 \ \text{on}\ \partial M \right\}.
\end{align}
It is a minimum, attained only by eigenforms of BSN.
\end{thm}

\begin{proof}
 By applying \eqref{eq:integrationPourWdansZ3} to $\w_{1,p}$, a $\ell_{1,p}$-eigenform, we obtain
    \[
    \int_M \lvert \Delta \w_{1,p} \rvert^2 d\mu_g = \ell_{1,p} \int_{\partial M}  \langle \iota^* \w_{1,p}, \iota^* \w_{1,p} \rangle  d\mu_g.
    \]
    Therefore,
    \[
    \ell_{1,p} = \frac{\lVert \Delta \w_{1,p} \rVert^2_{L^2(M)}}{\lVert \iota^* \w_{1,p} \rVert^2_{L^2(\partial M)} }.
    \]
   Let $\w\in V$. By the proof of  \thref{thm:TheoremeGeneralBSNF3PourLaPreuve}, $ \w = \sum_i (\w, \w_i)_{V} \w_i $, where $ (\w_i)_i $ is a Hilbertian orthonormal basis of $ V $ consisted of eigenforms for the operator $K$, whose eigenvalues $(\chi_i)_{i\geq 1}$ satisfy $\chi_i=\frac{1}{\ell_{i,p}}$. We have hence
    \[
    \lVert \Delta \w \rVert^2_{L^2(M)} = \lVert \w \rvert^2_{V} = \sum_i (\w, \w_i)_{V}^2.
    \]
    Using \eqref{eq:K1w=XwBSN}, we compute:
    \begin{alignat*}{2}
    &\lVert \iota^* \w \rVert^2_{L^2(\partial M)}  &&= \lVert \w \rVert^2_{W}\\
    &&&= (K \w, \w)_{V} \\
    &&&= \bigg( K \sum_i (\w_i, \w)_{V} \w_i, \sum_j (\w_j, \w)_{V} \w_j \bigg)_{V} \\
    &&&= \sum_i \chi_i (\w_i, \w)_{V}^2 \\
    &&&\leq \chi_1\sum_i (\w_i,\w)^2_{V}\\
    &&&=\frac{1}{\ell_{1,p}} \lVert \w\rvert^2_{V}\\
    &&&= \frac{1}{\ell_{1,p}} \lVert \Delta \w \rVert^2_{L^2(M)}.
    \end{alignat*}

    Thus,
    \[
\ell_{1,p} \leq  \frac{\lVert \Delta \w \rVert^2_{L^2(M)}}{\lVert \iota^* \w \rVert^2_{L^2(\partial M)}},
    \]
and the inequality is strict unless $\w$ is a  $\chi_1$-eigenform of $K$, i.e., $\w$ is an $\ell_{1,p}$-eigenform of BSN \eqref{eq:BSNF3preuve}.
 We have shown that,
    \[
    \ell_{1,p} = \inf \left\{\frac{\lVert \Delta \w \rVert^2_{L^2(M)}}{\lVert \iota^* \w \rVert^2_{L^2(\partial M)} } \ \big| \ \w \in V\setminus \{0\} \right\}.
    \]

    Let us now show that the same characterization holds for all $ \w \in \check{H}^2_{N,1}(M) $ that are non-zero on the boundary. To do so, we use the following lemma: \begin{lemma}\label{lm:directsum}
    On $\check{H}^2_{N,1}(M)$ we have the following orthogonal decomposition with respect to $(\cdot,\cdot)_{V}$:\[
\check{H}^2_{N,1}(M)=V\oplus_{\perp_{(\cdot,\cdot)_{V}}}H_0^1(M)\cap\check{H}^2_{N,1}(M).
\]
\end{lemma}
\begin{proof}
    
Note that by \lemref{lem:psBSN} the inner product $(\cdot,\cdot)_{V}$ is well-defined on $\check{H}^2_{N,1}(M).$ Let $\w\in\check{Z_1}, \w'\in \check{H}^2_{N,1}(M)$, then by performing an integration by parts \eqref{eq:IPP2}, we obtain
\[
\int_M\langle \Delta\w , \Delta \w'\rangle  d\mu_g=- \int_{\partial M}  \langle \nu \lrcorner d \Delta\w, {\iota^* \w'}\rangle d\mu_g,
\]
so that for $\w'\in H_0^1(M)\cap\check{H}^2_{N,1}(M)$, we have $( \w,\w')_{V}=0$, thus $H_0^1(M)\cap\check{H}^2_{N,1}(M)\subset\check{Z_1}^\perp=V^\perp.$\\
Conversely, let $\w'\in V^\perp\subset\check{H}^2_{N,1}(M)$, then $(\w_i,\w')_{V}=0$ so that $(\nu\lrcorner d\Delta\w_i,\iota^*\w')_{L^2(\partial M)}=0$, thus $-\ell(\iota^*\w_i,\iota^*\w')_{L^2(\partial M)}=0$, and consequently, $(\iota^*\w_i,\iota^*\w')_{L^2(\partial M)}=0.$ Moreover, since $\w'\in V^\perp\subset\check{H}^2_{N,1}(M)$ it satisfies
\begin{equation*}
\begin{cases}
    \nu\lrcorner d \w' = 0 & \text{on } \partial M \\
 \nu\lrcorner\w' = 0 & \text{on } \partial M\\
 \nu\lrcorner \Delta \w' = 0 & \text{on } \partial M.
\end{cases}
\end{equation*}
Let us show that $\iota^*\w'=0.$\\
The operator $\Delta^2$ with the boundary conditions of the problem \eqref{eq:pb} is an invertible operator and thus has a trivial kernel (see \lemref{lem:Bilap2}). Therefore, the problem
\begin{equation}\label{eq:pb}
\begin{cases}
    \Delta^2 \w = 0 & \text{on}\ M \\
    \nu\lrcorner d \w = 0 & \text{on } \partial M \\
 \nu\lrcorner\w = 0 & \text{on } \partial M\\
 \nu\lrcorner \Delta \w = 0 & \text{on } \partial M \\
  \iota^*\w = 0 &  \text{on } \partial M,
\end{cases}
\end{equation}
which is a particular case of $(biLap_2)$ \eqref{eq:bilap2} admits $\w=0$ as unique solution. We have shown in  \secref{sec:BSNelliptic} that the operator $O:Z_1\longrightarrow \Omega^p(M)$ is surjective, so for every strictly positive $\epsilon$, there exists $\w\in \Omega^p(\partial M)$ such that $\lVert \iota^*\w'-\iota^*\w\rVert_{L^2(\partial M)}<\epsilon$. Therefore, there exists $\w_\epsilon\in\Omega^p(M)$ such that
\begin{equation*}
\begin{cases}
    \Delta^2 \w_\epsilon = 0 & \text{on}\ M \\
    
    \nu\lrcorner d \w_\epsilon = 0 & \text{on } \partial M \\
 \nu\lrcorner\w_\epsilon = 0 & \text{on } \partial M\\
 \nu\lrcorner \Delta \w_\epsilon = 0 & \text{on } \partial M\\
 \iota^*\w_\epsilon = \iota^*\w &  \text{on } \partial M,
\end{cases}
\end{equation*}
which implies that 
\begin{equation*}
\begin{cases}
    \Delta^2 \w_\epsilon = 0 & \text{on}\ M \\
    
    \nu\lrcorner d \w_\epsilon = 0 & \text{on } \partial M \\
 \nu\lrcorner\w_\epsilon = 0 & \text{on } \partial M\\
 \nu\lrcorner \Delta \w_\epsilon = 0 & \text{on } \partial M\\
 \lVert\iota^*\w_\epsilon-\iota^*\w'\rVert_{L^2(\partial M)} < \epsilon.
\end{cases}
\end{equation*}
We then decompose the $\w_\epsilon\in Z_1$ as follows: $\w_\epsilon=\check{\w}_\epsilon+\alpha_\epsilon$ such that $\check{\w}_\epsilon\in\check{Z_1}$ and $\alpha_\epsilon\in H_A^p(M)$.  
By the continuity of the trace map and having $\lVert\cdot\rVert_{V}\sim\lVert\cdot\rVert_{H^2(M)}$ (see the proof of  \thref{thm:TheoremeGeneralBSNF3PourLaPreuve}) we get the continuity of the following maps:
\[
\begin{array}{ccccc}
V & \overset{}{\longrightarrow} & H^{\frac{3}{2}}(\partial M) & \overset{}{\longrightarrow} & L^2(\partial M) \\
\w & \longmapsto & \iota^*\w & \longmapsto & \iota^*\w 
\end{array}
\] Now given that $(\iota^*\w_i,\iota^*\w')_{L^2(\partial M)}=0$, we have by the continuity of the map above 
\[
(\iota^*\check{\w}_\epsilon,\iota^*\w')_{L^2(\partial M)}=\sum_i(\check{\w}_\epsilon,\w_i)_{V} (\iota^*\w_i,\iota^*\w')_{L^2(\partial M)}=0
.\] Moreover, since for $\alpha_\epsilon\in H_A^p(M)$ and $\w'\in \check{H}^2_{N,1}(M)$ we have $(\iota^*\alpha_\epsilon,\iota^*\w')_{L^2(\partial M)}=0,$
this implies that $(\iota^*{\w}_\epsilon,\iota^*\w')_{L^2(\partial M)}=0$.
Letting $\epsilon$ tend to $0$, we obtain $\lVert\iota^*\w'\rVert^2_{L^2(\partial M)}=0$, hence $\iota^*\w'=0$. Since we already have $\nu\lrcorner\w'=0$, it follows that $\w'_{|\partial M}=0$, so $\w'\in H_0^1(M)$. Finally, we conclude that  
\[
\check{H}^2_{N,1}(M)=V\oplus_{\perp_{(\cdot,\cdot)_{V}}}H_0^1(M)\cap \check{H}^2_{N,1}(M).
\]
\end{proof}
    We can then perform the following decomposition for $\w \in \Omega^p(M)$ such that $\nu \lrcorner d\w = 0$, $\nu \lrcorner \w = 0$ and $\nu\lrcorner\Delta\w=0$, $\w = \w_{V} + \hat{\w}$, with $\w_{V} \in V$ and $\hat{\w} \in H_0^1(M)\cap\check{H}^2_{N,1}(M)$. We write\begin{equation}\label{eq:w=wv+^w}
    \lVert \Delta \w\rVert^2_{L^2(M)}=\lVert \w\rvert^2_{V}=\lVert \w_{V}\rvert^2_{V}+\lVert \hat{\w}\rvert^2_{V}\end{equation} and \begin{equation}\label{eq:iw=iwv}
\lVert \iota^*\w\rVert^2_{L^2(\partial M)}=\lVert \iota^*\w_{V}\rVert^2_{L^2(\partial M)}\end{equation}
since $\iota^* \hat{\w} = 0$. Thus, using \eqref{eq:w=wv+^w} and \eqref{eq:iw=iwv}, we get
\begin{align}\label{eq:egalite}
\lVert \Delta \w\rVert^2_{L^2(M)} &= \lVert \Delta \w_{V}\rVert^2_{L^2(M)}+\lVert \Delta \hat{\w}\rVert^2_{L^2(M)} \\ \nonumber
&\geq \lVert \Delta \w_{V}\rVert^2_{L^2(M)} \\ \nonumber
&\geq \ell_{1,p}\lVert \iota^*\w_{V}\rVert^2_{L^2(\partial M)} \\ \nonumber
&= \ell_{1,p}\lVert \iota^*\w\rVert^2_{L^2(\partial M)};
\end{align}
We conclude finally that
\[
\ell_{1,p} = \inf \left\{ \frac{\lVert \Delta \w \rVert^2_{L^2(M)}}{\lVert \iota^* \w \rVert^2_{L^2(\partial M)}} \ |\ \w \in \check{H}^2_{N,1}(M), \ \iota^* \w \neq 0\ \text{on} \ \partial M \right\}  
\]and the infimum is only reached for $\ell_{1,p}$-eigenforms of BSN.
   \end{proof}

Actually the condition $\nu\lrcorner\Delta\w$ can be dropped in \eqref{eq:l1BSN}: we have the following stronger theorem.
\begin{thm}
A variational characterization of the first strictly positive eigenvalue of problem \eqref{eq:BSNF3} is given by
\begin{align}
\label{eq:l1BSNWithoutNulrcornerDeltaw}
\ell_{1,p}&= \inf\left\{ \frac{\lVert \Delta \w \rVert^2_{L^2(M)}}{\lVert \iota^*\w\rVert^2_{L^2(\partial M)}} \ |\  \w\in \check{H}^2_{N}(M)\ \text{and}\  \iota^*\w \neq 0 \ \text{on}\ \partial M \right\}.
\end{align}

\end{thm}
 See \eqref{eq:HN1check} for the definition of $\check{H}^2_{N}(M)$.

\begin{proof}
    We recall that the operator $K:V\longrightarrow V$ is self-adjoint, compact with a discrete spectrum consisting of the eigenvalues $\frac{1}{\ell_{i,p}}.$ Thus, 
\[
\frac{1}{\ell_{1,p}}=\underset{\w\in V\backslash \{0\}}{\sup}\frac{(K\w,\w)_{V}}{\lVert \w\rVert^2_{V}}=\underset{\w\in V\backslash \{0\}}{\sup}\frac{\lVert \iota^*\w\rVert^2_{L^2(\partial M)}}{\lVert \Delta\w\rVert^2_{L^2(M)}}
\]
so that
\begin{align*}
\ell_{1,p} &= \inf\left\{\frac{\lVert\Delta\w\rVert^2_{L^2(M)}}{\lVert \iota^*\w\rVert^2_{L^2(\partial M)}},\ \w\in V\right\} \\
&= \inf\left\{\frac{\lVert\Delta\w\rVert^2_{L^2(M)}}{\lVert \iota^*\w\rVert^2_{L^2(\partial M)}} \ |\  \w\in\Omega^p(M),\
\nu\lrcorner\w=0,\ \nu\lrcorner d\w=0,\ \nu\lrcorner\Delta\w=0\ \text{on}\ \partial M\ \text{and}\ \w\perp_{L^2(\partial M)}H_A^p(M)\right\}. 
\end{align*}
We want to remove the condition $\nu\lrcorner\Delta\w=0.$ To do so, we use the decomposition of \lemref{lm:directsum}. Let us now show that the infimum in \eqref{eq:l1BSNWithoutNulrcornerDeltaw} exists and is a minimum.  We first define the space \[Z := \left\{ \w \in \Omega^p(M)\cap H^2(M) \mid \Delta^2 \w = 0 \ \text{on} \ M,\ \nu\lrcorner\w=0 \ \text{and}\ \nu\lrcorner d\w=0\ \text{on}\ \partial M \right\}.\] 
Since $\lVert \cdot \rVert_{V} \sim \lVert \cdot \rVert_{H^2(M)}$ on $\check{H}^2_{N}(M)$ (see \lemref{lem:equivalence}), and we have $\lVert \cdot \rVert_{W} \leq \lVert \cdot \rVert_{W_3} \leq c \lVert \cdot \rVert_{H^\frac{3}{2}(M)}$ on $Z$ and so on $Z_1$, and the embedding $H^2(M) \hookrightarrow H^\frac{3}{2}(M)$ is compact, thus, the map  
\begin{eqnarray*}
\check{H}^2_{N}(M)\setminus\{0\}& \longrightarrow & \mathbb{R} \\
  \w & \longmapsto & \frac{\lVert\w\rVert^2_{W}}{\lVert\w\rVert^2_{V}}  \notag 
\end{eqnarray*}
admits a maximum on $\overline{\mathcal{B}}_1^{\check{H}^2_{N}(M)}(0)$. That is, there exists $\hat{\w}\in \mathcal{B}_1\subset \check{H}^2_{N}(M)$ such that 
$\frac{\lVert\hat{\w}\rVert^2_{W}}{\lVert{\w}\rVert^2_{V}} = \sup \left\{ \frac{\lVert\hat{\w}\rVert^2_{W}}{\lVert{\w}\rVert^2_{V}} \ \text{on } \mathcal{B}_1 \right\}.$

To conclude, recall the following functional from \secref{sec:weak}: $\mathcal{Q}(\w) := \frac{\lVert \Delta \w \rVert^2_{L^2(M)}}{\lVert \iota^*\w \rVert^2_{L^2(\partial M)}}$ and let $\w \in \check{H}^2_{N,1}(M)$ such that $\w = u + v$ where $u \in V$ and $v \in H_0^1(M)$. Given that $\iota^*v\in H_0^1(M)$, we have then:  
\[
\mathcal{Q}(\w) = \frac{\lVert \Delta u + \Delta v \rVert^2_{L^2(M)}}{\lVert \iota^* u \rVert^2_{L^2(\partial M)}} = \frac{\lVert u + v \rVert^2_{V}}{\lVert \iota^* u \rVert^2_{L^2(\partial M)}} = \frac{\lVert u \rVert^2_{V} + \lVert v \rVert^2_{V}}{\lVert \iota^* u \rVert^2_{L^2(\partial M)}} \geq \frac{\lVert u \rVert^2_{V}}{\lVert \iota^* u \rVert^2_{L^2(\partial M)}}.
\] So we can conclude that
\[
\inf \{\mathcal{Q}(\w) \mid \w \in V\} = \ell_{1,p} = \inf \{\mathcal{Q}(\w) \mid \w \in \check{H}^2_{N,1}(M)\}
\]
is attained. \\
Next, let $\w\in\check{H}^2_{N}(M)$ be such that $\w = u' + v'$ where $u' \in \check{H}^2_{N,1}(M)$ and $v' \in (\check{H}^2_{N,1}(M))^{\perp_{V}}$. We decompose $u' \in V$ as an orthogonal sum over the eigenforms of $u' = \sum_{i \in I} \alpha_i \ell_i \w_i$, where $\w_i$ is an $\ell_i$-eigenform. Since $\iota^*$ is continuous on $V$, we get  
\[
\int_{\partial M} \langle \iota^* u', \iota^* v' \rangle d\mu_g = \sum_{i \in I} \alpha_i \int_{\partial M} \langle \ell_1 \iota^* \w_i, \iota^* v' \rangle d\mu_g.
\]For a fixed $i\in I$, we have \begin{align*}
\int_{\partial M} \langle \ell_{i,p} \iota^*\w_i,\iota^*v' \rangle d\mu_g
&= -\int_{\partial M} \langle \nu \lrcorner d\Delta \w_i,\iota^*v' \rangle d\mu_g \\
&= \int_M \underbrace{\langle \Delta \w_i, \Delta v' \rangle}_{0} d\mu_g+ \int_M \langle \underbrace{\Delta^2 \w_i}_{0}, v' \rangle d\mu_g \\
&\quad + \int_{\partial M} \langle \underbrace{\nu \lrcorner \Delta \w_i}_{0}, \iota^* \delta v' \rangle d\mu_g - \int_{\partial M} \langle \iota^* \delta \w_i, \underbrace{\nu \lrcorner v'}_{0} \rangle d\mu_g=0.
\end{align*} Therefore $\int_{\partial M} \langle \iota^* u', \iota^* v' \rangle d\mu_g = 0$. Since  
\[
\lVert \iota^* u' + \iota^* v' \rVert^2_{L^2(\partial M)} = \lVert \iota^* u' \rVert^2_{L^2(\partial M)} + \lVert \iota^* v' \rVert^2_{L^2(\partial M)} + 2 \int_{\partial M} \langle \iota^* u', \iota^* v' \rangle d\mu_g,
\]then  
\[
\mathcal{Q}(\w) = \frac{\lVert \Delta u' + \Delta v' \rVert^2_{L^2(M)}}{\lVert \iota^* u' + \iota^* v' \rVert^2_{L^2(\partial M)}} = \frac{\lVert \Delta u' \rVert^2_{L^2(M)} + \lVert \Delta v' \rVert^2_{L^2(M)}}{\lVert \iota^* u' \rVert^2_{L^2(\partial M)} + \lVert \iota^* v' \rVert^2_{L^2(\partial M)}}.
\] 
Let us next show that the closure of $\iota^*V$ is equal to $\check{L}^2(\partial M)=L^2(\partial M)\cap {H_A^p}^\perp(M).$
By \lemref{lem:Bilap2}, we have the inclusion $(\check{H}^2_{N,1}(M))^{\perp_{\check{H}^2_{N}(M)}} \subset \{\w \in \check{H}^2_{N}(M),\ \iota^* \w = 0\}$, thus $\mathcal{Q}(\w) \geq \mathcal{Q}(u')$. Finally,  
\[
\inf \{ \mathcal{Q}(\w), \ \w \in \check{H}^2_{N}(M) \} = \inf \{ \mathcal{Q}(\w), \ \w \in V \}.
\]
This completes the proof.
\end{proof}
\begin{thm}\label{thm:l_k_2}
For every $ k \in \mathbb{N} $, we have the following characterization for the $ k^{\text{th}} $ positive eigenvalue of BSN:
\[
\ell_{k,p} = \min_{\substack{\mathcal{V}_k \subset V \\ \dim \mathcal{V}_k = k}} \max_{0 \neq \w \in \mathcal{V}_k} \frac{\lVert \Delta \w \rVert^2_{L^2(M)}}{\lVert \iota^* \w \rVert^2_{L^2(\partial M)}}.
\]
\end{thm}
We rely on the following immediate Lemma:
\begin{lemma}\label{lem: v}
Let $H $ be a Hilbert space, and let $V$ and $ E_{k-1} $ be subspaces of $ H $ of dimension $ k $ and $ k-1 $ respectively, where $k\geq 1$. Then $ V \cap E_{k-1}^{\perp} \neq \{0\} $.
\end{lemma}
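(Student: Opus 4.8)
The plan is to reduce the claim to a rank--nullity argument via orthogonal projection, which works uniformly whether or not $H$ is finite dimensional. Note that the naive intersection-dimension formula $\dim(V\cap E_{k-1}^\perp)\geq \dim V+\dim E_{k-1}^\perp-\dim H$ is unavailable here, since $E_{k-1}^\perp$ may be infinite dimensional; so instead I would project onto the finite dimensional piece $E_{k-1}$ and count dimensions there.

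First I would let $P\colon H\longrightarrow E_{k-1}$ denote the orthogonal projection onto $E_{k-1}$. Since $E_{k-1}$ has finite dimension $k-1$ it is closed in $H$, so this projection is well defined, and by construction $\ker P=E_{k-1}^\perp$. Next, restrict $P$ to $V$, obtaining a linear map $P|_V\colon V\longrightarrow E_{k-1}$; its image is contained in $E_{k-1}$ and hence has dimension at most $k-1$. Since $\dim V=k>k-1$, the rank--nullity theorem forces
\[
\dim\ker(P|_V)\;\geq\;\dim V-\dim\operatorname{Im}(P|_V)\;\geq\;k-(k-1)\;=\;1,
\]
so $\ker(P|_V)\neq\{0\}$.

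Finally, any nonzero $v\in\ker(P|_V)$ satisfies $v\in V$ and $Pv=0$, that is $v\in E_{k-1}^\perp$; therefore $v$ is a nonzero element of $V\cap E_{k-1}^\perp$, which proves the lemma. There is no genuine obstacle here beyond the conceptual point of working with the projection rather than an intersection-dimension formula; the only thing requiring (trivial) care is the closedness of $E_{k-1}$, which is automatic from its being finite dimensional.
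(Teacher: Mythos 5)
Your proof is correct: projecting orthogonally onto the finite-dimensional (hence closed) subspace $E_{k-1}$, restricting to $V$, and applying rank–nullity is exactly the standard argument, and you rightly note that the naive intersection-dimension count is unavailable when $H$ is infinite dimensional. The paper states this lemma as ``immediate'' and gives no proof, so your write-up simply supplies the justification the authors omitted; there is nothing to object to.
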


\begin{proof}[Proof of  \thref{thm:l_k_2}]
Let $ (\w_i)_{i \geq 1} $ be a Hilbert basis of $ (V, (\cdot, \cdot)_{V}) $, with $ \w_i $ being $\ell_{i,p}$-eigenforms. According to \lemref{lem: v}, for $ \mathcal{V}_k \subset V $ such that $ \dim \mathcal{V}_k = k $, there exists a non-zero $ \w \in \mathcal{V}_k $ such that $ (\w, \w_i)_{V} = 0 $ for all $ i = 1, \cdots, k-1 $. Thus, $ \w = \underset{i \geq k}{\sum}(\w, \w_i)_{V} \w_i $ and we have:
\[
\lVert \w \rvert^2_{V} = \sum_{i \geq k} (\w, \w_i)_{V}^2,
\]
and
\[
\lVert \w \rVert_{W}^2 = (K \w, \w)_{V} = \sum_{i \geq k} \frac{1}{\ell_{i,p}} (\w, \w_i)_{V}^2 \leq \frac{1}{\ell_{k,p}} \sum_{i \geq 1} (\w, \w_i)_{V}^2.
\]
Thus, we conclude that $\ell_{k,p} \leq \frac{\lVert \w \rvert^2_{V}}{\lVert \w \rVert_{W}^2}.$ Therefore, $\ell_{k,p} \leq \underset{0 \neq \w \in \mathcal{V}_k}{\sup} 
  \frac{\lVert \w \rvert^2_{V}}{\lVert \w \rVert_{W}^2}.$ This implies that
\[
\ell_{k,p} \leq \min_{\substack{\mathcal{V}_k \subset V \\ \dim \mathcal{V}_k = k}} \max_{0 \neq \w \in \mathcal{V}_k} \frac{\lVert \Delta \w \rVert^2_{L^2(M)}}{\lVert \iota^* \w \rVert^2_{L^2(\partial M)}}.
\]

To show the second inequality, we define $ \overline{V}_k = \spann(\w_1, \cdots, \w_k) \subset V $. For $ \w \in \overline{V}_k $, we have 
\[
\lVert \w \rvert^2_{V} = \sum_{i=1}^k (\w, \w_i)_{V}^2,
\]
and
\[
\lVert \w \rVert_{W}^2 = (K \w, \w)_{V} = \sum_{i=1}^k \frac{1}{\ell_{i,p}} (\w, \w_i)_{V}^2 \geq \frac{1}{\ell_{k,p}} \sum_{i=1}^k (\w, \w_i)_{V}^2.
\]
Thus, $\frac{\lVert \w \rvert^2_{V}}{\lVert \w \rVert_{W}^2} \leq \ell_{k,p},$
therefore, $\underset{0 \neq \w \in \mathcal{V}_k}{\sup} \frac{\lVert \w \rvert^2_{V}}{\lVert \w \rVert_{W}^2} \leq \ell_{k,p}.$
This concludes the proof.
\end{proof}

\begin{thm}\label{theo:lk}
For all $ k \in \mathbb{N} $, we have the following characterization of the $ k^{\text{th}} $ positive eigenvalue:
    \begin{equation}\label{eq:lkBSN}
  \ell_{k,p} = \min_{\substack{\mathcal{V}_k\subset \check{H}^2_{N,1}(M) \subset \check{H}^2_{N}(M)  \\ \dim (\mathcal{V}_k / (H_0^2(M) \cap \mathcal{V}_k)) = k}} \ \underset{\w \in \mathcal{V}_k \backslash H_0^2(M)}{\max} \frac{\lVert \Delta \w \rVert^2_{L^2(M)}}{\lVert \iota^*\w \rVert^2_{L^2(\partial M)}}.  
    \end{equation}
 
\end{thm} 

\begin{lemma}\label{lem:Hn=V+H0}
    The following orthogonal decomposition holds with respect to $(\cdot,\cdot)_{V}$:
    \[
    \check{H}^2_{N}(M) = V \oplus H_0^2(M).
    \] 
\end{lemma}

\begin{proof}
The proof is similar to that of \lemref{lm:directsum}.
  \end{proof}

\begin{proof}
    See \eqref{eq:HN1check} for the definition of $\check{H}^2_{N}(M).$ We have from the above Lemma the following orthogonal decomposition $ \check{H}^2_{N}(M) = V \oplus H_0^2(M)$. Let $\mathcal{V}_k\subset \check{H}^2_{N}(M)$ such that $\dim (\mathcal{V}_k/H_0^2(M)\cap \mathcal{V}_k)=k$, and let $p_{V}:\check{H}^2_{N}(M)\longrightarrow V$ be the orthogonal projection with respect to the inner product $(\cdot,\cdot)_{V}$. We define $\mathcal{U}:=p_{V}(\mathcal{V}_k)$ hence $\mathcal{U}\subset V$ and $\dim\mathcal{U}=k$. Then, by \thref{thm:l_k_2}, we have $\ell_{k,p}\leq \underset{0\neq u\in \mathcal{U}}{\sup}\frac{\lVert u\rvert^2_{V}}{\lVert u \rVert_{W}^2}$. Now, let $\w\in \mathcal{V}_k\backslash H_0^2(M)$, we decompose $\w$ as follows: $\w=p_{V}(\w)+\w-p_{V}(\w)$ where $p_{V}(\w)\in \mathcal{U}\subset V$ and $\w-p_{V}(\w)\in V^{\perp_{(\cdot,\cdot)_{V}}}\subset \mathcal{U}^{\perp_{(\cdot,\cdot)_{V}}}$. Thus, we have $ \lVert \w \rVert_{V}^2=\lVert p_{V}(\w)\rVert_{V}^2+\lVert\w-p_{V}(\w)\rVert_{V}^2$ and since $\w-p_{V}(\w)\in H_0^2(M)$, we get $\lVert\w-p_{V}(\w)\rVert_{W}^2=0$ so $\lVert\w\rVert_{W}^2=\lVert p_{V}(\w)\rVert_{W}^2+\lVert\w-p_{V}(\w)\rVert_{W}^2$.

    We now have:
    \[
    \frac{ \lVert \w \rVert_{V}^2}{\lVert\w\rVert_{W}^2} = \frac{\lVert p_{V}(\w)\rVert_{V}^2 + \lVert\w-p_{V}(\w)\rVert_{V}^2}{\lVert p_{V}(\w)\rVert_{W}^2} \geq \frac{\lVert p_{V}(\w)\rVert_{V}^2}{\lVert p_{V}(\w)\rVert_{W}^2}.
    \]
    Therefore,
    \[
    \underset{0\neq \w\in \mathcal{V}_k\backslash H_0^2(M)}{\sup}\frac{\lVert \w\rVert_{V}^2}{\lVert \w \rVert_{W}^2} \geq \underset{0\neq \w\in \mathcal{V}_k\backslash H_0^2(M)}{\sup}\frac{\lVert p_{V}(\w)\rVert_{V}^2}{\lVert p_{V}(\w)\rVert_{W}^2} = \underset{ u\in p_{V}(\mathcal{V}_k)}{\sup}\frac{\lVert u\rVert_{V}^2}{\lVert u \rVert_{W}^2} \geq \ell_{k,p}.
    \]

    This implies that for all $k\in\mathbb{N}$, we have
    \[
    \ell_{k,p}\leq\min_{\substack{\mathcal{V}_k\subset \check{H}^2_{N}(M) \\ \dim (\mathcal{V}_k/H_0^2(M)\cap \mathcal{V}_k)=k}}\ \underset{u\in \mathcal{V}_k\backslash H_0^2(M)}{\max}\frac{\lVert u\rVert_{V}^2}{\lVert u \rVert_{W}^2}.
    \]
    Finally, let $\overline{V}_k=\spann(\w_1,\cdots,\w_k)\subset V$. Since $\overline{V}_k$ is a subspace of $V$ orthogonal to $H_0^2(M)$, we have $\overline{V}_k\cap H_0^2(M)=\{0\}$. Therefore,
    \[
    \min_{\substack{\mathcal{V}_k\subset \check{H}^2_{N}(M) \\ \dim (\mathcal{V}_k/H_0^2(M)\cap \mathcal{V}_k)=k}}\ \underset{\w\in \mathcal{V}_k\backslash H_0^2(M)}{\max}\frac{\lVert \w\rVert_{V}^2}{\lVert \w\rVert_{W}^2} \leq \underset{u\in \mathcal{V}_k\backslash H_0^2(M)}{\sup}\frac{\lVert u\rVert_{V}^2}{\lVert u\rVert_{W}^2} = \ell_{k,p}.
    \]
    This concludes the proof.
\end{proof}

\begin{thm}\label{thm:l'minimumtheorem}
Let us define
\begin{align*}
\ell^{\ '}_{1,p} &= \inf\left\{ \frac{\lVert \nu\lrcorner d\w \rVert^2_{L^2(\partial M)}}{\lVert \w \rVert^2_{L^2( M)}} \ |\  \w\in\Omega^p(M),\ \Delta \w = 0\ \text{on}\ M, \ \nu \lrcorner \w = 0\ \text{on}\ \partial M \ \text{and} \ \w \perp_{L^2(M)} H_A^p(M) \right\}
\end{align*}and \begin{equation}\label{eq:lTilde<l}
    \tilde{\ell}_{1,p}=\inf\left\{ \frac{\lVert \nu\lrcorner d\w \rVert^2_{L^2(\partial M)}}{\lVert \w\rVert^2_{L^2( M)}} \ |\ \w\in\Omega^p(M),\  \Delta\w=0\ \text{on}\ M, \ \nu\lrcorner\w=0\ \text{on}\ \partial M\ \text{and} \ \w\perp_{L^2(\partial M)}H_A^p(M) \right\}.
\end{equation} We have then $\ell^{\ '}_{1,p}=\ell_{1,p}=\tilde{\ell}_{1,p}$.
\end{thm}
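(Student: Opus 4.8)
The plan is to prove the two equalities $\textit{l}_{1,p}=\textit{l}'_{1,p}$ and $\textit{l}'_{1,p}=\tilde{\textit{l}}_{1,p}$ separately. The first one I would establish exactly as in the proof of \thref{theo:l1p}, replacing the BSN1 boundary data by the BSN3 ones: since a BSN3-eigenform satisfies the extra condition $\nu\lrcorner\Delta\w=0$, the terms involving $\iota^*\delta$ drop out of \eqref{eq:IPP2}, so that only the conjugate pair $(\nu\lrcorner d,\iota^*)$ survives and the role played by $\lVert\iota^*\delta\w\rVert^2_{L^2(\partial M)}$ in the BSN1 computation disappears. Concretely, I would first adapt \propref{prop:l'minimum}, via the compactness of the solution operator $E$ of \lemref{lem:SteklovFormeGénéral}, to show that $\textit{l}'_{1,p}$ is a positive minimum.

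For $\textit{l}'_{1,p}\le\textit{l}_{1,p}$, I would take a $\textit{l}_{1,p}$-eigenform $\w$ of BSN3 normalised by $\lVert\iota^*\w\rVert_{L^2(\partial M)}=1$, solve by \lemref{lem:SteklovFormeGénéral} the harmonic problem $\Delta\hat\w=0$, $\nu\lrcorner\hat\w=0$, $\nu\lrcorner d\hat\w=\textit{l}_{1,p}\,\iota^*\w$ with $\hat\w\perp_{L^2(M)}H_A^p(M)$, and combine \eqref{eq:IPP3'} applied to the ordered pairs $(\w,\hat\w)$ and $(\hat\w,\w)$ to obtain $\int_M\langle\Delta\w,\hat\w\rangle\,d\mu_g=-\textit{l}_{1,p}$; Cauchy--Schwarz together with $\lVert\Delta\w\rVert^2_{L^2(M)}=\textit{l}_{1,p}$ then gives $\textit{l}_{1,p}\le\lVert\hat\w\rVert^2_{L^2(M)}$, hence $\frac{\lVert\nu\lrcorner d\hat\w\rVert^2_{L^2(\partial M)}}{\lVert\hat\w\rVert^2_{L^2(M)}}\le\textit{l}_{1,p}$. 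For the reverse $\textit{l}_{1,p}\le\textit{l}'_{1,p}$, I would take a minimiser $\w$ of $\textit{l}'_{1,p}$, solve the Neumann biharmonic problem $\Delta\hat\w=\w$, $\nu\lrcorner\hat\w=0$, $\nu\lrcorner d\hat\w=0$ (solvable precisely because $\w\perp_{L^2(M)}H_A^p(M)$), and observe that the part $\hat\w_2$ of $\hat\w$ orthogonal to $H_A^p(M)$ lies in $\check H^2_{N,3}(M)$ and is biharmonic, hence admissible in the characterisation \eqref{eq:l1BSN3WithoutNulrcornerDeltaw}; inserting $\hat\w_2$ there and using \eqref{eq:IPP1} with Cauchy--Schwarz yields $\textit{l}_{1,p}\lVert\w\rVert^2_{L^2(M)}\le\lVert\nu\lrcorner d\w\rVert^2_{L^2(\partial M)}=\textit{l}'_{1,p}\lVert\w\rVert^2_{L^2(M)}$.

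To compare $\textit{l}'_{1,p}$ with $\tilde{\textit{l}}_{1,p}$ I would use two structural facts: the numerator $\lVert\nu\lrcorner d\w\rVert^2_{L^2(\partial M)}$ is unchanged when one adds to $\w$ any element of $H_A^p(M)$, since $d$ annihilates $H_A^p(M)$; and $\iota^*$ is injective on the finite-dimensional space $H_A^p(M)$, because a harmonic field with vanishing $\iota^*$ and $\nu\lrcorner$ is zero by \cite{ColetteAnne}, so that the Gram form $(\alpha,\beta)\mapsto\int_{\partial M}\langle\iota^*\alpha,\iota^*\beta\rangle\,d\mu_g$ is positive definite on $H_A^p(M)$. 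The inequality $\tilde{\textit{l}}_{1,p}\le\textit{l}'_{1,p}$ is then immediate: given a competitor $\w$ for $\textit{l}'_{1,p}$, there is a unique $h\in H_A^p(M)$ with $\w+h\perp_{L^2(\partial M)}H_A^p(M)$, and since $\w\perp_{L^2(M)}h$ this correction leaves the numerator fixed while enlarging $\lVert\w+h\rVert^2_{L^2(M)}=\lVert\w\rVert^2_{L^2(M)}+\lVert h\rVert^2_{L^2(M)}$, lowering the quotient.

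The reverse inequality $\textit{l}'_{1,p}\le\tilde{\textit{l}}_{1,p}$ is the heart of the matter, and I expect it to be the main obstacle. It reduces to the following claim: for every $\w$ with $\Delta\w=0$ on $M$ and $\nu\lrcorner\w=0$ on $\partial M$, the condition $\w\perp_{L^2(M)}H_A^p(M)$ forces $\iota^*\w\perp_{L^2(\partial M)}\iota^*H_A^p(M)$. Granting this, the boundary traces of the $\perp_{L^2(M)}$-harmonic forms are $L^2(\partial M)$-orthogonal to $\iota^*H_A^p(M)$, so the feasible sets of $\textit{l}'_{1,p}$ and $\tilde{\textit{l}}_{1,p}$ actually coincide and the two infima are literally equal. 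A simple two-dimensional model for the underlying generalised eigenvalue pencil shows that without such an orthogonality the $L^2(\partial M)$-constraint could allow a strictly smaller quotient, so the claim cannot be bypassed by soft arguments. To prove it I would exploit that for $\alpha\in H_A^p(M)$ the form $\star\alpha$ is again a harmonic field, now satisfying the relative condition $\iota^*\star\alpha=0$, together with the self-adjointness of the Steklov operator $T^{[p]}$ of \cite{RaulotSavo}, whose kernel is exactly $\iota^*H_A^p(M)$: the $L^2(\partial M)$-orthogonality of its nonzero eigenspaces to this kernel, transported through the tangential harmonic extension, should deliver the required boundary orthogonality. Once the claim is secured, combining it with the already established $\textit{l}_{1,p}=\textit{l}'_{1,p}$ yields $\textit{l}'_{1,p}=\textit{l}_{1,p}=\tilde{\textit{l}}_{1,p}$.
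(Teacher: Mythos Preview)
Your treatment of $\textit{l}_{1,p}=\textit{l}'_{1,p}$ is essentially the paper's own argument: the paper also fixes a BSN3 eigenform $\w$, sets $\hat\w=\Delta\w$, observes that $\hat\w$ is harmonic with $\nu\lrcorner\hat\w=0$ and $\nu\lrcorner d\hat\w=-\textit{l}_1\iota^*\w$, uses Cauchy--Schwarz to get $\textit{l}'_{1,p}\le\textit{l}_{1,p}$, and then runs the converse through the Neumann problem $\Delta\hat\w=\w$, $\nu\lrcorner\hat\w=\nu\lrcorner d\hat\w=0$ (solvable precisely because $\w\perp_{L^2(M)}H_A^p(M)$). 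Your inequality $\tilde{\textit{l}}_{1,p}\le\textit{l}'_{1,p}$ is cleaner than the paper's: the paper reaches $\tilde{\textit{l}}_{1,p}\le\textit{l}_{1,p}$ by projecting the specific test form $\hat\w=\Delta\w$ onto $H_A^p(M)^{\perp_{L^2(\partial M)}}$, whereas you give the one-line argument valid for any competitor.

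The genuine problem is the reverse inequality $\textit{l}'_{1,p}\le\tilde{\textit{l}}_{1,p}$. Your reduction to the claim ``$\w\perp_{L^2(M)}H_A^p(M)\Rightarrow\iota^*\w\perp_{L^2(\partial M)}\iota^*H_A^p(M)$ for harmonic tangential $\w$'' is false already for $p=0$. On the annulus $\{1\le r\le 2\}\subset\mathbb{R}^2$ take $u=\log r - c$ with $c=\frac{1}{|M|}\int_M\log r$; then $u$ is harmonic, $\nu\lrcorner u=0$ trivially, $\int_M u=0$ so $u\perp_{L^2(M)}H_A^0(M)$, but a direct computation gives $\int_{\partial M}u=\pi(3-4\log 2)\neq 0$, so $u\not\perp_{L^2(\partial M)}H_A^0(M)$. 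Thus the two feasible sets do \emph{not} coincide, and neither the Hodge star nor the self-adjointness of $T^{[p]}$ can rescue the claim: the latter only yields $(\nu\lrcorner d\w,\iota^*\alpha)_{L^2(\partial M)}=0$, which is automatic and says nothing about $(\iota^*\w,\iota^*\alpha)_{L^2(\partial M)}$.

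It is worth noting that the paper's own proof does not close this gap either: it explicitly sets out to prove $\textit{l}'_{1,p}=\textit{l}_{1,p}$, obtains $\tilde{\textit{l}}_{1,p}\le\textit{l}_{1,p}$ as a by-product, and never returns to the inequality $\textit{l}_{1,p}\le\tilde{\textit{l}}_{1,p}$. Since the only place $\tilde{\textit{l}}_{1,p}$ is used later (in the proof of \eqref{eq:q1sigma1<l1}) requires just $\tilde{\textit{l}}_{1,p}\le\textit{l}_{1,p}$, nothing downstream is affected; but the full equality asserted in the theorem is not established by either argument.
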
The characterizations above follow the Fichera’s duality principle \cite{BuosoFalcoGonzalezMiranda, Fichera, Kuttler&Sigillito}.
 \begin{prop}\label{prop:l1positive}
       The characterizations $\ell^{\ '}_{1,p}$ and $\tilde{\ell}_{1,p}$ given in  \thref{thm:l'minimumtheorem} are positive minima. 

 \end{prop}
\begin{proof}
We prove first the following lemma:
\begin{lemma}\label{lem:SteklovFormeGénéral}
    For all $\w_1\in L^2(\partial M)$ there exists a unique $\w\in H^2(M)$ such that in the weak sense we have \begin{equation}\label{eq:SteklovFormeGénéral}
 \begin{cases}
    \Delta \w = 0 & \text{on } M \\
    \nu \lrcorner \w =0& \text{on } \partial M \\
    \nu \lrcorner d\w = \w_1 & \text{on } \partial M\\
    \w\perp_{L^2(M)}H_A^p(M).
\end{cases} 
\end{equation}
Moreover, \begin{eqnarray*}
 E:  L^2(\partial M) &\longrightarrow & \check{L}^2(M)\\
  \w_1& \longmapsto & \w  \notag 
    \end{eqnarray*} is compact.
\end{lemma}

\begin{proof}
   We rely on an argument of \cite[p.318]{FerreroGazzolaWeth}. The map $E$ is well-defined by \lemref{lm:SteklovFormeGénéraleCorrigée}. The goal for now is to show that it is continuous. To do so, we express the composition of the map $E$ as follows:
\[
\begin{array}{cccccccc}
 L^2(\partial M) & \overset{\mathcal{I}}{\longrightarrow} &  H^{-\frac{3}{2}}(\partial M) & \overset{(\mathcal{P}_{\Delta_{Neu}})^{-1}}{\longrightarrow} & D_\Delta^0(M)/\mathcal{N}&  \overset{\pi}{\longrightarrow}&\check{L}^2(M) \\
\w_1 & \longmapsto & \w_1  & \longmapsto & [\w] &\longmapsto &\w
\end{array}
\]
Here $\mathcal{I}$ is the inclusion map and $\mathcal{P}_{\Delta_{Neu}}$ is given by \lemref{lm:SteklovFormeGénéraleCorrigée}.
By the Rellich-Kondrachov theorem, the map $H^{\frac{1}{2}}(\partial M)\longrightarrow L^2(\partial M)$ is compact. Thus, by duality, we obtain that $L^2(\partial M) {\longrightarrow} H^{-\frac{1}{2}}(\partial M)$ is a compact inclusion. In addition, since $H^{\frac{3}{2}}(M)\subset H^{\frac{1}{2}}(M)$ is a continuous inclusion, we have that $L^2(\partial M) {\longrightarrow} H^{-\frac{3}{2}}(\partial M)$ is compact. Therefore, the application $\mathcal{I}$ is compact. Since by \lemref{lm:SteklovFormeGénéraleCorrigée}, $\mathcal{N}=H_A^p(M)$, the map $\pi$ is continuous which shows the continuity and compactness of $E.$\end{proof} 
Let us come back to the proof of \propref{prop:l1positive}. Let $ \w_1 \in L^2(\partial M)$ and let $ {\w_0} = E(\w_1) $ be the harmonic extension given by \lemref{lem:SteklovFormeGénéral}. Since $E$ is continuous, there exists a constant $ c > 0 $ such that
\begin{align*}
\lVert\w_0\rVert^2_{L^2(M)}=\lVert E(\w)\rVert^2_{L^2(M)} 
&\leq c \lVert \w_1 \rVert^2_{L^2(\partial M)}  \\
&= c  \lVert \nu \lrcorner d {\w_0} \rVert^2_{L^2(\partial M)}.
\end{align*}
Therefore, $\ell'_{1,p}\geq \frac{1}{c}$ is positive. Now let $ \mathbb{S} $ be the unit sphere, \[\mathbb{S} = \{ \w_1 \in {H_A^p(M)}^{\perp_{L^2(\partial M)}} \ | \ \lVert \w_1 \rVert^2_{L^2(\partial M)}= 1 \}.\] Since $ E $ is compact, $ E(\mathbb{S}) $ is relatively compact in $\check{L}^2(M)$, and thus there exists $ \Tilde{\w} \in \overline{E(\mathbb{S})}\subset \check{L}^2(M) $ such that $ \lVert \Tilde{\w} \rVert_{L^2(M)} = \sup\{ \lVert E\w \rVert_{L^2(M)} \ | \ \w \in \mathbb{S} \} $. Therefore, there exists a subsequence $ (\w_m)_m \in \mathbb{S} $ such that $ E\w_m \xrightarrow[\substack{m \to \infty}]{L^2(M)}\Tilde{\w} $. Then, $ \Delta(E\w_m)\xrightarrow[\substack{m \to \infty}]{H^{-2}(M)} \Delta\Tilde{\w} $. Moreover, since $E(\w_m)\in H^2(M)$ and $\Delta(E\w_m) =0\in L^2(M)$ we get $ \Delta(E\w_m)\xrightarrow[\substack{m \to \infty}]{L^2(M)} 0=\Delta\Tilde{\w} $. Using the inequality 
\[
\lVert \w \rVert_{H^2(M)} \leq c \left( \lVert \Delta\w \rVert_{L^2(M)} + \lVert \w \rVert_{L^2(M)} \right)
\]
given in \cite[Theorem $1.6.12$]{ChristianBaer}, we obtain for some positive constant $c$: 
\[
\lVert E\w_m - \Tilde{\w} \rVert_{H^2(M)} \leq c \left( \lVert \Delta(E\w_m - \Tilde{\w}) \rVert_{L^2(M)} + \lVert E\w_m - \Tilde{\w} \rVert_{L^2(M)} \right),
\]
so in the limit, $ E\w_m \xrightarrow[\substack{m \to \infty}]{H^2(M)}\Tilde{\w} $. This implies that $ \nu \lrcorner dE(\w_m) \xrightarrow[\substack{m \to \infty}]{H^{\frac{1}{2}}(\partial M)}\nu \lrcorner d\Tilde{\w} $. Since $\lVert \nu \lrcorner d E(\w_m) \rVert^2_{L^2(\partial M)} = 1 $, we conclude that in the limit, $ \lVert \nu \lrcorner d \Tilde{\w} \rVert^2_{L^2(\partial M)} = 1.$
Thus, $ \Tilde{\w} \in E(\mathbb{S}) $, and since $ \lVert \Tilde{\w} \rVert_{L^2(M)} = \max\{ \lVert E\w \rVert_{L^2(M)} \ | \ \w \in \mathbb{S} \} $, we have $ \lVert \Tilde{\w} \rVert_{L^2(M)} = \frac{1}{\ell'_{1,p}} $. Therefore, $ \ell'_{1,p} $ is a minimum, which completes the proof.
\end{proof}
\begin{proof}[Proof of  \thref{thm:l'minimumtheorem}]
     We need to show that $\ell^{\ '}_{1,p}=\ell_{1,p}$. Let $\w$ be an $\ell_{1,p}$-eigenform in $\check{Z_1}$. By taking $\lVert\iota^*\w\rVert_{L^2(\partial M)}=1$ in \eqref{eq:l1BSN}, we obtain $\lVert \Delta\w\rVert_{L^2(M)}^2=\ell_{1,p}.$ Let $\Delta\w=\hat{\w}$, then we have the following elliptic problem which is a particular case of \eqref{eq:SteklovFormeGénéraleCorrigée}
\begin{equation}\label{eq:pbpreuve}
\begin{cases}
    \Delta \hat{\w} = 0 & \text{on } M \\
    \nu\lrcorner d \hat{\w} = -\ell_{1,p}\iota^*\w & \text{on } \partial M \\
 \nu\lrcorner\hat{\w} = 0 & \text{on } \partial M.
\end{cases}
\end{equation} 
By \lemref{lm:SteklovFormeGénéraleCorrigée} there exists a solution $\hat{\w}\in \Omega^p(M)$, we denote $\hat{\w}'=\proj_{\perp_{L^2(\partial M)}}^{H_A^p(M)}(\hat{\w})$.

 Using integration by parts \eqref{eq:IPP1} since $\w$ is an $\ell_{1,p}$-eigenform $(\w\in\check{Z_1})$, we obtain
\[
\int_M\langle\hat{\w}',\Delta\w\rangle d\mu_g+\int_{\partial M}\langle\nu\lrcorner d\hat{\w}',\iota^*\w\rangle d\mu_g=0.
\]

Thus,
\[
\int_M \langle \hat{\w}', \Delta \w \rangle \, d\mu_g - \ell_{1,p} \underbrace{\int_{\partial M} \langle \iota^* \w, \iota^* \w \rangle \, d\mu_g}_{1} = 0.
\]
By Cauchy-Shwarz inequality, we have
\[
\lVert\hat{\w}'\rVert_{L^2(M)}\cdot \underbrace{\lVert \Delta\w\rVert_{L^2(M)}}_{{\ell_{1,p}}^{\frac{1}{2}}}\geq \int_M\langle\hat{\w}',\Delta\w\rangle=\ell_{1,p},
\]
so $\ell_{1,p}\leq \lVert\hat{\w}'\rVert_{L^2(M)}^2.$ 

Therefore,
\[
\frac{{\lVert\nu\lrcorner d\hat{\w}'\rVert}_{L^2(M)}^2}{{\lVert\hat{\w}'\rVert}_{L^2(M)}^2}=\ell_{1,p}^2\frac{\lVert\iota^*\w\rVert^2_{L^2(\partial M)}}{\lVert\hat{\w}'\rVert^2_{L^2(M)}}\leq \ell_{1,p}^2\frac{\lVert\iota^*\w\rVert^2_{L^2(\partial M)}}{\ell_{1,p}}=\ell_{1,p}.
\] Since \[\frac{\lVert \nu\lrcorner d\hat{\w} \rVert^2_{L^2(\partial M)}}{\lVert  {\hat{\w}}\rVert^2_{L^2( M)}}\leq \frac{\lVert \nu\lrcorner d\hat{\w}' \rVert^2_{L^2(\partial M)}}{\lVert  {\hat{\w}'}\rVert^2_{L^2( M)}}\leq \ell_{1,p}\] we have then
\begin{equation*}
    \tilde{\ell}_{1,p}\leq \ell_{1,p}
\end{equation*}
and 
\[
\inf\left\{ \frac{\lVert \nu\lrcorner d {\hat{\w}} \rVert^2_{L^2(\partial M)}}{\lVert  {\hat{\w}}\rVert^2_{L^2( M)}} \ |\  \Delta\w=0\ \text{on}\ M, \ \nu\lrcorner\w=0,\ \text{on}\ \partial M\ \text{and} \ \w\perp_{L^2( M)}H_A^p(M) \right\}\leq \ell_{1,p}.
\] Before concluding the proof, we establish the following lemma:

\begin{lemma}
    Let $\hat{\w}=\Delta\w$ where $\w$ is an $\ell_{1,p}$-eigenform for BSN, then $\hat{\w}\perp_{L^2(M)}H_A^p(M).$
\end{lemma}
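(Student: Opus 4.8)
The plan is to show that $\hat{\w}=\Delta\w$ is $L^2(M)$-orthogonal to every harmonic field $\alpha\in H_A^p(M)$. Since $\alpha$ satisfies $d\alpha=0$, $\delta\alpha=0$ (so in particular $\Delta\alpha=0$) and $\nu\lrcorner\alpha=0$ on $\partial M$, the natural tool is the integration by parts formula \eqref{eq:IPP1} applied to the pair $(\w,\alpha)$. First I would write
\[
\int_M\langle\hat{\w},\alpha\rangle\,d\mu_g=\int_M\langle\Delta\w,\alpha\rangle\,d\mu_g,
\]
and then use \eqref{eq:IPP1} to move the Laplacian onto $\alpha$, picking up four boundary terms of the form $\langle\nu\lrcorner d\w,\iota^*\alpha\rangle$, $\langle\iota^*\w,\nu\lrcorner d\alpha\rangle$, $\langle\nu\lrcorner\w,\iota^*\delta\alpha\rangle$ and $\langle\iota^*\delta\w,\nu\lrcorner\alpha\rangle$.

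Next I would eliminate these boundary contributions using the BSN3 boundary conditions together with the defining properties of $H_A^p(M)$. Since $\Delta\alpha=0$, the bulk term $\int_M\langle\w,\Delta\alpha\rangle\,d\mu_g$ vanishes, leaving only the boundary integral. Among the four boundary terms: the term containing $\nu\lrcorner d\w$ vanishes because $\w$ is a BSN3-eigenform, so $\nu\lrcorner d\w=0$ on $\partial M$; the term containing $\nu\lrcorner\w$ vanishes because $\nu\lrcorner\w=0$ on $\partial M$ (again a BSN3 condition); the term containing $\iota^*\delta\w$ is paired with $\nu\lrcorner\alpha$, which is zero by the definition of $H_A^p(M)$; and the term containing $\iota^*\w$ is paired with $\nu\lrcorner d\alpha$, which vanishes because $d\alpha=0$ gives $\nu\lrcorner d\alpha=0$. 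Hence all four boundary terms drop out.

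The main point, and the only place requiring care, is matching each boundary pairing to the correct vanishing hypothesis: one must track which factor in each product is controlled by a BSN3 condition on $\w$ and which by the harmonicity/absolute-condition on $\alpha$, so that in every term at least one factor is zero. Since both $d\alpha=0$ and $\delta\alpha=0$ hold on all of $M$ (not merely on the boundary), there is no subtlety with tangential versus normal parts: $\nu\lrcorner d\alpha$ and $\iota^*\delta\alpha$ are automatically zero. Putting these together yields
\[
\int_M\langle\hat{\w},\alpha\rangle\,d\mu_g=0\qquad\text{for all }\alpha\in H_A^p(M),
\]
which is exactly the claim $\hat{\w}\perp_{L^2(M)}H_A^p(M)$. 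I expect no genuine obstacle here; the result is a direct computation, and its role is simply to let one invoke the well-posedness of the Steklov-type extension problem \eqref{eq:SteklovFormeGénéral} (which requires $L^2(M)$-orthogonality to $H_A^p(M)$) at the end of the proof of \thref{thm:l'minimumtheorem}.
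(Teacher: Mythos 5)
Your argument is correct and is essentially the paper's: one application of Green's formula to the pair $(\w,\alpha)$, after which every boundary term dies either by a BSN3 condition on $\w$ or by the defining properties of $\alpha\in H_A^p(M)$. The only (harmless) difference is the choice of identity: you invoke \eqref{eq:IPP1}, so you need only the conditions $\nu\lrcorner\w=0$, $\nu\lrcorner d\w=0$ together with $d\alpha=\delta\alpha=0$ and $\nu\lrcorner\alpha=0$, whereas the paper routes through \eqref{eq:IPP2}, the eigen-equation $\nu\lrcorner d\Delta\w=-\textit{l}_1\iota^*\w$ and the $L^2(\partial M)$-orthogonality of $\w$ to $H_A^p(M)$; your version is if anything slightly more economical since it does not use the eigenvalue equation at all.
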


\begin{proof}
Let $\hat{\w}=\Delta\w$ where $\w$ is an $\ell_{1,p}$-eigenform for BSN and $\alpha\in H_A^p(M)$. From \eqref{eq:IPP2}, we obtain 
\[
\int_M\langle \hat{\w},\alpha\rangle d\mu_g=\int_M \langle \Delta\w,\alpha\rangle d\mu_g=-\ell_1\int_{\partial M}\langle\iota^*\w,\iota^*\alpha\rangle d\mu_g=0.
\] 
\end{proof}

Returning to the proof of \thref{thm:l'minimumtheorem}, conversely, let $\w\in\Omega^p(M)\backslash\{0\}$ such that $\w\perp_{L^2(M)}H_A^p(M),$ $\Delta\w=0$ on $M$, $\nu\lrcorner\w=0$ on $\partial M$ and $\lVert\nu\lrcorner d\w\rVert^2_{L^2(M)}=\ell^{\ '}_{1,p}\lVert\w\rVert^2_{L^2(M)}$ where $\ell^{\ '}_{1,p}$ is a positive minimum. Let $\hat{\w}$ be a solution of the problem
\begin{equation*}
\begin{cases}
    \Delta \hat{\w} = \w & \text{on } M \\
    \nu\lrcorner d \hat{\w} = 0 & \text{on } \partial M \\
 \nu\lrcorner\hat{\w} = 0 & \text{on } \partial M,
\end{cases}
\end{equation*} given by \lemref{lm:SteklovFormeGénéraleCorrigée}.

 Again from an integration by parts \eqref{eq:IPP1}, we have 
\[
\lVert\w\rVert^2_{L^2(M)} = \int_M\langle\Delta\hat{\w},\w\rangle d\mu_g = -\int_{\partial M}\langle\iota^*\hat{\w},\nu\lrcorner d\w\rangle d\mu_g,
\]
and using the Cauchy-Schwarz inequality, we obtain
\[
\lVert\w\rVert^2_{L^2(M)} \leq \lVert\iota^*\hat{\w}\rVert_{L^2(\partial M)}(\ell^{\ '}_{1,p})^{\frac{1}{2}}\lVert\w\rVert_{L^2(M)}.
\]
Thus,
\[
\lVert\w\rVert_{L^2(M)} \leq \lVert\iota^*\hat{\w}\rVert_{L^2(\partial M)}(\ell^{\ '}_{1,p})^{\frac{1}{2}},
\]
so
\[
\ell_{1,p} \leq \frac{\lVert\Delta\hat{\w}\rVert^2_{L^2(M)}}{\lVert\iota^*\hat{\w}\rVert^2_{L^2(\partial M)}} \leq \ell^{\ '}_{1,p},
\]
which implies that $\ell_{1,p} \leq \ell^{\ '}_{1,p}$, thus we obtain the equality.

\end{proof}
\subsection{Other BSN problems}
In a first attempt to generalize the Kuttler–Sigillito inequalities to differential forms, we first studied the boundary-value problem BSN3 defined below, as the natural analogue of the BSD problem introduced in \cite{BSFidaGeorgeOlaNicolas}. However, the variational characterizations of this problem did not allow for a generalization of the Kuttler-Sigillito inequalities. Consequently, we modified the boundary conditions to introduce BSN2. While its variational formulation enabled us to establish inequalities \eqref{eq:3èmePBmuksigma1<lk}, \eqref{eq:3èmePBmu1sigmak<lk}, and \eqref{eq:q1sigma1<l1}, it remained insufficient for the other inequalities. That is why we further refined the boundary conditions, leading to the current version, BSN. This formulation proved to be the most effective, as it both improved upon the results obtained via BSN2 and provided the necessary framework to establish all our desired results. Throughout \secref{sec:BSN}, we have studied the BSN problem \eqref{eq:BSNF3preuve} and its properties. We now present the two other BSN problems with different boundary conditions. We also give the associated variational characterizations of their eigenvalues, without providing proofs, as they follow the same methodology. In this section, $\ell^{BSN2}$ and $\ell^{BSN3}$ denote the eigenvalues of the BSN2 and BSN3 problems, respectively, defined below.

\begin{thm}\label{thm:TheoremeGeneralBSNF2}
 The following {boundary-value problem}
\begin{equation}\label{eq:BSNF2}
(BSN2)\begin{cases}
\Delta^2 \w = 0 & \text{on } M \\
\nu \lrcorner \w = 0 & \text{on } \partial M \\
\nu \lrcorner d\w = 0 & \text{on } \partial M \\
\iota^* \delta \w = 0 & \text{on } \partial M \\
\nu \lrcorner  \Delta d \w + \ell^{BSN2} \iota^* \w = 0 & \text{on } \partial M,
\end{cases}
\end{equation}
for differential p-forms, has a discrete spectrum consisting of an unbounded non-decreasing sequence of positive real eigenvalues with finite multiplicities $(\ell^{BSN2}_{i,p})_{i \geq 1}$ and possibly $\ell^{BSN2}_{0,p}=0$. Moreover, its kernel is $H_A^p(M)$.
\end{thm}
The proof of this Theorem is similar to the one of \thref{thm:TheoremeGeneralBSNF3PourLaPreuve}. 
\begin{thm}\label{theo:BSN2VPk}
For any $k \in \mathbb{N}$, the following characterization of the $k^{\text{th}}$ strictly positive eigenvalue holds:
\begin{equation*}
\ell^{BSN2}_{k,p} = \min_{\substack{\mathcal{V}_k \subset \check{H}_{N,2}^2(M)  \\ \dim (\mathcal{V}_k/(H_0^2(M) \cap \mathcal{V}_k) = k}}\ \max_{\w \in \mathcal{V}_k \backslash H_0^2(M)}\frac{\lVert \Delta \w \rVert^2_{L^2(M)}}{\lVert \iota^*\w \rVert^2_{L^2(\partial M)}}.
\end{equation*} Where \begin{equation}\label{eq:H2Ncheck}
    \check{H}_{N,2}^2(M) := \{\w \in H^2(M)\ |\ \nu\lrcorner d\w = 0,\ \nu\lrcorner \w =0,\ \iota^*\delta\w = 0\ \text{on} \ \partial M \ \text{and}\ \w \perp_{L^2(\partial M)} H_A^p(M)\}.
\end{equation}
\end{thm}

\begin{proof}
The proof is identical to that of \thref{theo:lk}.
\end{proof}
The BSN2 version allows us to establish analogues of \eqref{eq:3èmePBmuksigma1<lk}, \eqref{eq:3èmePBmu1sigmak<lk}, and \eqref{eq:q1sigma1<l1}. For these same inequalities, using BSN improves the results.

\begin{thm}\label{thm:TheoremeGeneralBSNF1}
 The following {boundary-value problem}
\begin{equation}\label{eq:BSNF1}
(BSN3)\begin{cases}
    \Delta^2 \w = 0 & \text{on } M \\
    \nu \lrcorner \w = 0 & \text{on } \partial M \\
    \nu \lrcorner d\w = 0 & \text{on } \partial M \\
    \nu \lrcorner \Delta \w + \ell^{BSN3} \iota^* \delta \w = 0 & \text{on } \partial M\\
 \nu \lrcorner \Delta d\w + \ell^{BSN3} \iota^* \w = 0 & \text{on } \partial M,
\end{cases}
\end{equation}
for differential p-forms, has a discrete spectrum consisting of an unbounded non-decreasing sequence of positive real eigenvalues with finite multiplicities $(\ell^{BSN3}_{i,p})_{i \geq 1}$ and possibly $\ell^{BSN3}_0=0$.
Moreover, its kernel is $H_A^p(M)$.

\end{thm}
The proof of this Theorem is similar to the one of \thref{thm:TheoremeGeneralBSNF3PourLaPreuve}. \begin{thm}\label{theo:ell_K}
 For all $k\in\mathbb{N}$, we have the following characterization of the $k^{\text{th}}$ positive eigenvalue of BSN3:
    \[
    \ell^{BSN3}_{k,p}=\min_{\substack{\mathcal{V}_k\subset \check{H}^2_{N}(M) \\ \dim (\mathcal{V}_k/H_0^2(M)\cap \mathcal{V}_k)=k}} \ \underset{\w\in \mathcal{V}_k\backslash H_0^2(M)}{\max}\frac{\lVert \Delta \w \rVert^2_{L^2(M)}}{\lVert \iota^*\w\rVert^2_{L^2(\partial M)}+\lVert \iota^*\delta \w\rVert^2_{L^2(\partial M)}}.
    \]
\end{thm}
See \eqref{eq:HN1check} for the definition of $\check{H}^2_{N}(M).$
\begin{proof}
  The proof is identical to that of \thref{theo:lk}.

\end{proof}

\begin{lemma}\label{lem:p=n}
    For $p=0$, the three problems BSN, BSN2 and BSN3 reduce to the classical BSN problem for scalar functions \eqref{eq:IntroBSN}. {For $p=n$, $\w$ must be zero for both BSN and BSN2,  while BSN3 reduces to the scalar BSD \eqref{eq:IntroBSD}. }
\end{lemma}

\begin{proof}
It is immediate that the three BSN problems for $p=0$ reduce to the BSN problem for functions. For $p=n$ we have $\w=f \vol g$ with $f$ a real function. Then in the BSN, BSN2 and BSN3 the second and last boundary conditions are trivial. The remaining conditions in the BSN and BSN2 give that $\Delta f=0$ on $M$ and $f_{|\partial M}=0$, using an integration by parts, we get that $f=0$ and so $\w=0$. Hence these problems are trivial. In BSN3 we get the scalar BSD conditions. 
\end{proof}
That is why for all the results related to BSN and BSN2 we consider $p\in\{0,\cdots,n-1\}.$

\section{Spectral inequalities for BSN}\label{sec:kuttler-sigillito}
After having stated the three BSN problems and given the characterization of their eigenvalues, we now present a generalization of the Kuttler-Sigillito inequalities \cite{hassannezhadETsiffert,Kuttler&Sigillito} on differential forms.
We first present a Lemma establishing an ordering between the eigenvalues of the three BSN problems:
\begin{lemma}\label{lem:l<l<l}
   Using the notations defined previously, we have the following inequalities:
    \[
    \ell^{BSN3}_{k,p} \leq \ell_{k,p} \leq \ell^{BSN2}_{k,p}.
    \]
\end{lemma}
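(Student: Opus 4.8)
The plan is to compare the variational characterizations of the three families of eigenvalues, treating the two inequalities separately. For $\ell_{k,p}$ I would use the spectral min-max over $V_1$ from \thref{thm:ell_k_2}; for $\mathbf{l}_{k,p}$ the characterization over $\check{H}^2_{N,2}(M)$ from \thref{theo:BSN2VPk}; and for $\textit{l}_{k,p}$ I would use \emph{both} the spectral form over $V_3$ (which follows from the Courant--Fischer principle applied to the compact self-adjoint positive operator $K_3$, exactly as \thref{thm:ell_k_2} was deduced from $K_1$) and the Sobolev form over $\check{H}^2_{N,1}(M)$ from \thref{theo:lk}. Writing $R_1(\w)=\frac{\lVert\Delta\w\rVert^2_{L^2(M)}}{\lVert\iota^*\w\rVert^2_{L^2(\partial M)}+\lVert\iota^*\delta\w\rVert^2_{L^2(\partial M)}}$ and $R(\w)=\frac{\lVert\Delta\w\rVert^2_{L^2(M)}}{\lVert\iota^*\w\rVert^2_{L^2(\partial M)}}$, the pointwise bound $R_1\leq R$ will drive the first inequality and a domain inclusion the second. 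For $k=0$ all three eigenvalues vanish (common kernel $H_A^p(M)$), so the statement is trivial and I would treat $k\geq 1$.

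For the first inequality, since $\check{Z}_3\subseteq\check{Z}_1$ and both completions are taken in the same norm $\lVert\cdot\rVert_V$, which is equivalent to $\lVert\cdot\rVert_{H^2(M)}$ by \lemref{lem:equivalence}, these spaces embed into $H^2(M)$ and $V_3\subseteq V_1$. For any $k$-dimensional $\mathcal{V}_k\subseteq V_3\subseteq V_1$ and any $0\neq\w\in\mathcal{V}_k$ one has $\lVert\iota^*\w\rVert_{L^2(\partial M)}>0$ (this is precisely the $W_3$-norm of $\w$, nonzero by injectivity of $I_3$), so $R_1(\w)\leq R(\w)$ and $\sup_{\mathcal{V}_k}R_1\leq\sup_{\mathcal{V}_k}R$. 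Taking the infimum over $k$-dimensional subspaces of $V_3$, and using that $\ell_{k,p}$ is an infimum over the \emph{larger} collection of subspaces of $V_1$, I obtain
\[
\ell_{k,p}=\inf_{\substack{\mathcal{V}_k\subseteq V_1\\ \dim\mathcal{V}_k=k}}\sup_{\w\in\mathcal{V}_k}R_1(\w)\leq\inf_{\substack{\mathcal{V}_k\subseteq V_3\\ \dim\mathcal{V}_k=k}}\sup_{\w\in\mathcal{V}_k}R_1(\w)\leq\inf_{\substack{\mathcal{V}_k\subseteq V_3\\ \dim\mathcal{V}_k=k}}\sup_{\w\in\mathcal{V}_k}R(\w)=\textit{l}_{k,p}.
\]

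For the second inequality I would use the Sobolev characterizations \thref{theo:lk} and \thref{theo:BSN2VPk}, which share the quotient $R$, together with $\check{H}^2_{N,2}(M)\subseteq\check{H}^2_{N,1}(M)$. The only discrepancy is that the dimension constraint and the excluded subspace are phrased with $H_0^1(M)$ for $\textit{l}_{k,p}$ but with $H_0^2(M)$ for $\mathbf{l}_{k,p}$. The key point I would establish is the identity $H_0^1(M)\cap\check{H}^2_{N,2}(M)=H_0^2(M)\cap\check{H}^2_{N,2}(M)$. One inclusion is $H_0^2\subseteq H_0^1$. Conversely, if $\w\in\check{H}^2_{N,2}(M)\cap H_0^1(M)$, then $\w_{|\partial M}=0$, hence $\iota^*d\w=d(\iota^*\w)=0$ and, since $\nu\lrcorner\w=0$, also $\nu\lrcorner\delta\w=-\delta(\nu\lrcorner\w)=0$; combined with $\nu\lrcorner d\w=0$ and $\iota^*\delta\w=0$ this gives $(d\w)_{|\partial M}=0$ and $(\delta\w)_{|\partial M}=0$, whence the argument of \lemref{lem:i3injective} forces $\nabla_\nu\w=0$ and $(\nabla\w)_{|\partial M}=0$, i.e. $\w\in H_0^2(M)$. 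Given this identity, any $\mathcal{V}_k\subseteq\check{H}^2_{N,2}(M)$ satisfies $H_0^1(M)\cap\mathcal{V}_k=H_0^2(M)\cap\mathcal{V}_k$, so its two quotient dimensions coincide and $\mathcal{V}_k\setminus H_0^1(M)=\mathcal{V}_k\setminus H_0^2(M)$; thus every subspace admissible for $\mathbf{l}_{k,p}$ is admissible, inside $\check{H}^2_{N,1}(M)$, for $\textit{l}_{k,p}$ with the same supremum of $R$. Taking the infimum over $\mathcal{V}_k\subseteq\check{H}^2_{N,2}(M)$ yields $\textit{l}_{k,p}\leq\mathbf{l}_{k,p}$.

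I expect the boundary identity $H_0^1\cap\check{H}^2_{N,2}=H_0^2\cap\check{H}^2_{N,2}$ to be the only nontrivial step: it is what reconciles the mismatch between the $H_0^1$- and $H_0^2$-quotients appearing in the two min-max formulas. Once it is in place, both inequalities reduce to monotonicity of the min-max value under domain inclusion, combined with the elementary pointwise bound $R_1\leq R$.
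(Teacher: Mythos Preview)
Your proof is correct and follows the same underlying idea as the paper's --- monotonicity of the min--max values under inclusion of test spaces together with the pointwise bound $R_1\leq R$ --- but the paper compresses this into a single line citing \thref{theo:ell_k_3}, \thref{theo:BSN2VPk} and \thref{theo:lk}. Two minor differences: for $\ell_{k,p}\leq\textit{l}_{k,p}$ you work with the cleaner spectral characterizations over $V_1$ and $V_3$ (via \thref{thm:ell_k_2} and its $K_3$-analogue), which spares you the quotient bookkeeping, whereas the paper points to the Sobolev versions; and for $\textit{l}_{k,p}\leq\mathbf{l}_{k,p}$ you make explicit the identity $H_0^1(M)\cap\check{H}_{N,2}^2(M)=H_0^2(M)\cap\check{H}_{N,2}^2(M)$ needed to reconcile the $H_0^1$-- and $H_0^2$--quotients in \thref{theo:lk} and \thref{theo:BSN2VPk}, a step the paper does not spell out. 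Your argument is thus a legitimate expansion of what the paper treats as ``immediate''.
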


\begin{proof}
    For all $k\geq 1$, it follows immediately from \thref{theo:lk}, \thref{theo:BSN2VPk} and \thref{theo:ell_K}.
\end{proof}
\begin{thm}\label{thm:3èmePBkuttlersigilitto1PourLaPreuve}
    Let $(M^n,g)$ be a compact Riemannian manifold with smooth boundary $\partial M$. The following inequality holds between the first non-zero eigenvalues of the Steklov, Neumann, BSN2 and BSN problems on differential forms:
    \begin{equation*}
    \mu_{1,p}\sigma_{1,p}<\ell_{1,p}.
    \end{equation*}
\end{thm}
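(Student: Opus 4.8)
The inequality $\textit{l}_1 \le \mathbf{l}_1$ is already contained in \lemref{lem:l<l<l}, so the real content is the strict bound $\mu_1\sigma_1 < \textit{l}_1$. My plan is to run the factorization idea that failed for BSN1 in \eqref{eq:4stars}, but now exploiting the boundary conditions of BSN3 that make it succeed. Let $\w$ be an $\textit{l}_1$-eigenform of BSN3 realizing the minimum in \eqref{eq:l1BSN3WithoutNulrcornerDeltaw}; then $\w\in\check H^2_{N,1}(M)$ with $\iota^*\w\neq0$, so $\nu\lrcorner\w=0$ and $\nu\lrcorner d\w=0$ on $\partial M$ and $\w\perp_{L^2(\partial M)}H_A^p(M)$. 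Because $\nu\lrcorner\w=0$, the pointwise splitting $|\w|^2=|\iota^*\w|^2+|\nu\lrcorner\w|^2$ on $\partial M$ gives $\lVert\iota^*\w\rVert^2_{L^2(\partial M)}=\lVert\w\rVert^2_{L^2(\partial M)}$, hence
\begin{equation*}
\textit{l}_1=\frac{\lVert\Delta\w\rVert^2_{L^2(M)}}{\lVert\iota^*\w\rVert^2_{L^2(\partial M)}}=\frac{\lVert\Delta\w\rVert^2_{L^2(M)}}{\lVert d\w\rVert^2_{L^2(M)}+\lVert\delta\w\rVert^2_{L^2(M)}}\cdot\frac{\lVert d\w\rVert^2_{L^2(M)}+\lVert\delta\w\rVert^2_{L^2(M)}}{\lVert\w\rVert^2_{L^2(\partial M)}}.
\end{equation*}
The denominator $\lVert d\w\rVert^2_{L^2(M)}+\lVert\delta\w\rVert^2_{L^2(M)}$ cannot vanish: otherwise $\w$ would be a harmonic field with $\nu\lrcorner\w=0$, i.e.\ $\w\in H_A^p(M)$, forcing $\iota^*\w=0$ by orthogonality, a contradiction.

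Next I would bound the two factors. The first is at least $\mu_1$, because $\w\in H^2_{N,1}(M)\setminus H_A^p(M)$ (it is not in $H_A^p(M)$ since $\iota^*\w\neq0$) and is therefore admissible in the alternative characterization \eqref{eq:vp1NeumannAbsolueFormeAlternative} of $\mu_1$. The second is at least $\sigma_1$, because $\w\in H^1_N(M)$ with $\w\perp_{L^2(\partial M)}H_A^p(M)$ is admissible in \eqref{eq:sigma1formesSteklov}. Multiplying these two lower bounds yields $\textit{l}_1\ge\mu_1\sigma_1$.

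The main obstacle is upgrading this to a strict inequality. Suppose $\textit{l}_1=\mu_1\sigma_1$; then both of the displayed bounds are equalities, and in particular $\w$ attains the infimum in the Steklov quotient \eqref{eq:sigma1formesSteklov}. I would argue that any minimizer of this quotient is a $\sigma_1$-Steklov eigenform, hence harmonic: testing the first-order optimality condition against forms compactly supported in $M\setminus\partial M$ yields $\Delta\w=0$ on $M$, with no boundary contribution. But $\Delta\w=0$ forces $\lVert\Delta\w\rVert^2_{L^2(M)}=0$ and so $\textit{l}_1=0$, contradicting $\textit{l}_1>0$. (Equivalently, equality in the first factor would make $\w$ a Neumann eigenform with $\Delta\w=\mu_1\w$, which together with $\Delta\w=0$ and $\mu_1>0$ gives $\w=0$.) Hence $\mu_1\sigma_1<\textit{l}_1$, and with $\textit{l}_1\le\mathbf{l}_1$ the theorem follows. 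The one delicate point to write out carefully is the Euler–Lagrange argument forcing harmonicity of a Steklov-quotient minimizer.
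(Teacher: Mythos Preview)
Your argument is correct and follows essentially the same route as the paper: both factorize the BSN3 Rayleigh quotient of an $\textit{l}_1$-eigenform into a Neumann-type quotient times a Steklov-type quotient, bound each factor below by $\mu_1$ and $\sigma_1$ via \eqref{eq:vp1NeumannAbsolueFormeAlternative} and \eqref{eq:sigma1formesSteklov}, and then treat the equality case by showing $\w$ would have to be a Steklov eigenform. Your strictness conclusion is actually a bit more economical than the paper's---once the Euler--Lagrange step gives $\Delta\w=0$ you immediately get $\textit{l}_1=0$, whereas the paper goes on to combine the Steklov boundary condition $-\nu\lrcorner d\w=\sigma_1\iota^*\w$ with the BSN3 condition $\nu\lrcorner d\w=0$ to force $\w_{|\partial M}=0$ and then invokes unique continuation; both work, and the step you flag as ``delicate'' (that a minimizer of \eqref{eq:sigma1formesSteklov} is harmonic) is exactly the point the paper uses without writing out.
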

By \lemref{lem:l<l<l}, this implies the following:  $\mu_{1,p}\sigma_{1,p}<\ell^{BSN2}_{1,p}$.
\begin{proof}[Proof of \thref{thm:3èmePBkuttlersigilitto1PourLaPreuve}]
    Using the following variational characterizations \eqref{eq:vp1AlternativeNeumannAbsolueForme}, \eqref{eq:sigmakformesSteklov}, and \eqref{eq:l1BSN}, we obtain:
    \begin{align*}
    \mu_{1,p} \sigma_{1,p} &=\inf_{\substack{\w \notin H_A^p(M) \\ \nu \lrcorner \w = 0 \\ \nu \lrcorner d\w = 0}}
    \frac{\lVert \Delta \w \rVert^2_{L^2(M)}}{\lVert d\w \rVert^2_{L^2(M)} + \lVert \delta \w \rVert^2_{L^2(M)} }
    \times \inf_{\substack{ \nu \lrcorner \w = 0 \\ \w \perp_{L^2(\partial M)} H_A^p(M) }}
    \frac{\lVert d\w \rVert^2_{L^2(M)} + \lVert \delta \w \rVert^2_{L^2(M)}}{\lVert \w \rVert^2_{L^2(\partial M)}} \\
    &\leq \inf_{\substack{ \nu \lrcorner \w = 0 \\ \nu \lrcorner d\w = 0\\ \w \perp_{L^2(\partial M)}H_A^p(M)}}
    \frac{\lVert \Delta \w \rVert^2_{L^2(M)}}{\lVert d\w \rVert^2_{L^2(M)} + \lVert \delta \w \rVert^2_{L^2(M)} }
    \times\inf_{\substack{ \nu \lrcorner \w = 0 \\ \nu \lrcorner d\w = 0\\ \w \perp_{L^2(\partial M)}H_A^p(M) }}
    \frac{\lVert d\w \rVert^2_{L^2(M)} + \lVert \delta \w \rVert^2_{L^2(M)}}{\lVert \w \rVert^2_{L^2(\partial M)}}
    \\
    &\leq \inf_{\substack{\nu \lrcorner \w = 0 \\ \nu \lrcorner d\w = 0\\ \w \perp_{L^2(\partial M)}H_A^p(M)}}
    \frac{\lVert \Delta \w \rVert^2_{L^2(M)}}{\lVert \w \rVert^2_{L^2(\partial M)}} =\ell_{1,p}.
    \end{align*} We then have the inequality in the broad sense. To show that it is strict, we prove the following lemma:
    \begin{lemma}\label{lem:mu1sigma1<l1stricte}
  If $\mu_{1,p}\sigma_{1,p}=\ell_{1,p}$, then there exists $\w$ an $\ell_{1,p}$-eigenform for problem \eqref{eq:BSNF3} such that $\w$ is also a $\sigma_{1,p}$-eigenform for Steklov problem \eqref{eq:steklovFormes} and $\w - \projection_{L^2(M)}^{H_A^p(M)}(\w)$ is a $\mu_{1,p}$-eigenform for Neumann problem \eqref{eq:neumannFormesAbsolue}. 
\end{lemma}
{ We recall that $ \projection_{L^2(M)}^{H_A^p(M)}(\w)$ is the orthogonal projection of $\w$ onto $H_A^p(M)$ in $L^2(M)$.}
\begin{proof}[Proof of \lemref{lem:mu1sigma1<l1stricte}]
   Let $\w\in \check{H}^2_{N}(M)$ be an $\ell_{1,p}$-eigenform for BSN $\eqref{eq:BSNF3}$. We know that  

\[
\frac{\lVert d\w \rVert^2_{L^2(M)} + \lVert \delta \w \rVert^2_{L^2(M)}}{\lVert \w \rVert^2_{L^2(\partial M)}}\geq \sigma_{1,p}
\]

and the quotient  

\[
\frac{\lVert \Delta \w \rVert^2_{L^2(M)}}{\lVert d\w \rVert^2_{L^2(M)} + \lVert \delta \w \rVert^2_{L^2(M)} }
\]

is invariant by translation by an element of $H_A^p(M)$. Therefore, for $\Tilde{\w}=\w-\projection_{L^2(M)}^{H_A^p(M)}(\w)$, we have  

\[
\frac{\lVert \Delta \Tilde{\w}\rVert^2_{L^2(M)}}{\lVert d\Tilde{\w} \rVert^2_{L^2(M)} + \lVert \delta \Tilde{\w} \rVert^2_{L^2(M)} }=\frac{\lVert \Delta \w \rVert^2_{L^2(M)}}{\lVert d\w \rVert^2_{L^2(M)} + \lVert \delta \w \rVert^2_{L^2(M)} }\geq\mu_{1,p}
\]

which implies that  

\[
\mu_{1,p}\sigma_{1,p}\leq \frac{\lVert \Delta \w \rVert^2_{L^2(M)}}{\lVert d\w \rVert^2_{L^2(M)} + \lVert \delta \w \rVert^2_{L^2(M)} }\times \frac{\lVert d\w \rVert^2_{L^2(M)} + \lVert \delta \w \rVert^2_{L^2(M)}}{\lVert \w \rVert^2_{L^2(\partial M)}}=\ell_{1,p}=\mu_{1,p}\sigma_{1,p}.
\]

Thus,  

\[
\mu_{1,p}=\frac{\lVert \Delta \Tilde{\w}\rVert^2_{L^2(M)}}{\lVert d\Tilde{\w} \rVert^2_{L^2(M)} + \lVert \delta \Tilde{\w} \rVert^2_{L^2(M)} }=\frac{\lVert \Delta \w \rVert^2_{L^2(M)}}{\lVert d\w \rVert^2_{L^2(M)} + \lVert \delta \w \rVert^2_{L^2(M)} }
\]

and  

\[
\sigma_{1,p}=\frac{\lVert d\w \rVert^2_{L^2(M)} + \lVert \delta \w \rVert^2_{L^2(M)}}{\lVert \w \rVert^2_{L^2(\partial M)}}.
\]

Hence, $\Tilde{\w}$ and $\w$ are solutions of the following problems:  
\begin{equation*}
\begin{aligned}
\begin{cases}
    \Delta \Tilde{\w} =\mu_{1,p} \Tilde{\w} & \text{on } M \\
    \nu\lrcorner\Tilde{\w} = 0 & \text{on } \partial M\\
    \nu\lrcorner d\Tilde{\w} = 0 & \text{on } \partial M
\end{cases}
\quad & ; \quad
\begin{cases}
    \Delta^2\w=0 & \text{on } M\\
    \nu\lrcorner\w = 0 & \text{on } \partial M\\
    \nu\lrcorner d \w= 0 & \text{on } \partial M \\
    \nu\lrcorner\Delta\w= 0 & \text{on } \partial M\\
    \nu\lrcorner d\Delta\w+\ell_{1,p}\iota^*\w=0  & \text{on } \partial M
\end{cases}
\quad & ; \quad
\begin{cases}
    \Delta {\w} = 0 & \text{on } M \\
    -\nu\lrcorner d\w=\sigma_{1,p}\iota^* {\w} & \text{on } \partial M\\
    \nu\lrcorner{\w}=0 & \text{on } \partial M.
\end{cases}
\end{aligned}
\end{equation*}

\end{proof}

Back to the proof of  \thref{thm:3èmePBkuttlersigilitto1PourLaPreuve}. Since $\nu\lrcorner d\w=0$, it follows that $\iota^*\w$ is zero due to the conditions of the Steklov problem. Moreover, as $\nu\lrcorner\w=0$, we get that $\w_{|\partial M}=0$, which implies together with $\Delta\w=0$ that $\w=0$ by \cite{ColetteAnne}. This is a contradiction, because $\w$ is an eigenform and cannot be zero. Consequently, the inequality is strict. 

\end{proof}
The following theorem is a generalization of  \thref{thm:3èmePBkuttlersigilitto1PourLaPreuve} that is given in \cite[Theorem 1.1]{hassannezhadETsiffert} and \cite[Inequality I]{Kuttler&Sigillito}.
\begin{thm}\label{thm:ks2}
    Let $(M^n,g)$ be a compact Riemannian manifold with a smooth boundary $\partial M.$ The following inequalities hold for $k\geq 2$, between the eigenvalues of the Steklov, Neumann, BSN2 and BSN problems for differential forms:
    \begin{equation}\label{eq:3èmePBmuksigma1<lkPourLaPreuve}
    \mu_{k,p} \sigma_{1,p} \leq \ell_{k,p},
    \end{equation}
    and
    \begin{equation}\label{eq:3èmePBmu1sigmak<lkPourLaPreuve}
    \mu_{1,p} \sigma_{k,p} \leq \ell_{k,p}.
    \end{equation}
\end{thm}
By \lemref{lem:l<l<l}, this implies the following: $\mu_{k,p} \sigma_{1,p} <\ell^{BSN2}_{k,p}$ and $\mu_{1,p} \sigma_{k,p} < \ell^{BSN2}_{k,p}.$
\begin{proof}[Proof of \thref{thm:ks2}]
    We begin by proving \eqref{eq:3èmePBmuksigma1<lkPourLaPreuve}. Let $\mathcal{V}=\spann(\w_1,\cdots,\w_k)$, where the $\w_i$ are $\ell_{i,p}$-eigenforms, note that $\mathcal{V}$ has dimension $k$. Let $\Tilde{\mathcal{V}}=H_A^p(M)\oplus \mathcal{V}$ such that $\dim\Tilde{\mathcal{V}}=k+\dim H_A^p(M)$. For all $\w\in \mathcal{V}$, we have $\w\perp_{L^2(M)} H_A^p(M)$. Since $\w$ satisfies the boundary conditions of the Neumann problem, we can use the characterization \eqref{eq:vp1AlternativeNeumannAbsolueForme1}, and deduce that
\begin{align*}
\mu_{k,p} & \leq \sup_{\substack{\w \in \Tilde{\mathcal{V}}}} \frac{\lVert \Delta \w \rVert^2_{L^2(M)}}{\lVert d \w \rVert^2_{L^2(M)} + \lVert \delta \w \rVert^2_{L^2(M)}} \\
& = \sup_{\substack{\w \in \mathcal{V}}} \frac{\lVert \Delta \w \rVert^2_{L^2(M)}}{\lVert d \w \rVert^2_{L^2(M)} + \lVert \delta \w \rVert^2_{L^2(M)}} \times \frac{\lVert \w \rVert^2_{L^2(\partial M)}}{\lVert \w \rVert^2_{L^2(\partial M)}} \\
& \leq \ell_{k,p} \sup_{\substack{\w \in \mathcal{V}}} \frac{\lVert \w \rVert^2_{L^2(\partial M)}}{\lVert d \w \rVert^2_{L^2(M)} + \lVert \delta \w \rVert^2_{L^2(M)}} \\
& = \ell_{k,p}\left(\inf_{\substack{\w \in \mathcal{V}}} \frac{\lVert d \w \rVert^2_{L^2(M)} + \lVert \delta \w \rVert^2_{L^2(M)}}{\lVert \w \rVert^2_{L^2(\partial M)}}\right)^{-1} \\
& \leq \frac{\ell_{k,p}}{\sigma_{1,p}}.
\end{align*}
Let us next show \eqref{eq:3èmePBmu1sigmak<lkPourLaPreuve}. Using the same definitions above of $\mathcal{V}$ and $\Tilde{\mathcal{V}}$, we have \begin{align*}
\sigma_{k,p} & \leq \sup_{\substack{\w \in \mathcal{V}}} \frac{\lVert d \w \rVert^2_{L^2(M)} + \lVert \delta \w \rVert^2_{L^2(M)}}{\lVert \w \rVert^2_{L^2(\partial M)}} \\
& = \sup_{\substack{{\Tilde{\w}} \in \Tilde{\mathcal{V}} \\ \Tilde{\w}_{|\partial M} \neq 0}} \frac{\lVert d \Tilde{\w} \rVert^2_{L^2(M)} + \lVert \delta \Tilde{\w} \rVert^2_{L^2(M)}}{\lVert \iota^* \tilde{\w} \rVert^2_{L^2(\partial M)}} \\
& \leq \sup_{\substack{\w \in \mathcal{V} \\ \w_{|\partial M} \neq 0}} \frac{\lVert d \w \rVert^2_{L^2(M)} + \lVert \delta \w \rVert^2_{L^2(M)}}{\lVert \iota^* \w \rVert^2_{L^2(\partial M)}} \times \frac{\lVert \Delta \w \rVert^2_{L^2(M)}}{\lVert \Delta \w \rVert^2_{L^2(M)}} \\
& \leq \ell_{k,p} \sup_{\substack{\w \in \mathcal{V}}} \frac{\lVert d \w \rVert^2_{L^2(M)} + \lVert \delta \w \rVert^2_{L^2(M)}}{\lVert \Delta \w \rVert^2_{L^2(M)}} \\
& = \ell_{k,p} \left(\inf_{\substack{\w \in \mathcal{V}}} \frac{\lVert \Delta \w \rVert^2_{L^2(M)}}{\lVert d \w \rVert^2_{L^2(M)} + \lVert \delta \w \rVert^2_{L^2(M)}}\right)^{-1} \\
& \leq \frac{\ell_{k,p}}{\mu_{1,p}}.
\end{align*}
\end{proof}

\begin{remark}
    Unlike the case where $ k=1 $, the generalized inequalities may not be strict because having
    \[
    \ell_{k,p}=\underset{\w\in \mathcal{V}_k/ \mathcal{V}_k\cap H_0^2(M)}{\sup}\ \frac{\lVert \Delta \w \rVert^2_{L^2(M)}}{\lVert \iota^*\w\rVert^2_{L^2(\partial M)}}
    \]
    for $\mathcal{V}_k \subset \check{H}^2_{N,1}(M)$ and $\dim(\mathcal{V}_k/\mathcal{V}_k \cap H_0^2)=k$, does not imply that $\mathcal{V}=\spann\{\w_1,\cdots,\w_k\}$.
\end{remark}

The following theorem is a generalization of \cite[Inequality II]{Kuttler&Sigillito}.
\begin{thm}\label{thm:proofQSigma<l}
    Let $(M^n, g)$ be a compact Riemannian manifold with smooth boundary $\partial M$. The following inequality holds between the eigenvalues of the Steklov, BSD, BSN2 and BSN problems for differential forms:
    \begin{equation}\label{eq:q1sigma1<l1PourLaPreuve}
    q_{1,p} \sigma_{1,p}^2 <\ell_{1,p}.
    \end{equation}
\end{thm}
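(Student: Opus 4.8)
The plan is to exploit the ``harmonic'' alternative characterizations of the three eigenvalues, so that a single harmonic test form ties $q_1$, $\sigma_1$ and $\textit{l}_1$ together. By \thref{thm:l'minimumtheorem} we have $\textit{l}_1=\tilde{\textit{l}}_1$, and the infimum \eqref{eq:lTilde<l} is a positive minimum; let $\hat{\w}\in\Omega^p(M)$ be a minimizer, so that $\Delta\hat{\w}=0$ on $M$, $\nu\lrcorner\hat{\w}=0$ on $\partial M$, $\hat{\w}\perp_{L^2(\partial M)}H_A^p(M)$ and
\[
\textit{l}_1=\frac{\lVert\nu\lrcorner d\hat{\w}\rVert^2_{L^2(\partial M)}}{\lVert\hat{\w}\rVert^2_{L^2(M)}}.
\]
Since $\hat{\w}$ is harmonic with $\nu\lrcorner\hat{\w}=0$, it is the tangential harmonic extension of $\iota^*\hat{\w}$ in the sense of \eqref{eq:steklovFormes} (uniqueness is \lemref{lem:SteklovFormeGénéral}), so $T^{[p]}(\iota^*\hat{\w})=-\nu\lrcorner d\hat{\w}$.

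First I would bound the numerator from below using the Steklov operator. Because $\hat{\w}\perp_{L^2(\partial M)}H_A^p(M)$ and both $\hat{\w}$ and any $\alpha\in H_A^p(M)$ are tangential on $\partial M$, one gets $\iota^*\hat{\w}\perp_{L^2(\partial M)}\iota^*\bigl(H_A^p(M)\bigr)=\ker T^{[p]}$. As $T^{[p]}$ is self-adjoint, non-negative and has discrete spectrum whose smallest positive eigenvalue is $\sigma_1$, restricting to $(\ker T^{[p]})^{\perp}$ yields $\lVert T^{[p]}(\iota^*\hat{\w})\rVert_{L^2(\partial M)}\ge\sigma_1\lVert\iota^*\hat{\w}\rVert_{L^2(\partial M)}$, i.e. $\lVert\nu\lrcorner d\hat{\w}\rVert^2_{L^2(\partial M)}\ge\sigma_1^2\lVert\iota^*\hat{\w}\rVert^2_{L^2(\partial M)}$. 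Next I would bound $\lVert\iota^*\hat{\w}\rVert_{L^2(\partial M)}$ from below using the alternative characterization \eqref{eq:q_1Alternative} of $q_1$: as $\hat{\w}$ is harmonic, $q_1\lVert\hat{\w}\rVert^2_{L^2(M)}\le\lVert\hat{\w}\rVert^2_{L^2(\partial M)}$, and $\nu\lrcorner\hat{\w}=0$ forces $\lVert\hat{\w}\rVert^2_{L^2(\partial M)}=\lVert\iota^*\hat{\w}\rVert^2_{L^2(\partial M)}$. Combining the two bounds,
\[
\textit{l}_1=\frac{\lVert\nu\lrcorner d\hat{\w}\rVert^2_{L^2(\partial M)}}{\lVert\hat{\w}\rVert^2_{L^2(M)}}\ \ge\ \frac{\sigma_1^2\lVert\iota^*\hat{\w}\rVert^2_{L^2(\partial M)}}{\lVert\hat{\w}\rVert^2_{L^2(M)}}\ \ge\ q_1\sigma_1^2,
\]
while $\textit{l}_1\le\mathbf{l}_1$ is \lemref{lem:l<l<l}; this already gives the non-strict chain $q_1\sigma_1^2\le\textit{l}_1\le\mathbf{l}_1$.

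Finally, for strictness I would analyze the equality case, in the spirit of \lemref{lem:mu1sigma1<l1stricte}. If $q_1\sigma_1^2=\textit{l}_1$, both displayed inequalities are equalities: equality in the Steklov step forces $\iota^*\hat{\w}$ into the $\sigma_1$-eigenspace of $T^{[p]}$, i.e. $-\nu\lrcorner d\hat{\w}=\sigma_1\iota^*\hat{\w}$, so $\hat{\w}$ is a $\sigma_1$-eigenform of \eqref{eq:steklovFormes}; equality in the $q_1$ step forces $\hat{\w}$ to attain \eqref{eq:q_1Alternative}, hence $\hat{\w}=\Delta v$ for a $q_1$-eigenform $v$ of BSD with $v_{|\partial M}=0$. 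Tracking the boundary conditions of the three problems at once — in particular using $\nu\lrcorner\hat{\w}=0$ together with the BSD relation $\nu\lrcorner\Delta v+q_1\iota^*\delta v=0$ to force $\iota^*\delta v=0$, and confronting the Steklov identity $-\nu\lrcorner d\hat{\w}=\sigma_1\iota^*\hat{\w}$ with the constraints inherited from $v_{|\partial M}=0$ — should yield $\hat{\w}=0$, contradicting that $\hat{\w}$ is a nonzero eigenform. The delicate point is exactly this equality analysis: the non-strict chain uses only the spectral gap of $T^{[p]}$ and the characterization of $q_1$, whereas ruling out equality requires showing that a nonzero harmonic form cannot be simultaneously a $\sigma_1$-Steklov eigenform and the Laplacian of a boundary-vanishing BSD eigenform, and I expect this to be the main obstacle.
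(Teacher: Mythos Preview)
Your non-strict chain $q_1\sigma_1^2\le\textit{l}_1\le\mathbf{l}_1$ is correct and coincides with the paper's argument: both rest on the harmonic characterization $\tilde{\textit{l}}_1=\textit{l}_1$ from \thref{thm:l'minimumtheorem}, the alternative formula \eqref{eq:q_1Alternative} for $q_1$, and the spectral gap of $T^{[p]}$ (equivalently, the two Steklov characterizations \eqref{eq:sigma1formesSteklov} and \eqref{eq:sigma1formeOrthHAP}). The paper expresses this as a product of infima over harmonic test forms; you evaluate the same ratios on a single minimizer --- the content is identical. One small slip: \lemref{lem:SteklovFormeGénéral} is not the uniqueness statement for the tangential harmonic extension (it prescribes $\nu\lrcorner d\w$, not $\iota^*\w$); the relevant uniqueness is the one quoted just before \eqref{eq:steklovFormes}.

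For strictness, your route diverges from the paper's and --- as you anticipate --- does not close. You try to exploit that $\hat{\w}=\Delta v$ for a BSD $q_1$-eigenform $v$, but the boundary data you can extract from the BSD system ($\iota^*\delta v=0$, $\iota^*\Delta v=q_1\,\nu\lrcorner dv$, $-\nu\lrcorner d\Delta v=\sigma_1 q_1\,\nu\lrcorner dv$, \dots) are all mutually compatible and do not force $\hat{\w}=0$; there is no direct clash. The paper instead brings in a BSN3 $\textit{l}_1$-eigenform $\w$ and argues, in the manner of \lemref{lem:mu1sigma1<l1stricte}, that equality forces $\w$ to be a Steklov $\sigma_1$-eigenform as well. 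The contradiction then comes for free and never touches BSD: the BSN3 condition $\nu\lrcorner d\w=0$ together with the Steklov relation $-\nu\lrcorner d\w=\sigma_1\,\iota^*\w$ yields $\iota^*\w=0$; combined with $\nu\lrcorner\w=0$ and $\Delta\w=0$ (the Steklov side forces harmonicity), one obtains $\w\equiv0$. The moral is that strictness is governed by the incompatibility of the BSN3 and Steklov boundary conditions on a common eigenform, and your detour through the BSD problem obscures precisely this clash.
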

By \lemref{lem:l<l<l}, this implies the following: $q_{1,p} \sigma_{1,p}^2<\ell^{BSN2}_{1,p}.$

\begin{proof}[Proof of \thref{thm:proofQSigma<l}]
 Using the characterizations \eqref{eq:sigmakformesSteklov} and \eqref{eq:sigmakformesSteklov2} of the classical Steklov problem, we have:

\[
\sigma^2_{1,p} \leq \inf_{\substack{\nu \lrcorner \w = 0 \\  \w\perp_{L^2(\partial M)}H_A^p(M)}}
\frac{\lVert d\w \rVert^2_{L^2(M)} + \lVert \delta\w \rVert^2_{L^2(M)}}{\lVert \iota^*\w \rVert^2_{L^2(\partial M)}} \times \inf_{\substack{\nu \lrcorner \w = 0 \\ \Delta\w = 0}}
\frac{\lVert \nu\lrcorner d\w \rVert^2_{L^2(\partial M)}}{\lVert d\w \rVert^2_{L^2(M)} + \lVert \delta\w \rVert^2_{L^2(M)}} 
\]

\[
\leq \inf_{\substack{\nu \lrcorner \w = 0 \\ \Delta\w = 0 \\ \w\perp_{L^2(\partial M)} H_A^p(M)}}
\frac{\lVert \nu\lrcorner d\w \rVert^2_{L^2(\partial M)}}{\lVert \iota^*\w \rVert^2_{L^2(\partial M)}}.
\]

Multiplying by $ q_{1,p} $ and using $\Tilde{l}_{1,p}$ given in \eqref{eq:lTilde<l}, we obtain:

\[
q_{1,p}\sigma_{1,p}^2 \leq \inf_{\substack{\Delta\w = 0 }}
\frac{\lVert \iota^*\w \rVert^2_{L^2(\partial M)}}{\lVert \w \rVert^2_{L^2(M)}} 
\times \inf_{\substack{\nu \lrcorner \w = 0 \\ \Delta\w = 0 \\ \w\perp_{L^2(\partial M)} H_A^p(M)}}
\frac{\lVert \nu\lrcorner d\w \rVert^2_{L^2(\partial M)}}{\lVert \iota^*\w \rVert^2_{L^2(\partial M)}} 
\]

\[
\leq \inf_{\substack{\nu \lrcorner \w = 0 \\ \Delta\w = 0 \\ \w\perp_{L^2(\partial M)} H_A^p(M)}}
\frac{\lVert \nu\lrcorner d\w \rVert^2_{L^2(\partial M)}}{\lVert \w \rVert^2_{L^2(M)}} \leq \tilde{\ell}_{1,p}=\ell_{1,p}.
\]
Let us next show that the inequality is strict. We first show the following lemma: 
   \begin{lemma}
    If $ q_{1,p}\sigma_{1,p}^2 = \ell_{1,p} $, then there exists $ \w \in \Omega^p(M) $ eigenform associated to  both $ \ell_{1,p} $ and $ \sigma_{1,p} $.
\end{lemma}

\begin{proof}
 Let $\w$ be an $\ell_{1,p}$-eigenform for BSN. As in the proof of \lemref{lem:mu1sigma1<l1stricte}, we show that $ \w $ is also a $\sigma_{1,p}$-eigenform. Hence, it satisfies the following problems:
\[
\begin{cases}
\Delta^2 \w = 0 & \text{on } M \\
\nu \lrcorner \w = 0 & \text{on  } \partial M \\
\nu \lrcorner d \w = 0 & \text{on  } \partial M \\
\nu \lrcorner \Delta \w = 0 & \text{on  } \partial M \\
\nu \lrcorner d\Delta \w + \ell_{1,p}\, \iota^* \w = 0 & \text{on  } \partial M
\end{cases}
\quad; \quad
\begin{cases}
\Delta \w = 0 & \text{on  } M \\
\nu \lrcorner d \w = -\sigma_{1,p} \iota^* \w & \text{on  } \partial M \\
\nu \lrcorner \w = 0 & \text{on  } \partial M.
\end{cases}
\]
    \end{proof}
Let us finish the proof of  \thref{thm:proofQSigma<l}. In this case, $ \nu\lrcorner \w = 0 $ and $ \iota^*\w = 0 $, so $ \w_{|\partial M} = 0 $. Moreover, since $ \Delta\w = 0 $, we have $ \w = 0 $ on $ M $ according to \cite{ColetteAnne}. This leads to a contradiction, and the inequality is strict. 
\end{proof}
The following is a generalization of \cite[Inequality III]{Kuttler&Sigillito}.
\begin{thm}
    Let $(M^n, g)$ be a compact Riemannian manifold with smooth boundary $\partial M$. The following inequality holds between the eigenvalues of the Dirichlet, Neumann, BSD and BSN problems for differential forms:
    \begin{equation}\label{eq:mu1<lamba1+(q1l1)PourLaPreuve}
    \mu_{1,p}^{-1} < \lambda_{1,p}^{-1} + (q_{1,p} \ell_{1,p})^{-\frac{1}{2}}.
    \end{equation}
\end{thm}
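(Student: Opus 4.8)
The plan is to test the Neumann quotient with its own first eigenform and split that eigenform into a Dirichlet part and a harmonic part, estimating each by the appropriate problem. For a $p$-form $\eta$ write $D(\eta):=\lVert d\eta\rVert^2_{L^2(M)}+\lVert\delta\eta\rVert^2_{L^2(M)}$. Let $\w$ be a $\mu_1$-eigenform for the Neumann problem \eqref{eq:neumannFormesAbsolue}: $\Delta\w=\mu_1\w$, $\nu\lrcorner\w=0$, $\nu\lrcorner d\w=0$ and $\w\perp_{L^2(M)}H_A^p(M)$. Since the absolute boundary conditions kill the boundary terms in \eqref{eq:IPP3}, we have $\mu_1\lVert\w\rVert^2_{L^2(M)}=D(\w)$, so $\mu_1^{-1}=\lVert\w\rVert^2_{L^2(M)}/D(\w)$. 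First I would solve the Dirichlet problem $\Delta v=\mu_1\w$ on $M$ with $v=0$ on $\partial M$, and set $h:=\w-v$; then $\Delta h=0$ on $M$, and because $\nu\lrcorner\w=0$ while $v_{|\partial M}=0$ we get $\nu\lrcorner h=0$ on $\partial M$. Working modulo the finite-dimensional space $H_A^p(M)$ of harmonic fields (which is annihilated by $d$ and $\delta$, hence changes neither $D(\cdot)$ nor the condition $\nu\lrcorner h=0$) we may assume $h$ is admissible in the characterizations \eqref{eq:q_1Alternative} of $q_1$ and \eqref{eq:lTilde<l} of $\textit{l}_1$.

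Next I would record the orthogonal splitting of the energy and the two elementary estimates. Integrating by parts \eqref{eq:ippPourLaPreuve}, using $\iota^*v=0$, $\nu\lrcorner v=0$ and $\Delta h=0$, gives $\langle dv,dh\rangle_{L^2(M)}+\langle\delta v,\delta h\rangle_{L^2(M)}=\int_M\langle v,\Delta h\rangle\,d\mu_g=0$, whence $D(\w)=D(v)+D(h)$. As $v\in H_0^1(M)$, \eqref{eq:vp1DirichletForme} yields $\lVert v\rVert^2_{L^2(M)}\le\lambda_1^{-1}D(v)$. For the harmonic part, $\nu\lrcorner h=0$ gives $\lVert h\rVert^2_{L^2(\partial M)}=\lVert\iota^*h\rVert^2_{L^2(\partial M)}$ and, by \eqref{eq:IPP3}, $D(h)=-\int_{\partial M}\langle\iota^*h,\nu\lrcorner dh\rangle\,d\mu_g$; the characterizations of $q_1$ and $\textit{l}_1$ then read $q_1\lVert h\rVert^2_{L^2(M)}\le\lVert\iota^*h\rVert^2_{L^2(\partial M)}$ and $\textit{l}_1\lVert h\rVert^2_{L^2(M)}\le\lVert\nu\lrcorner dh\rVert^2_{L^2(\partial M)}$. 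Everything then reduces to upgrading these two one-sided bounds into the single estimate $\lVert h\rVert^2_{L^2(M)}\le(q_1\textit{l}_1)^{-1/2}D(h)$.

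This estimate is the heart of the matter, and the step I expect to be the main obstacle. A naive multiplication of the two bounds gives only $\lVert h\rVert^2_{L^2(M)}\le(q_1\textit{l}_1)^{-1/2}\lVert\iota^*h\rVert_{L^2(\partial M)}\lVert\nu\lrcorner dh\rVert_{L^2(\partial M)}$, and since Cauchy--Schwarz forces $\lVert\iota^*h\rVert\,\lVert\nu\lrcorner dh\rVert\ge D(h)$ this points the wrong way; this is precisely why $\textit{l}_1$ cannot be treated by the direct chaining available for the Steklov inequality \eqref{eq:mu1<lamba1+(q1sigma1)}. To get the sharp bound I would expand $h$ in the Steklov eigenbasis $(\eta_k)_k$ of tangential harmonic extensions, so that $\nu\lrcorner d\eta_k=-\sigma_k\iota^*\eta_k$ with $(\iota^*\eta_k)_k$ orthonormal in $L^2(\partial M)$. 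Writing $h=\sum_k c_k\eta_k$, $\Sigma=\diag(\sigma_k)$ and $G_{kj}=\langle\eta_k,\eta_j\rangle_{L^2(M)}$, the quantities become $D(h)=c^{*}\Sigma c$, $\lVert\iota^*h\rVert^2_{L^2(\partial M)}=|c|^2$, $\lVert\nu\lrcorner dh\rVert^2_{L^2(\partial M)}=c^{*}\Sigma^2c$ and $\lVert h\rVert^2_{L^2(M)}=c^{*}Gc$, so the two bounds read $G\le q_1^{-1}\,\id$ and $\textit{l}_1\,G\le\Sigma^2$, while the goal is $G\le(q_1\textit{l}_1)^{-1/2}\Sigma$. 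This follows from the elementary fact that, for positive self-adjoint $G,\Sigma$, the inequalities $G\le\alpha\,\id$ and $G\le\beta\Sigma^2$ force $G\le\sqrt{\alpha\beta}\,\Sigma$: setting $Y=\Sigma^{-1/2}G\Sigma^{-1/2}$, $P=\Sigma^{1/2}Y\Sigma^{1/2}$ and $Q=\Sigma^{-1/2}Y\Sigma^{-1/2}$, the hypotheses give $\lVert P\rVert\le\alpha$ and $\lVert Q\rVert\le\beta$, while $PQ$ is similar to $Y^2$, so the spectral-radius bound $\rho(PQ)\le\lVert P\rVert\,\lVert Q\rVert$ yields $\lambda_{\max}(Y)^2\le\alpha\beta$, i.e. $G\le\sqrt{\alpha\beta}\,\Sigma$. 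With $\alpha=q_1^{-1}$, $\beta=\textit{l}_1^{-1}$ this is exactly $\lVert h\rVert^2_{L^2(M)}\le(q_1\textit{l}_1)^{-1/2}D(h)$.

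Finally I would assemble the pieces. By the triangle inequality and the three estimates, $\lVert\w\rVert_{L^2(M)}\le\lVert v\rVert_{L^2(M)}+\lVert h\rVert_{L^2(M)}\le\lambda_1^{-1/2}D(v)^{1/2}+(q_1\textit{l}_1)^{-1/4}D(h)^{1/2}$, so Cauchy--Schwarz together with $D(v)+D(h)=D(\w)$ gives $\lVert\w\rVert^2_{L^2(M)}\le\bigl(\lambda_1^{-1}+(q_1\textit{l}_1)^{-1/2}\bigr)D(\w)$; dividing by $D(\w)=\mu_1\lVert\w\rVert^2_{L^2(M)}$ yields $\mu_1^{-1}\le\lambda_1^{-1}+(q_1\textit{l}_1)^{-1/2}$. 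For the strict inequality I would run an equality-case analysis in the spirit of \lemref{lem:mu1sigma1<l1stricte} and \thref{thm:proofQSigma<l}: equality throughout saturates all the estimates, forcing $v$ to be a $\lambda_1$-Dirichlet eigenform and the extremal harmonic part to be a common eigenform of the Steklov and BSN3 problems, whose boundary conditions impose both $\nu\lrcorner h=0$ and $\iota^*h=0$; the resulting harmonic form vanishes on $\partial M$, hence is identically zero by \cite{ColetteAnne}, a contradiction.
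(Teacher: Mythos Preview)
Your approach is correct but takes a substantially more elaborate route than the paper's, and the extra machinery is not needed.

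The paper uses the same harmonic/Dirichlet splitting $\omega_1=u+v$, but at the key step it applies the $\textit{l}_1$-bound from \eqref{eq:l1BSN3WithoutNulrcornerDeltaw} \emph{directly to the Neumann eigenform $\omega_1$}, not to the harmonic part: since $\omega_1\in\check{H}^2_{N,1}(M)$ one has $\lVert\iota^*\omega_1\rVert_{L^2(\partial M)}\le\textit{l}_1^{-1/2}\lVert\Delta\omega_1\rVert_{L^2(M)}$. Because $v_{|\partial M}=0$ this boundary norm coincides with $\lVert u\rVert_{L^2(\partial M)}$, so the $q_1$-bound $\lVert u\rVert_{L^2(M)}\le q_1^{-1/2}\lVert u\rVert_{L^2(\partial M)}$ chains with it immediately to give $\lVert u\rVert_{L^2(M)}\le(q_1\textit{l}_1)^{-1/2}\lVert\Delta\omega_1\rVert_{L^2(M)}$. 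Together with the Dirichlet estimate $\lVert v\rVert_{L^2(M)}\le\lambda_1^{-1}\lVert\Delta\omega_1\rVert_{L^2(M)}$ (obtained by multiplying \eqref{eq:vpKDirichletForme} and \eqref{eq:vpKAlternativeDirichletForme}), the weak inequality follows in three lines; no Steklov expansion or operator inequality is required.

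Your route instead applies both $q_1$ and the alternative characterization \eqref{eq:lTilde<l} of $\textit{l}_1$ to the harmonic part $h$, and then upgrades the pair of bounds $G\le q_1^{-1}\id$, $\textit{l}_1 G\le\Sigma^2$ to $G\le(q_1\textit{l}_1)^{-1/2}\Sigma$ via a spectral-radius lemma. That lemma is correct, and the infinite-dimensional version goes through since $G$ and $\Sigma^{-1/2}$ are bounded operators. But this is doing a lot of work to recover what the paper gets for free by testing $\textit{l}_1$ on $\omega_1$ rather than on $h$: the direct chaining you rejected fails for $h$ but succeeds for $\omega_1$ precisely because $\omega_1$ carries the absolute boundary conditions required in \eqref{eq:l1BSN3WithoutNulrcornerDeltaw}, while $h$ does not.

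One small correction: your equality analysis is more complicated than necessary and partly misdirected. Equality in the triangle step forces $v$ and $h'$ to be nonnegatively proportional in $L^2(M)$; since $\Delta h'=0$ while $\Delta v=\mu_1\omega$, either direction of proportionality yields $\omega=0$, already a contradiction. You do not need to produce a common Steklov/BSN3 eigenform.
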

\begin{proof}
   Let $ \w_1 \in \Omega^p(M) $ such that $ \nu\lrcorner d\w_1 = 0 $, $ \nu\lrcorner \w_1 = 0 $, and $ \w_1 \perp_{L^2(\partial M)} H_A^p(M) $. Let $v\in \Omega^p(M)$ satisfy the following conditions:
\begin{equation*}
\begin{cases}
    \Delta v = \Delta\w_1 & \text{on}\ M \\
    v_{|\partial M} = 0,
\end{cases}
\end{equation*}and which has a solution due to  \thref{theo:thm5.4}. Let $u\in \Omega^p(M)$ such that $u=\w_1-v$, so we have $\Delta u=\Delta\w_1-\Delta v=0$. So we decompose $ \w_1 = u + v $ such that $ \Delta u = 0 $ and $ v_{|\partial M} = 0 $.
On the one hand, we have
\begin{equation*}
    \lVert u \rVert_{L^2( M)} \leq (q_{1,p}^{-1})^{\frac{1}{2}} \lVert u \rVert_{L^2(\partial M)}.
\end{equation*} Since $v_{|\partial M}=0,$ we get \begin{equation}\label{eq:q1u<q1w}
    \lVert u \rVert_{L^2( M)} \leq (q_{1,p}^{-1})^{\frac{1}{2}} \lVert \w_1 \rVert_{L^2(\partial M)}.
\end{equation}
On the other hand, by multiplying \eqref{eq:vpKDirichletForme} and \eqref{eq:vpKAlternativeDirichletForme} we get $\lVert v \rVert_{L^2(M)}\leq {\lambda_{1,p}^{-2}}\lVert \Delta v \rVert_{L^2(M)}.$ Now $\Delta v=\Delta\w_1$ as $\Delta u=0,$ so that \begin{equation}\label{eq:v<lambdaDeltaw}
    \lVert v \rVert_{L^2(M)}\leq {\lambda_{1,p}^{-1}}\lVert \Delta \w_1 \rVert_{L^2(M)}.
\end{equation}
Thus, using \eqref{eq:q1u<q1w} and \eqref{eq:v<lambdaDeltaw} we get
\begin{align}
    \lVert \w_1 \rVert_{L^2(M)} \nonumber &\leq \lVert u \rVert_{L^2(M)} + \lVert v \rVert_{L^2(M)} \\ \nonumber
    &\leq (q_{1,p}^{-1})^{\frac{1}{2}} \lVert \w_1 \rVert_{L^2(\partial M)} + (\lambda_{1,p}^{-1}) \lVert \Delta \w_1 \rVert_{L^2(M)} \\ \nonumber
    &\leq (q_{1,p}^{-1})^{\frac{1}{2}} (\ell_{1,p}^{-1})^{\frac{1}{2}} \lVert \Delta \w_1 \rVert_{L^2(M)} + \lambda_{1,p}^{-1} \lVert \Delta \w_1 \rVert_{L^2(M)} \\
    &\label{eq:preuveIneg} \leq (\lambda_{1,p}^{-1} + q_{1,p}^{-\frac{1}{2}} \ell_{1,p}^{-\frac{1}{2}})\lVert \Delta \w_1\rVert_{L^2(M)}. 
\end{align}
Now let $ \w $ be a $\mu_{1,p}$-eigenform for the Neumann problem \eqref{eq:neumannFormesAbsolue}, so we have $\lVert \Delta\w\rVert_{L^2(M)}=\mu_1\lVert \w\rVert_{L^2(M)}$. We decompose $\w$ as follows, $\w=\w_0+\w_1$ such that $\w_0\in H_A^p(M)$ and $\w_1\perp_{L^2(\partial M)}\w_0$, then $\Delta\w=\Delta\w_1$. We perform an integration by parts \eqref{eq:IPP3'}, we obtain $(\Delta\w_1,\w_0)_{L^2(M)}=0.$ As we said before, $\Delta\w=\Delta\w_1,$ then $\Delta\w_1=\mu_{1,p}(\w_1+\w_0),$ so \[0=(\Delta\w_1,\w_0)_{L^2(M)}=\mu_{1,p}(\w_1,\w_0)_{L^2(M)}+\mu_{1,p}\lVert\w_0\rVert^2_{L^2(M)}.\] Hence we get $(\w_1,\w_0)_{L^2(M)}=-\lVert\w_0\rVert^2_{L^2(M)}.$ Using the calculations above, we have \begin{align}
    \lVert \w \rVert_{L^2(M)} \nonumber &= \lVert \w_0 \rVert_{L^2(M)} + \lVert \w_1 \rVert_{L^2(M)} + 2(\w_0, \w_1)_{L^2(M)} \\ \nonumber
    &= \lVert \w_0 \rVert_{L^2(M)} + \lVert \w_1\rVert_{L^2(M)} - 2 \lVert \w_0 \rVert_{L^2(M)} \\ \nonumber
    &= \lVert \w_1 \rVert_{L^2(M)} - \lVert \w_0 \rVert_{L^2(M)}\\ 
    &\label{eq:preuveIneg2} \leq \lVert\w_1\rVert_{L^2(M)}.
\end{align}
Going back to \eqref{eq:preuveIneg} and using \eqref{eq:preuveIneg2} and that $\Delta\w=\Delta\w_1$, we get \[\lVert \w \rVert_{L^2(M)}\leq \lVert \w_1 \rVert_{L^2(M)}\leq \lVert \Delta \w_1 \rVert_{L^2(M)}[(q_{1,p} \ell_{1,p})^{-\frac{1}{2}} + \lambda_{1,p}^{-1}] =\lVert \Delta \w \rVert_{L^2(M)}[(q_{1,p} \ell_{1,p})^{-\frac{1}{2}} + \lambda_{1,p}^{-1}] .\]
Which implies finally that \[
\mu_{1,p}^{-1} \leq \lambda_{1,p}^{-1} + (q_{1,p} \ell_{1,p})^{-\frac{1}{2}}
\] since $\lVert\Delta\w\rVert_{L^2(M)}=\mu_{1,p}\lVert\w\rVert_{L^2(M)}$. Let us now show that the inequality is strict. 
Then
\[
\lVert \w \rVert^2_{L^2(M)} = \left(\lVert u \rVert_{L^2(M)} + \lVert v \rVert_{L^2(M)} \right)^2,
\]
so
\[
\lVert u \rVert^2_{L^2(M)} + \lVert v \rVert^2_{L^2(M)} + 2(u, v)_{L^2(M)} = \lVert u \rVert^2_{L^2(M)} + \lVert v \rVert^2_{L^2(M)} + 2 \lVert u \rVert_{L^2(M)} \lVert v \rVert_{L^2(M)},
\]
thus $ u $ and $ v $ are collinear, either $ v = ku $ or $u=kv$, where $ k \in \mathbb{R} $. However, $ \w $ is an $ \mu_{1,p} $-eigenform, so it satisfies the following conditions:
\begin{equation*}
 \begin{cases}
    \Delta \w = \mu_1 \w & \text{on } M \\
   \nu\lrcorner \w = 0 & \text{on } \partial M \\
     \nu\lrcorner d\w = 0 & \text{on } \partial M,
\end{cases} 
\end{equation*}
thus $ k \Delta v = \Delta u = 0 $, contradiction. When $u=kv$ there are two cases: the first case is if $ k = 0 $, then $ u = 0 $, so $ \w_{|\partial M} = 0 $ and $ \w $ is then associated to $ \lambda_1 $, which implies $ \mu_{1,p} = \lambda_{1,p} $, hence {$0<({\ell_{1,p} q_{1,p}})^{-1} = 0 $}, a contradiction. The second case is if $ k \neq 0 $, then $ \Delta v = 0 $, so $ \Delta \w = 0 $, and hence $ \w = 0 $, a contradiction. Therefore, the inequality \eqref{eq:mu1<lamba1+(q1l1)PourLaPreuve} is strict. 
\end{proof}

The following is a generalization of \cite[Inequality IV]{Kuttler&Sigillito}.
\begin{thm}\label{thm:dernierInegaliteIntroPourLaPreuve}
    Let $(M^n, g)$ be a compact Riemannian manifold with smooth boundary $\partial M$. The following inequality holds between the eigenvalues of the Dirichlet, Neumann, Steklov, and BSD problems for differential forms:
    \begin{equation}\label{eq:mu1<lamba1+(q1sigma1)PourLaPreuve}
    \mu_{1,p}^{-1} < \lambda_{1,p}^{-1} + (q_{1,p} \sigma_{1,p})^{-1}.
    \end{equation}
\end{thm}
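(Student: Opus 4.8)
The plan is to obtain \eqref{eq:mu1<lamba1+(q1sigma1)PourLaPreuve} as a direct consequence of the two strict inequalities already established in this section, rather than by a fresh decomposition argument. Concretely, I would start from the inequality \eqref{eq:mu1<lamba1+(q1l1)PourLaPreuve}, namely $\mu_1^{-1} < \lambda_1^{-1} + (q_1\textit{l}_1)^{-\frac{1}{2}}$, and from the relation $q_1\sigma_1^2 < \textit{l}_1$ of \thref{thm:proofQSigma<l} (inequality \eqref{eq:q1sigma1<l1PourLaPreuve}). The entire argument then reduces to comparing the two ``error terms'' $(q_1\textit{l}_1)^{-\frac{1}{2}}$ and $(q_1\sigma_1)^{-1}$.

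First I would record that $q_1>0$ and $\sigma_1>0$, being the first positive eigenvalues of BSD and of the Steklov operator respectively, so that every quantity below is positive and the fractional powers are well defined. Starting from $q_1\sigma_1^2<\textit{l}_1$ and multiplying by $q_1>0$ gives
\[
(q_1\sigma_1)^2 = q_1^2\sigma_1^2 < q_1\textit{l}_1 .
\]
Since $t\mapsto t^{\frac{1}{2}}$ is increasing on $(0,\infty)$ this yields $q_1\sigma_1 < (q_1\textit{l}_1)^{\frac{1}{2}}$, and passing to inverses (again by monotonicity and positivity) produces the strict comparison of error terms
\[
(q_1\textit{l}_1)^{-\frac{1}{2}} < (q_1\sigma_1)^{-1}.
\]

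Chaining this with \eqref{eq:mu1<lamba1+(q1l1)PourLaPreuve} then gives
\[
\mu_1^{-1} < \lambda_1^{-1} + (q_1\textit{l}_1)^{-\frac{1}{2}} < \lambda_1^{-1} + (q_1\sigma_1)^{-1},
\]
which is exactly \eqref{eq:mu1<lamba1+(q1sigma1)PourLaPreuve}, the strictness being inherited from either of the two strict inequalities. There is essentially no analytic obstacle here: the only points requiring care are the positivity of $q_1$ and $\sigma_1$ and the correct direction of \eqref{eq:q1sigma1<l1PourLaPreuve}, on which the whole comparison hinges. As an alternative I could instead mimic the proof of \eqref{eq:mu1<lamba1+(q1l1)PourLaPreuve}, decomposing a test form $\w_1$ satisfying $\nu\lrcorner\w_1=0$, $\nu\lrcorner d\w_1=0$ and $\w_1\perp_{L^2(\partial M)}H_A^p(M)$ as $\w_1=u+v$ with $\Delta u=0$ and $v_{|\partial M}=0$, bounding $\lVert u\rVert_{L^2(M)}$ via $q_1$ and $\lVert v\rVert_{L^2(M)}$ via $\lambda_1$, and then controlling $\lVert\w_1\rVert_{L^2(\partial M)}$ through the Steklov characterization \eqref{eq:sigma1formesSteklov} in place of the BSN3 one; the delicate step there would be to reproduce the factor $(q_1\sigma_1)^{-1}$ exactly, which makes this route more laborious than the chaining argument above. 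I would therefore adopt the chaining argument.
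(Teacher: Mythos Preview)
Your proposal is correct and follows essentially the same approach as the paper: derive $q_1\sigma_1 < (q_1\textit{l}_1)^{1/2}$ from \eqref{eq:q1sigma1<l1PourLaPreuve} and chain it with \eqref{eq:mu1<lamba1+(q1l1)PourLaPreuve} to obtain the strict inequality. The only difference is cosmetic algebraic ordering.
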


\begin{proof}
   Using the inequality \eqref{eq:q1sigma1<l1PourLaPreuve}, we have 
\[
\sigma_{1,p} \leq \ell_{1,p}^{\frac{1}{2}} q_{1,p}^{-\frac{1}{2}}
\]
and by multiplying then by $ q_{1,p} $, we obtain 
\[
q_{1,p} \sigma_{1,p} \leq \ell_{1,p}^{\frac{1}{2}} q_{1,p}^{\frac{1}{2}}.
\]
Finally, by inserting the previous inequality into \eqref{eq:mu1<lamba1+(q1l1)PourLaPreuve}, we get
\[
\mu_{1,p}^{-1} \leq \lambda_{1,p}^{-1} + (q_{1,p} \sigma_{1,p})^{-1}.
\] Since the two inequalities \eqref{eq:q1sigma1<l1PourLaPreuve} and \eqref{eq:mu1<lamba1+(q1l1)PourLaPreuve} are strict, the inequality \eqref{eq:mu1<lamba1+(q1sigma1)PourLaPreuve} is also strict.

\end{proof}

\bibliographystyle{abbrv}
\bibliography{references}

\end{document}